\newtheorem{theorem}{Theorem}
\newtheorem{proposition}{Proposition}
\newtheorem{lemma}{Lemma}
\newtheorem{defn}{Definition}
\newtheorem{assum}{Assumption}
\journal{Springer}
\begin{document}
\begin{frontmatter}
\title{Weak Closed-loop Solvability of Linear Quadratic Stochastic Optimal Control Problems with Partial Information}

\author[address1]{Xun Li}\ead{li.xun@polyu.edu.hk}     
\author[address2]{Guangchen Wang}\ead{wguangchen@sdu.edu.cn}              
\author[address3,address4]{Jie Xiong}\ead{xiongj@sustech.edu.cn}
\author[address2]{Heng Zhang}\ead{zhangheng2828@mail.sdu.edu.cn,zhangh2828@163.com}  
\address[address1]{Department of Applied Mathematics, The Hong Kong Polytechnic University, Hong Kong, China}                                               
\address[address2]{School of Control Science and Engineering, Shandong University, Jinan 250061, China}             
\address[address3]{Department of Mathematics, Southern University of Science and Technology, Shenzhen 518055, China}        
\address[address4]{SUSTech International Center for Mathematics, Southern University of Science and Technology, Shenzhen 518055, China}  

%\author{Heng~Zhang}

%\address{School of Control Science and Engineering, Shandong University, Jinan 250061, PR China}
%\ead{zhangheng2828@mail.sdu.edu.cn, zhangh2828@163.com}
%\footnotetext{ {\it Corresponding author}: Heng Zhang}
%\footnotetext{$\dagger$ E-mail: wguangchen@sdu.edu.cn (Guangchen Wang), zhangheng2828@mail.sdu.edu.cn, zhangh2828@163.com (Heng Zhang).}
%\footnotetext{* Corresponding author.}
%\footnotetext{}

\begin{abstract}
This paper investigates a linear quadratic stochastic optimal control (LQSOC) problem with partial information.  Firstly, by introducing two Riccati equations and a backward stochastic differential equation (BSDE), we solve this LQSOC problem under standard positive semidefinite assumptions. Secondly, by means of a perturbation approach, we study open-loop solvability of this problem when the weighting matrices in the cost functional are indefinite. Thirdly,  we investigate weak closed-loop solvability of this problem and prove the equivalence  between open-loop and weak closed-loop solvabilities. Finally, we give an  example to illustrate the way for obtaining a weak closed-loop optimal strategy.
\end{abstract}
\begin{keyword}
 Open-loop solvability; Weak closed-loop solvability; Linear quadratic  stochastic optimal control; Partial information; Riccati equation.
\end{keyword}

\end{frontmatter}

\section{Introduction}
The linear quadratic deterministic optimal control problem, which is an important yet classical problem in control area, can be traced back to \cite{Kalman,Letov,Bellman}. The system dynamics in these works are ordinary differential equations (ODEs) and the corresponding system coefficients are deterministic functions. The LQSOC problem started from Wonham \cite{Wonham} and then many researchers studied various types of LQSOC problems. See, e.g., \cite{Bismut,Bensoussan}. In the classical LQSOC theory, it is always assumed that the weighting matrices in the cost functional satisfy some positive semidefinite assumptions (see \cite[Chapter 6]{YongZhou1999}). Under these standard assumptions, the LQSOC problem has a unique optimal control, which is established by virtue of a Riccati equation. Surprisingly, Chen et al. \cite{ChenZhou1998} found that some weighting matrices in the cost functional can be indefinite and in such cases the LQSOC problem is still well-defined. This opened up research on indefinite LQSOC problems. See also \cite{ChenZhou2000,AitRamiZhou2000,AitRamiChen2001,ChenYong2001} for some follow-up works on this subject. 

Meanwhile, motivated by the fact that we may not observe full information at any given time, the stochastic optimal control problem with partial information is of great interest for many researchers and engineers. For example, Baghery and Øksendal \cite{Baghery} proved a maximum principle for a stochastic control problem with jumps and partial information.  Huang et al. \cite{HuangWangXiong2009} obtained maximum principles for some partial information LQSOC problems governed by BSDEs. Meng \cite{Meng2009} gave a maximum principle for a partial information stochastic problem of a fully coupled forward-backward stochatic differential equation (FBSDE). Wang and Xiao \cite{WangXiao2015} studied partial information stochastic optimal control problems of a fully coupled FBSDE for both  finite-horizon and  infinite-horizon cases. They derived two sufficient conditions for optimality of these problems. For more theoretical results and practical applications of stochastic optimal control problems with partial information, see also book \cite{WangWuXiongBook2018} and its references.
 
In 2014, Sun and Yong \cite{SunYong2014_Game} considered the open-loop and closed-loop saddle points for  a linear quadratic stochatis zero-sum differential game. Then, Sun et al. \cite{SunLiYong2016_SLQ} further investigated the open-loop and closed-loop solvabilities of an LQSOC problem by means of the uniform convexity condition of the cost functional. They found that the closed-loop solvability of a finite-horizon LQSOC problem implies the open-loop solvability of this problem, but not vice versa. Thus, when some weighting matrices in the cost functional are indefinite, there exist some LQSOC problems that are not closed-loop solvable but open-loop solvable. For those problems, their open-loop optimal strategies may not admit state feedback representations. As an extension of the closed-loop solvability for LQSOC problems in \cite{SunLiYong2016_SLQ}, Wang et al. \cite{WangSunYong2019} introduced the notion of weak closed-loop solvability. They showed that the open-loop and weak closed-loop solvabilities of LQSOC problems are equivalent. Along this line, the open-loop, closed-loop and weak closed-loop solvabilities for mean-field LQSOC problems are studied in \cite{Sun_mean-field_open,LiSunYong_mean-field_closed,SunWang_weak_closed}, respectively. For LQSOC problems of Markovian regime
switching systems, Zhang et al. \cite{ZhangLiXiong2021} analyzed their open-loop and closed-loop solvabilities and Wen et al. \cite{WenLiXiong2021} considered their weak closed-loop solvabilities.

Motivated by the above literature, it is natural to ask: \textit{How are the  open-loop, closed-loop and weak closed-loop solvabilities defined for LQSOC problems with partial information? What is the relationship among these solvabilities?} The objective of this paper is to answer this question by mainly investigating open-loop and weak closed-loop solvabilities. The closed-loop solvability has been studied in a forthcoming paper.

Let us begin with $(\Omega,\mathcal{F},\mathbb{P})$ being a complete probability space, on which two independent standard one-dimensional Brownian motions $\{W_1(t), W_2(t)\}_{0\leq t<\infty}$ are defined. Let $\mathbb{F}=\{\mathcal{F}_t\}_{t\geq0}$ be the natural filtration of $\{W_1(\cdot), W_2(\cdot)\}$ augmented by all $\mathbb{P}-$null sets in $\mathcal{F}$. Let $\mathbb{G}=\{\mathcal{G}_t\}_{t\geq0}$ be the natural filtration of $ W_2(\cdot)$ augmented by all $\mathbb{P}-$null sets in $\mathcal{G}$. In this paper, $\mathcal{G}_t$ stands for the information we can observe at time $t$. That is,  at time $t$, we can only observe partial information of $\mathcal{F}_t$ .

Consider the following stochastic differential equation (SDE) on a finite horizon $[s,T]$
\begin{equation}
	\label{system}
	\begin{cases}
		\begin{split}
			dx(t)= \,\,&\big[A(t)x(t)+B(t)u(t)+b(t)\big]dt+\big[C_1(t)x(t)+D_1(t)u(t)+\sigma_1(t)\big]dW_1(t)\\
			&+\big[C_2(t)x(t)+D_2(t)u(t)+\sigma_2(t)\big]dW_2(t),\quad t\in[s,T],\\
		\end{split}\\
		x(s)=x_0,
	\end{cases}
\end{equation}
where $x(\cdot)\in\mathbb{R}^n$ represents the state, $u(\cdot)\in\mathbb{R}^m$ represents the control and $x_0\in\mathbb{R}^n$ is the initial state. The coefficients $A(\cdot)$, $B(\cdot)$, $C_1(\cdot)$, $C_2(\cdot)$, $D_1(\cdot)$, $D_2(\cdot)$ are deterministic functions of proper sizes and $b(\cdot)$, $\sigma_1(\cdot)$, $\sigma_2(\cdot)$ are $\mathbb{G}-$progressively measurable processes of proper sizes. Given any $s\in[0,T)$, the admissible control set of this paper is
\begin{equation}\begin{split}
		\mathcal{U}_\mathbb{G}[s,T]\triangleq \Big\{u:[s,T]\times\Omega\rightarrow\mathbb{R}^m\big|u(\cdot)\  \text{is}\ \mathbb{G}-\text{progressively measurable, and}\  \mathbb{E}\int_{s}^{T}|u(t)|^2dt<\infty\Big\}.
	\end{split}
\end{equation}

The cost functional considered in this paper is

\begin{equation}\label{cost}
	\begin{split}
		J(s,x_0;u(\cdot))\triangleq\ &\mathbb{E}\int_{s }^{T}\left[\left<\begin{pmatrix}
			Q(t) & S(t)^\top\\
			S(t) & R(t)\\
		\end{pmatrix}\begin{pmatrix}
			x(t)\\
			u(t)\\
		\end{pmatrix},\begin{pmatrix}
			x(t)\\
			u(t)\\
		\end{pmatrix}\right>+2\left<\begin{pmatrix}
			q(t)\\
			\rho(t)\\
		\end{pmatrix},\begin{pmatrix}
			x(t)\\
			u(t)\\
		\end{pmatrix}\right>\right]dt\\
		&+\mathbb{E}\left[\left<G x(T),x(T)\right> +2\left<g ,x(T)\right>\right],
	\end{split}
\end{equation}
where $Q(\cdot)$, $R(\cdot)$ are symmetric and deterministic functions of proper sizes, $S(\cdot)$ is a deterministic function of proper size, $q(\cdot)$, $\rho(\cdot)$ are $\mathbb{G}-$progressively measurable processes of proper sizes and $g$ is a $\mathcal{G}_T-$measurable random variable.

The LQSOC problem with partial information is as follows.

\noindent\textbf{Problem (P)} Given any $(s,x_0)\in[0,T)\times\mathbb{R}^n$, search for an optimal control $u^*(\cdot)\in\mathcal{U}_\mathbb{G}[s,T]$ such that 
\begin{equation}
	V(s,x_0)\triangleq J(s,x_0;u^*(\cdot))\leq J(s,x_0;u(\cdot)),\quad \forall u(\cdot)\in\mathcal{U}_\mathbb{G}[s,T].
\end{equation}

For simplicity, when $b(\cdot)$, $\sigma_1(\cdot)$, $\sigma_2(\cdot)$, $q(\cdot)$, $\rho(\cdot)$, $g=0$, we denote this problem by Problem (P)$^0$, the corresponding cost functional by $J^0(s,x_0;u(\cdot))$ and the corresponding value function by $V^0(s,x_0)$.

 Noteworthily, the fact that we can only observe partial information brings some difficulties to investigate Problem (P). Firstly, how to give the definitions of open-loop, closed-loop and weak closed-loop solvabilities  when we only know partial information $\mathbb{G}$?  Secondly, different from the full information case in \cite{SunLiYong2016_SLQ,WangSunYong2019},  what do the Riccati equation and the corresponding BSDE caused by the inhomogeneous terms look like? Thirdly, how to show the equivalence between the open-loop and weak closed-loop solvabilities of Problem (P)? Inspired by the methodologies in \cite{WangWuXiongBook2018,SunLiYong2016_SLQ,WangSunYong2019}, we overcome these difficulties. The main contributions of this paper are as follows.
\begin{itemize}
	\item By virtue of the filtering equation of the system dynamic, we give the definition of open-loop, closed-loop and weak closed-loop solvabilities for LQSOC problems with partial information. In these definitions, the control does not belong to the full information set $\mathbb{F}$, but  to the partial information set $\mathbb{G}$.
	\item With the help of two Riccati equations and a BSDE, we study the open-loop solvability of Problem (P) by analyzing the cost functional and by adopting a perturbation method similar to Wang et al. \cite{WangSunYong2019}, respectively.   The former method  show that the open-loop solvability can be obtained by introducing an FBSDE and imposing some conditions on the cost functional (see Theorem \ref{theorem_openLOOP_FBSDE}). The latter method characterize  the open-loop solvability by studying a sequence of perturbed versions of Problem (P) (see Theorem \ref{theorem_4}).
	\item Based on the perturbed versions of Problem (P), we prove that the open-loop and weak closed-loop solvabilities of Problem (P) are equivalent. This means that, if Problem (P) is open-loop solvable, then we can find a weak closed-loop strategy and its outcome is an open-loop optimal strategy. We also give an illustrative example to show the way for finding the optimal weak closed-loop strategy. Distinguished with the results in  \cite{SunLiYong2016_SLQ,WangSunYong2019}, we further develop their results to the case with partial information. This may be more in line with the practical application background and may be helpful for industrial practitioners.
\end{itemize}

This paper is arranged as follows. Section \ref{sec 1} presents some preliminaries.  Section \ref{sec 3} devotes to the open-loop solvability of Problem (P). Section \ref{sec 4} proves the equivalence between open-loop and weak closed-loop solvabilities. Section \ref{sec 5} gives an example to illustrate the theoretical results. Section \ref{sec 6} concludes this paper and collects some promising future works.

\section{Preliminaries}\label{sec 1}
In this paper, we denote respectively by $tr(N)$, $N^{-1}$ and $N^\dagger$ the trace,  the inverse and  the Moore-Penrose pseudoinverse of matrix $N$.  $\top$ denotes the transpose symbol of a matrix or a vector.  $\mathbb{R}^{n}$ represents the  Euclidean space of dimension $n$  and  $\mathbb{R}^{m\times n}$ is the space of all $m\times n$ constant matrices with its inner product being $\left< N,M\right>=tr(N^\top M)$ and its norm being $|N|=\sqrt{tr(N^\top N)}$.  We also use $\big<\cdot,\cdot\big>$ to denote inner products in other Hilbert spaces, if there is no ambiguity. $\mathbb{S}^n$ and $\mathbb{S}^n_+$ stand for the spaces of all $n\times n$ constant symmetric matrices and positive semidefinite matrices, respectively. $\mathbb{I}_m$ denotes the space of $m\times m$ identity matrices. For any matrices $N$, $M\in\mathbb{S}^m$, if $N-M$ is positive definite (respectively, positive semidefinite), we write $N>M$ (respectively, $N\geq M$). Given any $s\in[0,T)$ and Euclidean space $\mathbb{X}$, we define the following spaces:  
\begin{equation*}
	\begin{split}
		&C([s,T];\mathbb{X})\triangleq\left\{f:[s,T]\rightarrow\mathbb{X}\ \big|f\ \text{is continuous}\right\},\\
		&L^p([s,T];\mathbb{X})\triangleq\left\{f:[s,T]\rightarrow\mathbb{X}\ \big|f\ \text{is}\ p\text{th}\ (1\leq p<\infty)\  \text{power Lebesgue integrable}\right\},\\
		&L^\infty([s,T];\mathbb{X})\triangleq\left\{f:[s,T]\rightarrow\mathbb{X}\ \big|f\ \text{is essentially bounded measurable function}\right\},\\
		&L^2_{\mathcal{G}_T}(\Omega;\mathbb{X})\triangleq\left\{f:\Omega\rightarrow\mathbb{X}\ \big|f\ \text{is}\ \mathcal{G}_T-\text{measurable},\ \mathbb{E}|f|^2<\infty\right\},\\
		&L^2_{\mathbb{G}}([s,T];\mathbb{X})\triangleq\left\{f:[s,T]\times\Omega\rightarrow\mathbb{X}\ \big|f\ \text{is}\ \mathbb{G}-\text{progressively measurable,}\ \mathbb{E}\int_{s}^{T}|f(t)|^2dt<\infty\right\},\\
		&L^2_{\mathbb{G}}(\Omega;L^1([s,T];\mathbb{X}))\triangleq\left\{f:[s,T]\times\Omega\rightarrow\mathbb{X}\ \big|f\ \text{is}\ \mathbb{G}-\text{progressively measurable,}\ \mathbb{E}\left[\int_{s}^{T}|f(t)|dt\right]^2<\infty\right\},\\
		&L^2_{\mathbb{G}}(\Omega;C([s,T];\mathbb{X}))\triangleq\left\{f:[s,T]\times\Omega\rightarrow\mathbb{X}\ \big|f\ \text{is}\ \mathbb{G}-\text{adapted, continuous,}\ \mathbb{E}\left[\sup\limits_{s\leq t\leq T}|f(t)|^2\right]<\infty\right\}.\\
	\end{split}
\end{equation*}

Throughout this paper, we introduce the following assumptions:
\begin{assum}\label{assum_system}
	The parameters in system (\ref{system}) satisfy $A(\cdot), C_1(\cdot), C_2(\cdot)\in L^\infty(0,T;\mathbb{R}^{n\times n})$, $B(\cdot), D_1(\cdot), D_2(\cdot)\in L^\infty(0,T;\mathbb{R}^{n\times m})$,  $b(\cdot)\in L^2_{\mathbb{G}}(\Omega;L^1([0,T];\mathbb{R}^n))$, $\sigma_1(\cdot)\in L^2_{\mathbb{G}}([0,T];\mathbb{R}^n)$, $\sigma_2(\cdot)\in L^2_{\mathbb{G}}([0,T];\mathbb{R}^n)$.
\end{assum} 

\begin{assum}\label{assum_cost}
	The parameters in cost functional (\ref{cost}) satisfy $Q(\cdot)\in L^\infty(0,T;\mathbb{S}^{n})$, $R(\cdot)\in L^\infty(0,T;\mathbb{S}^{ m})$, $S(\cdot)\in L^\infty(0,T;\mathbb{R}^{m\times n})$, $q(\cdot)\in L^2_{\mathbb{G}}(\Omega;L^1([0,T];\mathbb{R}^n))$, $\rho(\cdot)\in L^2_{\mathbb{G}}([0,T];\mathbb{R}^m)$, $g\in L^2_{\mathcal{G}_T}(\Omega;\mathbb{R}^n)$, $G\in\mathbb{S}^n$.
\end{assum} 

The next lemma presents the well-posedness of system (\ref{system}). Since its proof is similar to \cite[Proposition 2.1]{SunYong2014_Game}, herein we omit it.

\begin{lemma}\label{lemma 1}
	Let Assumption \ref{assum_system} hold. Given any $(s,x_0)\in[0,T)\times \mathbb{R}^n$ and $u(\cdot)\in\mathcal{U}_\mathbb{G}[s,T]$, system (\ref{system}) admits a unique strong solution. Furthermore, there exists a real number $L>0$, which is independent of $u(\cdot)$ and $(s,x_0)$, such that 
	\begin{equation}
		\begin{split}
			\mathbb{E}\left[\sup\limits_{s\leq t\leq T}|x(t)|^2\right]\leq L\mathbb{E}\left[|x_0|^2+\left(\int_{s}^{T}|b(t)|dt\right)^2+\int_{t}^{T}\left(|\sigma_1(t)|^2+|\sigma_2(t)|^2\right)dt+\int_{t}^{T}|u(t)|^2dt\right].
		\end{split}
	\end{equation}
\end{lemma}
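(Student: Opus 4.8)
The plan is to establish the a priori estimate for the linear SDE \eqref{system} by the standard combination of It\^o's formula, the Burkholder--Davis--Gundy (BDG) inequality, and Gronwall's lemma, exactly as in \cite[Proposition 2.1]{SunYong2014_Game}. Existence and uniqueness of a strong solution follow from the classical theory of linear SDEs with bounded (measurable, deterministic) coefficients and $\mathbb{G}$-progressively measurable inhomogeneous terms $b(\cdot),\sigma_1(\cdot),\sigma_2(\cdot)$ lying in the spaces specified in Assumption \ref{assum_system}; since the drift and diffusion are globally Lipschitz in $(x,u)$ with linear growth and the data are square-integrable, the fixed-point/Picard iteration argument applies verbatim, so I would state this and refer to the standard reference rather than reproducing it.

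For the estimate, first I would fix $(s,x_0)$ and $u(\cdot)\in\mathcal{U}_\mathbb{G}[s,T]$, write $x(t)=x_0+\int_s^t[\cdots]\,dr+\int_s^t[\cdots]\,dW_1(r)+\int_s^t[\cdots]\,dW_2(r)$, and apply It\^o's formula to $|x(t)|^2$. Taking suprema over $[s,\tau]$ and then expectations, the martingale terms are handled by BDG: $\mathbb{E}\sup_{s\le t\le\tau}\big|\int_s^t 2\langle x(r),(C_i x+D_i u+\sigma_i)\,dW_i(r)\rangle\big|$ is bounded by $C\,\mathbb{E}\big(\int_s^\tau |x(r)|^2|C_i x(r)+D_i u(r)+\sigma_i(r)|^2\,dr\big)^{1/2}$, which after the elementary inequality $ab\le \varepsilon a^2+\varepsilon^{-1}b^2$ is absorbed as $\tfrac14\mathbb{E}\sup_{s\le t\le\tau}|x(t)|^2 + C\int_s^\tau \mathbb{E}\sup_{s\le r'\le r}|x(r')|^2\,dr + C\,\mathbb{E}\int_s^\tau(|u(r)|^2+|\sigma_1(r)|^2+|\sigma_2(r)|^2)\,dr$. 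The drift term similarly contributes $\int_s^\tau \mathbb{E}\sup|x|^2\,dr$ plus a term controlled using $\big(\int_s^T|b(r)|\,dr\big)^2$ via Cauchy--Schwarz on the $b$-part. Collecting terms, absorbing the $\tfrac14\mathbb{E}\sup|x|^2$ into the left side, and using boundedness of $A,B,C_i,D_i$ to make all constants uniform in $(s,x_0,u)$, I obtain $\mathbb{E}\sup_{s\le t\le\tau}|x(t)|^2 \le L\,\mathbb{E}\big[|x_0|^2+(\int_s^T|b|\,dt)^2+\int_s^T(|\sigma_1|^2+|\sigma_2|^2)\,dt+\int_s^T|u|^2\,dt\big] + L\int_s^\tau \mathbb{E}\sup_{s\le r'\le r}|x(r')|^2\,dr$, whence Gronwall's inequality applied to $\tau\mapsto\mathbb{E}\sup_{s\le t\le\tau}|x(t)|^2$ gives the claim with $\tau=T$, the Gronwall factor $e^{L(T-s)}$ being absorbed into a (possibly enlarged) constant $L$ depending only on $T$ and the $L^\infty$-bounds of the coefficients.

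The only mild obstacle is bookkeeping: one must first argue $\mathbb{E}\sup_{s\le t\le\tau}|x(t)|^2<\infty$ for each $\tau$ (so that the absorption step is legitimate), which can be done by a localization/stopping-time argument $\tau_N=\inf\{t:|x(t)|\ge N\}\wedge\tau$, proving the bound with $\tau_N$ in place of $\tau$, noting the right side is independent of $N$, and then letting $N\to\infty$ by monotone convergence; alternatively one invokes the a priori integrability already guaranteed by the existence theorem. Since all of this is entirely routine and identical in structure to the cited proof, I would simply remark that the argument of \cite[Proposition 2.1]{SunYong2014_Game} carries over once $b(\cdot)$ is handled in $L^2_{\mathbb{G}}(\Omega;L^1([0,T];\mathbb{R}^n))$ via the $\big(\int_s^T|b|\,dt\big)^2$ term rather than an $L^2$-in-time norm, and omit the details as stated in the excerpt.
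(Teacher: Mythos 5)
Your proposal is correct and follows exactly the route the paper intends: the paper itself omits the proof, remarking only that it is similar to \cite[Proposition 2.1]{SunYong2014_Game}, and your It\^o--BDG--Gronwall argument (with the localization step and the Cauchy--Schwarz handling of the $L^2_{\mathbb{G}}(\Omega;L^1)$-term $b(\cdot)$ via $\big(\int_s^T|b|\,dt\big)^2$) is precisely the standard argument being referenced. No discrepancies to report.
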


Further, we give the definitions of open-loop, closed-loop and weak closed-loop solvabilities for Problem (P). For notation convenience, we suppress the argument $t$ when there is no confusion.

\begin{defn}\label{defnition 1}
(i) 	Problem (P) is called (uniquely) open-loop sovlvable at $(s,x_0)\in[0,T)\times \mathbb{R}^n$ if there is a (unique) control $u^*(\cdot)\in\mathcal{U}_\mathbb{G}[s,T]$ such that
\begin{equation*}
	J(s,x_0,u^*(\cdot))\leq J(s,x_0;u(\cdot)),\quad \forall u(\cdot)\in\mathcal{U}_\mathbb{G}[s,T].
\end{equation*}
In this case, $u^*(\cdot)$ is said to be an open-loop optimal control.

(ii) Problem (P) is called (uniquely) open-loop sovlvable if it is (uniquely) open-loop sovlvable for any $(s,x_0)\in[0,T)\times \mathbb{R}^n$.
\end{defn}

\begin{defn}
	Suppose that $\varTheta: [s,T]\rightarrow\mathbb{R}^{m\times n}$ is  a deterministic function and $\varLambda:[s,T]\times\Omega\rightarrow\mathbb{R}^{m}$ is a $\mathbb{G}-$progressively measurable process. That is
	\begin{equation*}
		\int_{s}^{T}|\varTheta(t)|^2dt<\infty, \quad \mathbb{E}\int_{s}^{T}|\varLambda(t)|^2dt<\infty.
	\end{equation*}
	
(i) $(\varTheta(\cdot),\varLambda(\cdot))$ is called a closed-loop strategy on $[s,T]$ if $\varTheta(\cdot)\in L^2(s,T;\mathbb{R}^{m\times n})$ and $\varLambda(\cdot)\in L^2_\mathbb{G}(s,T;\mathbb{R}^{m})$. The collection of closed-loop strategies on $[s,T]$ is defined as  $\mathcal{Q}[s,T]$.

(ii)  A closed-loop strategy $(\varTheta^*(\cdot),\varLambda^*(\cdot))\in\mathcal{Q}[s,T]$ is called optimal on $[s,T]$ if 
\begin{equation*}
	J(s,x_0,\varTheta^*(\cdot)\widehat{x}^*(\cdot)+\varLambda^*(\cdot))\leq J(s,x_0;\varTheta(\cdot)\widehat{x}(\cdot)+\varLambda(\cdot)),\quad \forall x_0\in\mathbb{R}^n, \quad \forall (\varTheta(\cdot),\varLambda(\cdot))\in\mathcal{Q}[s,T],
\end{equation*}
where $\widehat{x}^*(\cdot)$ and $\widehat{x}(\cdot)$ evolve according to 
\begin{equation*}
	\begin{cases}
		\begin{split}
			d\widehat{x}^*(t)= \big[(A+B\varTheta^*)\widehat{x}^*(t)+B\varLambda^*+b\big]dt
			+\big[(C_2+D_2\varTheta^*)\widehat{x}^*(t)+D_2\varLambda^*+\sigma_2\big]dW_2(t),\quad t\in[s,T],
		\end{split}\\
		\widehat{x}^*(s)=x_0,\\
	\end{cases}
\end{equation*}
and 
\begin{equation*}
	\begin{cases}
		\begin{split}
			d\widehat{x}(t)= \big[(A+B\varTheta)\widehat{x}(t)+B\varLambda+b\big]dt
			+\big[(C_2+D_2\varTheta)\widehat{x}(t)+D_2\varLambda+\sigma_2\big]dW_2(t),\quad t\in[s,T],
		\end{split}\\
		\widehat{x}^*(s)=x_0.\\
	\end{cases}
\end{equation*}

(iii) Problem (P) is called (uniquely) closed-loop solvable if there (uniquely) exists a closed-loop strategy on $[s,T]$ for any $s\in[0,T)$.
\end{defn}

\begin{defn}\label{definition 3}
	Assume that $\varTheta: [s,T]\rightarrow\mathbb{R}^{m\times n}$ is a locally square-integrable deterministic function and $\varLambda:[s,T]\times\Omega\rightarrow\mathbb{R}^{m}$ is a locally square-integrable $\mathbb{G}-$progressively measurable process. That is, given any $T^{'}\in[s,T)$, we have
	\begin{equation*}
		\int_{s}^{T^{'}}|\varTheta(t)|^2dt<\infty, \quad \mathbb{E}\int_{s}^{T^{'}}|\varLambda(t)|^2dt<\infty.
	\end{equation*}

(i) $(\varTheta(\cdot),\varLambda(\cdot))$ is called a weak closed-loop strategy on $[s,T]$ if, given any $x_0\in\mathbb{R}^n$, the outcome $u(\cdot)=\varTheta(\cdot)\widehat{x}(\cdot)+\varLambda(\cdot)$ of $(\varTheta(\cdot),\varLambda(\cdot))$ belongs to $\mathcal{U}_\mathbb{G}[s,T]$, where $\widehat{x}(\cdot)$ evolves according to
\begin{equation}\label{eq6}
	\begin{cases}
		\begin{split}
			d\widehat{x}(t)= \big[(A+B\varTheta)\widehat{x}(t)+B\varLambda+b\big]dt
			+\big[(C_2+D_2\varTheta)\widehat{x}(t)+D_2\varLambda+\sigma_2\big]dW_2(t),\quad t\in[s,T],
		\end{split}\\
		\widehat{x}^*(s)=x_0.\\
	\end{cases}
\end{equation}
The collection of weak closed-loop strategies on $[s,T]$ is defined as  $\mathcal{Q}_w[s,T]$.

(ii)  A weak closed-loop strategy $(\varTheta^*(\cdot),\varLambda^*(\cdot))\in\mathcal{Q}[s,T]$ is called optimal on $[s,T]$ if 
\begin{equation*}
	J(s,x_0,\varTheta^*(\cdot)\widehat{x}^*(\cdot)+\varLambda^*(\cdot))\leq J(s,x_0;\varTheta(\cdot)\widehat{x}(\cdot)+\varLambda(\cdot)),\quad \forall x_0\in\mathbb{R}^n, \quad \forall (\varTheta(\cdot),\varLambda(\cdot))\in\mathcal{Q}_w[s,T],
\end{equation*}
where $\widehat{x}^*(\cdot)$ and $\widehat{x}(\cdot)$ are solutions of (\ref{eq6}) with $(\varTheta^*(\cdot),\varLambda^*(\cdot))$ and $(\varTheta(\cdot),\varLambda(\cdot))$, respectively. 

(iii) Problem (P) is called (uniquely) weak closed-loop solvable if there (uniquely) exists a weak closed-loop strategy on $[s,T]$ for any $s\in[0,T)$.
\end{defn}

In order to solve Problem (P), we introduce two Riccati equations and a backward stochastic differential equation (BSDE):
\begin{equation}\label{Riccati_1}
	\begin{split}
		\begin{cases}
			\dot{P}^1+A^\top P^1 +P^1A+C_1^\top P^1C_1+C_2^\top P^1C_2+Q=0,\\
			P^1(T)=G,
		\end{cases}
	\end{split}
\end{equation}

\begin{equation}\label{Riccati_2}
		\begin{cases}
				\begin{split}
			\dot{P}^2&+A^\top P^2 +P^2A+C_1^\top P^1C_1+C_2^\top P^2C_2\\
			&-(B^\top P^2+D_1^\top P^1C_1+D_2^\top P^2C_2+S)^\top(R+D_1^\top P^1D_1+D_2^\top P^2D_2)^\dagger\\
			&\times(B^\top P^2+D_1^\top P^1C_1+D_2^\top P^2C_2+S)+Q=0,\\	
		\end{split}\\
			P^2(T)=G,\\
		\end{cases}
\end{equation}

\begin{equation}\label{BSDE}
	\begin{cases}
		\begin{split}
			d\alpha(t)=&-\Big[\big(A-B(R+D_1^\top P^1D_1+D_2^\top P^2D_2)^\dagger(B^\top P^2+D_1^\top P^1C_1+D_2^\top P^2C_2+S)\big)^\top\alpha(t)\\
			&+\big(C_2-D_2(R+D_1^\top P^1D_1+D_2^\top P^2D_2)^\dagger(B^\top P^2+D_1^\top P^1C_1+D_2^\top P^2C_2+S)\big)^\top\beta(t)\\
			&+\big(C_1-D_1(R+D_1^\top P^1D_1+D_2^\top P^2D_2)^\dagger(B^\top P^2+D_1^\top P^1C_1+D_2^\top P^2C_2+S)\big)^\top P^1\sigma_1\\
			&+\big(C_2-D_2(R+D_1^\top P^1D_1+D_2^\top P^2D_2)^\dagger(B^\top P^2+D_1^\top P^1C_1+D_2^\top P^2C_2+S)\big)^\top P^2\sigma_2\\
			&-\big((R+D_1^\top P^1D_1+D_2^\top P^2D_2)^\dagger(B^\top P^2+D_1^\top P^1C_1+D_2^\top P^2C_2+S)\big)^\top\rho+P^2b+q\Big]dt\\
			&+\beta(t) dW_2(t),\\
		\end{split}\\
		\alpha(T)=g.\\
	\end{cases}
\end{equation}

Then, we present the next lemma to reveal some properties of Riccati equation (\ref{Riccati_1}).
\begin{lemma}\label{Riccati_1_Lemma}
	Suppose that Assumptions \ref{assum_system}-\ref{assum_cost} hold, then equation (\ref{Riccati_1}) admits a unique solution $P^1(\cdot)\in C([0,T];\mathbb{S}^n)$.  If, in addition, $G\geq 0$ and $Q(t)\geq 0$, $\forall t\in[0,T]$, then $P^1(t)\geq 0$, $\forall t\in[0,T]$.
\end{lemma}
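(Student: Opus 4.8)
The plan is to exploit the fact that (\ref{Riccati_1}) is not a genuine Riccati equation at all: it is a \emph{linear} (Lyapunov-type) terminal-value problem for $P^1$, with no quadratic term. Existence, uniqueness and regularity will then follow from linear ODE theory, and the semidefiniteness from a Feynman--Kac type representation of $P^1$ through an auxiliary linear SDE.

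First I would rewrite (\ref{Riccati_1}) as $\dot P^1 + \mathcal{L}(t)P^1 + Q(t) = 0$ on $[0,T]$ with $P^1(T)=G$, where $\mathcal{L}(t)\colon\mathbb{S}^n\to\mathbb{S}^n$ is the linear map $\mathcal{L}(t)P = A(t)^\top P + PA(t) + C_1(t)^\top P C_1(t) + C_2(t)^\top P C_2(t)$. Under Assumption~\ref{assum_system}, $t\mapsto\mathcal{L}(t)$ is essentially bounded on $[0,T]$, and by Assumption~\ref{assum_cost} we have $Q\in L^\infty(0,T;\mathbb{S}^n)\subset L^1(0,T;\mathbb{S}^n)$; hence the Carath\'eodory existence--uniqueness theorem for linear ODEs (equivalently, the variation-of-constants formula) provides a unique absolutely continuous — in particular continuous — solution $P^1$ on $[0,T]$. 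Since $\mathcal{L}(t)$ sends symmetric matrices to symmetric matrices and $G=G^\top$, the matrix $(P^1)^\top$ solves the same terminal-value problem, so uniqueness forces $P^1=(P^1)^\top$; thus $P^1\in C([0,T];\mathbb{S}^n)$.

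For the second assertion, I would fix $t\in[0,T)$ and $\xi\in\mathbb{R}^n$ and introduce the auxiliary linear SDE on $[t,T]$
\begin{equation*}
	dX(r) = A(r)X(r)\,dr + C_1(r)X(r)\,dW_1(r) + C_2(r)X(r)\,dW_2(r),\qquad X(t)=\xi,
\end{equation*}
which is just (\ref{system}) with $u$, $b$, $\sigma_1$, $\sigma_2$ all set to zero; by Lemma~\ref{lemma 1} its unique solution $X(\cdot)=X^{t,\xi}(\cdot)$ satisfies $\mathbb{E}\sup_{t\le r\le T}|X(r)|^2\le L|\xi|^2$. Applying It\^o's formula to $r\mapsto\langle P^1(r)X(r),X(r)\rangle$ and substituting the ODE for $P^1$, the $dr$-terms telescope to $-\langle Q(r)X(r),X(r)\rangle\,dr$; after a routine localization that uses the above $L^2$-bound to pass from local to true martingales, taking expectations over $[t,T]$ yields
\begin{equation*}
	\langle P^1(t)\xi,\xi\rangle = \mathbb{E}\big\langle G X(T),X(T)\big\rangle + \mathbb{E}\int_t^T\big\langle Q(r)X(r),X(r)\big\rangle\,dr.
\end{equation*}
If $G\ge0$ and $Q(r)\ge0$ for a.e.\ $r\in[0,T]$, both terms on the right are nonnegative, so $\langle P^1(t)\xi,\xi\rangle\ge0$; since $\xi\in\mathbb{R}^n$ is arbitrary, $P^1(t)\ge0$, and as $t\in[0,T)$ is arbitrary and $P^1(T)=G\ge0$, we conclude $P^1(t)\ge0$ for all $t\in[0,T]$.

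I do not expect a genuine obstacle here. The two points requiring a little attention are: (i) with merely $L^\infty$ coefficients $P^1$ is only absolutely continuous, which is nonetheless enough for the It\^o--Lyapunov computation above (with $\dot P^1$ read in the a.e.\ sense); and (ii) the stochastic integrals produced by It\^o's formula must be checked to be true — not merely local — martingales, which follows from the standard moment estimates for linear SDEs with bounded coefficients. As an alternative to the probabilistic route, one could instead derive $P^1\ge0$ directly from a Gronwall/comparison estimate on the linear ODE, but the representation displayed above is the most transparent and will be convenient to reuse later in the paper.
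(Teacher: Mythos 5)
Your proposal is correct, and it coincides with the paper's intended argument: the paper omits the proof entirely, deferring to the multidimensional analogue of \cite[Lemma 7.3]{YongZhou1999}, which is exactly the route you take — observing that (\ref{Riccati_1}) is a linear Lyapunov-type terminal-value problem (so existence, uniqueness and symmetry follow from linear ODE theory), and obtaining $P^1(t)\geq 0$ from the stochastic representation $\langle P^1(t)\xi,\xi\rangle=\mathbb{E}\langle GX(T),X(T)\rangle+\mathbb{E}\int_t^T\langle QX,X\rangle\,dr$ via It\^o's formula along the homogeneous state equation. Your two cautionary remarks (a.e.\ differentiability of $P^1$ and the local-to-true martingale step) are the right ones and are handled correctly.
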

\begin{proof}
	The proof is a multidimensional Brownian motion version of \cite[Lemma 7.3]{YongZhou1999}. So we omit it for space consideration.
\end{proof}

\section{Open-loop solvabilities}\label{sec 3}
\subsection{Positive semidefinite cases}
In order to compare with indefinite cases, this subsection solves Problem (P) under the following classical positive semidefinite assumptions:

\begin{assum}\label{assum_positive definite matrix}
	The weighting matrices in cost functional (\ref{cost}) satisfy $G\geq 0$, $Q(t)-S(t)^\top R(t)^{-1}S(t)\geq 0$ and $R(t)\geq\delta\mathbb{I}_m$, $\forall t\in[0,T]$, where $\delta>0$ is a constant.
\end{assum}

By virtue of Assumption \ref{assum_positive definite matrix}, we have the following lemma.

\begin{lemma}\label{Riccati_2_lemma}
	Let Assumptions \ref{assum_system}-\ref{assum_positive definite matrix} hold, then Riccati equation (\ref{Riccati_2}) admits a unique solution $P^2(\cdot)\in C([0,T];\mathbb{S}^n_+)$.
\end{lemma}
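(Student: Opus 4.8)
The plan is to establish existence and uniqueness of the solution to the second Riccati equation \eqref{Riccati_2} by a standard truncation/localization argument, combined with the a priori nonnegativity estimate that makes the argument global rather than merely local. First I would observe that, under Assumptions \ref{assum_system}--\ref{assum_positive definite matrix}, Lemma \ref{Riccati_1_Lemma} already guarantees $P^1(\cdot)\in C([0,T];\mathbb{S}^n)$ with $P^1(t)\ge 0$; hence the matrix $R+D_1^\top P^1D_1+D_2^\top P^2D_2$ that appears inverted in \eqref{Riccati_2} is, for any symmetric $P^2\ge 0$, bounded below by $R\ge\delta\mathbb{I}_m$, so its (pseudo)inverse coincides with the genuine inverse and is bounded by $\delta^{-1}$. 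This uniform invertibility is exactly what is needed to treat the quadratic term as a locally Lipschitz perturbation: on the convex set $\{P^2\in\mathbb{S}^n : P^2\ge 0\}$ the right-hand side of \eqref{Riccati_2}, viewed as $F(t,P^2)$, is continuous in $t$ and locally Lipschitz in $P^2$ (the map $P^2\mapsto(R+D_1^\top P^1D_1+D_2^\top P^2D_2)^{-1}$ is smooth where the argument is invertible, and all other terms are affine or bilinear with $L^\infty$ coefficients). By the Picard--Lindel\"of theorem this yields a unique local solution on some maximal subinterval $(\tau,T]$.

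Next I would promote the local solution to a global one on $[0,T]$ by deriving an a priori bound that precludes blow-up. The natural route is to exhibit $P^2$ as the value-function matrix of an auxiliary LQ problem: define, for the controlled linear SDE with only the $W_2$-diffusion retained in the feedback structure, the cost with weights $Q,S,R,G$ and show by completing the square (as in \cite[Chapter 6]{YongZhou1999}) that any solution $P^2(\cdot)$ of \eqref{Riccati_2} represents $V^0$ in feedback form; Assumption \ref{assum_positive definite matrix} (namely $G\ge0$ and $Q-S^\top R^{-1}S\ge0$, $R\ge\delta\mathbb{I}_m$) forces $P^2(t)\ge 0$ along the existence interval. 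Combined with a matching upper bound $P^2(t)\le \Phi(t)$ for some explicit $\Phi\in C([0,T];\mathbb{S}^n_+)$ obtained by comparing with the linear equation in which the quadratic term is dropped (it is nonnegative-definite subtracted, so dropping it gives a comparison supersolution), one gets $0\le P^2(t)\le\Phi(t)$ uniformly, so $\|P^2(t)\|$ cannot escape to infinity as $t\downarrow\tau$. The standard continuation argument then extends the solution to all of $[0,T]$, and it stays in $\mathbb{S}^n_+$ throughout, giving $P^2(\cdot)\in C([0,T];\mathbb{S}^n_+)$. Uniqueness on $[0,T]$ follows from the local uniqueness plus a connectedness argument (the set where two solutions agree is open, closed, and nonempty).

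The main obstacle I anticipate is the global-existence step: making rigorous that the a priori nonnegativity and the upper comparison bound hold along the \emph{maximal} existence interval, before global existence is known. One must argue carefully that on any closed subinterval $[\tau',T]\subset(\tau,T]$ the representation/comparison estimates are valid with constants independent of $\tau'$, so that letting $\tau'\downarrow\tau$ shows $P^2$ remains bounded and bounded below by $0$, contradicting maximality unless $\tau=0$. A secondary technical point is verifying that the pseudoinverse $(\cdot)^\dagger$ in \eqref{Riccati_2} may legitimately be replaced by the ordinary inverse throughout the argument — this is immediate once $P^1\ge0$ and $R\ge\delta\mathbb{I}_m$ are in hand, but it should be stated explicitly since the local Lipschitz property relies on the smooth inverse rather than the merely measurable pseudoinverse. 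Since the authors explicitly flag this as "a standard result" in spirit (cf. the proof of Lemma \ref{Riccati_1_Lemma}), I would expect the write-up to cite \cite[Chapter 6]{YongZhou1999} or \cite[Proposition 2.1]{SunYong2014_Game}-type arguments and compress the routine parts.
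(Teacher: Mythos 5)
Your proposal is correct and matches the paper's route: the paper simply combines Lemma \ref{Riccati_1_Lemma} (giving $P^1(\cdot)\ge 0$, so that $R+D_1^\top P^1D_1+D_2^\top P^2D_2\ge R\ge\delta\mathbb{I}_m$ and the pseudoinverse is a genuine inverse) with the standard existence result \cite[Theorem 7.2]{YongZhou1999}, whose internal proof is exactly the local existence, a priori nonnegativity/comparison bounds, and continuation argument you outline. You correctly anticipated that the write-up would compress all of this into that citation.
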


\begin{proof}
	By combining Lemma \ref{Riccati_1_Lemma} with \cite[Theorem 7.2]{YongZhou1999}, the statements of this lemma is obtained. This completes the proof.
\end{proof}

Based on the above results, now we give the optimal control of Problem (P) under Assumption \ref{assum_positive definite matrix}.
\begin{theorem}\label{optimal proof}
	Let Assumptions \ref{assum_system}-\ref{assum_positive definite matrix} hold. The optimal control of Problem (P) is 
	\begin{equation}\label{optimal control_definite}
	\begin{split}
	u^*(t)=\varTheta^*(t)\widehat{x}^*(t)+\varLambda^*(t),
	\end{split}
	\end{equation}
	where 
	\begin{equation*}
	\begin{split}
	\varTheta^*\triangleq&-(R+D_1^\top P^1D_1+D_2^\top P^2D_2)^{-1}(B^\top P^2+D_1^\top P^1C_1+D_2^\top P^2C_2+S),\\
	\varLambda^*\triangleq&-(R+D_1^\top P^1D_1+D_2^\top P^2D_2)^{-1}(B^\top\alpha+D_2^\top\beta+D_1^\top P^1\sigma_1+D_2^\top P^2\sigma_2+\rho),\\
	\end{split}
	\end{equation*} $P^1(\cdot)$, $P^2(\cdot)$ are solutions to Riccati equations (\ref{Riccati_1}) and (\ref{Riccati_2}), $(\alpha(\cdot),\beta(\cdot))$ is the adapted solution of BSDE (\ref{BSDE}) and $\widehat{x}(\cdot)$ evolves according to
	\begin{equation*}
	\label{optimal filter}
	\begin{cases}
	\begin{split}
	d\widehat{x}^*(t)= \big[A\widehat{x}^*(t)+Bu^*(t)+b\big]dt
	+\big[C_2\widehat{x}^*(t)+D_2u^*(t)+\sigma_2\big]dW_2(t),\quad t\in[s,T],
	\end{split}\\
	\widehat{x}^*(s)=x_0.\\
	\end{cases}
	\end{equation*}
\end{theorem}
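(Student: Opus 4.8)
\textbf{Proof proposal for Theorem \ref{optimal proof}.}

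The plan is to apply a completion-of-squares argument directly to the cost functional, but first we must reduce Problem (P) to a problem driven only by $W_2(\cdot)$ via the filtering equation, since the control is $\mathbb{G}$-progressively measurable. First I would introduce the conditional expectation $\widehat{x}(t)=\mathbb{E}[x(t)\mid\mathcal{G}_t]$ and derive its dynamics: because $A,B,C_1,C_2,D_1,D_2$ are deterministic, $u(\cdot)$ is $\mathbb{G}$-adapted, and $b,\sigma_1,\sigma_2$ are $\mathbb{G}$-progressively measurable, the term $C_1 x\,dW_1$ disappears under $\mathbb{E}[\cdot\mid\mathcal{G}_t]$ (the $W_1$-innovation contributes nothing to the $\mathbb{G}$-filtration), so $\widehat{x}(\cdot)$ satisfies exactly the SDE written in the statement. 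Next I would show that the cost functional depends on $x(\cdot)$ only through $\widehat{x}(\cdot)$: using the tower property and the orthogonal decomposition $x=\widehat{x}+(x-\widehat{x})$, the cross terms vanish and we may replace $\mathbb{E}\langle Qx,x\rangle$ by $\mathbb{E}\langle Q\widehat{x},\widehat{x}\rangle + \mathbb{E}\langle Q(x-\widehat{x}),x-\widehat{x}\rangle$; the second piece is independent of $u(\cdot)$ once we observe that the equation for $x-\widehat{x}$ has no control dependence in its homogeneous part interacting with $u$ — more carefully, $\mathbb{E}\langle Q(x-\widehat{x}),x-\widehat{x}\rangle$ can be written as a deterministic functional of the data plus terms not involving $u$, so it can be absorbed into a constant. (Here one must be a little careful: the variance process $\mathbb{E}[(x-\widehat x)(x-\widehat x)^\top]$ does satisfy a linear matrix ODE whose coefficients do not involve $u$, which is the precise justification.) This reduces Problem (P) to a standard full-information LQ problem for $\widehat{x}(\cdot)$ over the filtration $\mathbb{G}$, but now with weighting matrices shifted by the $C_1^\top P^1 C_1$-type terms coming from the eliminated $W_1$-noise.

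The second stage is the completion of squares. I would apply Itô's formula to $\langle P^2(t)\widehat{x}(t),\widehat{x}(t)\rangle + 2\langle\alpha(t),\widehat{x}(t)\rangle$ along the dynamics of $\widehat{x}(\cdot)$, integrate from $s$ to $T$, take expectations, and use the terminal conditions $P^2(T)=G$, $\alpha(T)=g$. After substituting the Riccati equation (\ref{Riccati_2}) and the BSDE (\ref{BSDE}), the drift terms should reorganize so that
\begin{equation*}
	J(s,x_0;u(\cdot)) = \langle P^2(s)x_0,x_0\rangle + 2\langle\alpha(s),x_0\rangle + \mathbb{E}\int_s^T \big\langle \Sigma(t)\big(u(t)-\varTheta^*(t)\widehat{x}(t)-\varLambda^*(t)\big),\, u(t)-\varTheta^*(t)\widehat{x}(t)-\varLambda^*(t)\big\rangle dt + \text{const},
\end{equation*}
where $\Sigma\triangleq R+D_1^\top P^1 D_1 + D_2^\top P^2 D_2$ and ``const'' collects the $u$-independent terms (including the filtering-variance contribution and the $\langle P^1\sigma_1,\sigma_1\rangle$-type integrals coming from the $W_1$-elimination). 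Since $R\geq\delta\mathbb{I}_m$, $P^1\geq 0$ (Lemma \ref{Riccati_1_Lemma}), and $P^2\geq 0$ (Lemma \ref{Riccati_2_lemma}), we have $\Sigma\geq\delta\mathbb{I}_m>0$, so $\Sigma^\dagger=\Sigma^{-1}$ and the pseudoinverses in the definitions of $\varTheta^*,\varLambda^*$ are genuine inverses; the integrand is nonnegative and vanishes iff $u(t)=\varTheta^*(t)\widehat{x}(t)+\varLambda^*(t)$ a.e., a.s. Choosing this $u^*$, the associated $\widehat{x}^*$ solves the stated closed-loop SDE (well-posed by Lemma \ref{lemma 1} since $\varTheta^*\in L^\infty$), $u^*\in\mathcal{U}_\mathbb{G}[s,T]$, and $J(s,x_0;u^*)=\langle P^2(s)x_0,x_0\rangle+2\langle\alpha(s),x_0\rangle+\text{const}$ is the minimum, giving both optimality and the value function.

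The step I expect to be the main obstacle is the bookkeeping in the completion of squares — specifically, verifying that the particular combination of terms appearing in the BSDE (\ref{BSDE}) for $(\alpha,\beta)$ is exactly what is needed to cancel the inhomogeneous cross terms $2\langle q,\widehat{x}\rangle$, $2\langle\rho,u\rangle$, and the $P^1\sigma_1$, $P^2\sigma_2$, $P^2 b$ contributions after the quadratic part has been diagonalized. This requires carefully tracking which $\sigma_1$-terms survive the conditioning (they enter through $C_1^\top P^1\sigma_1$ because the $W_1$-diffusion term $C_1 x + D_1 u + \sigma_1$ is squared against $P^1$ in the Itô correction before $\widehat x$ is formed) versus which are pushed into the constant, and checking the matching of the generator of (\ref{BSDE}) coefficient-by-coefficient against the first-order terms produced by Itô's formula. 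A secondary technical point is justifying the interchange of conditional expectation and the SDE (the rigorous version of the filtering reduction), which I would cite from \cite{WangWuXiongBook2018} rather than reprove.
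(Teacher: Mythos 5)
There is a genuine gap in your first stage, and it is not a bookkeeping issue. You claim that $\mathbb{E}\langle Q(x-\widehat{x}),x-\widehat{x}\rangle$ can be pushed into a control-independent constant because ``the variance process $\mathbb{E}[(x-\widehat x)(x-\widehat x)^\top]$ satisfies a linear matrix ODE whose coefficients do not involve $u$.'' That is false here. Writing $e=x-\widehat{x}$, its dynamics are
\begin{equation*}
de(t)=Ae(t)\,dt+\big[C_1x(t)+D_1u(t)+\sigma_1(t)\big]dW_1(t)+C_2e(t)\,dW_2(t),
\end{equation*}
so the error covariance obeys $\dot\Pi=A\Pi+\Pi A^\top+C_2\Pi C_2^\top+\mathbb{E}\big[(C_1x+D_1u+\sigma_1)(C_1x+D_1u+\sigma_1)^\top\big]$, whose last term depends on $u$ whenever $D_1\neq0$ (and on the full state whenever $C_1\neq0$). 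The classical separation argument you are invoking works for additive observation noise, not for multiplicative noise in the control. Indeed, if the error contribution really were control-independent, the optimal gain would contain neither $D_1^\top P^1D_1$ nor $D_1^\top P^1C_1$ — contradicting the very formulas for $\varTheta^*$ and $\Sigma$ you are trying to prove. Your second stage compounds this: completing squares only on $\langle P^2\widehat{x},\widehat{x}\rangle+2\langle\alpha,\widehat{x}\rangle$ cannot generate the $W_1$-channel Itô correction $(C_1x+D_1u+\sigma_1)^\top P^1(C_1x+D_1u+\sigma_1)$ at all, since $\widehat{x}$ has no $W_1$-martingale part.

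The paper's proof avoids this by never asserting control-independence of the error term. It applies It\^o's formula to \emph{three} quantities: $x^\top P^1x$ with the full state $x$ (so that the Lyapunov-type equation (\ref{Riccati_1}) for $P^1$ absorbs the quadratic-in-$x$ terms from both noise channels and produces exactly the $D_1^\top P^1D_1$ and $D_1^\top P^1C_1$ contributions), $\widehat{x}^\top(P^2-P^1)\widehat{x}$ with the filter (where the genuine Riccati equation (\ref{Riccati_2}) for $P^2$ does the optimization), and $x^\top\alpha$ for the inhomogeneous terms. The decomposition $\mathbb{E}[x^\top P^1x]=\mathbb{E}[e^\top P^1e]+\mathbb{E}[\widehat{x}^\top P^1\widehat{x}]$ is implicit, and the key mechanism you are missing is that the $u$-dependent cross terms produced by the $P^1$ computation, such as $\mathbb{E}[u^\top D_1^\top P^1C_1x]$, reduce to $\mathbb{E}[u^\top D_1^\top P^1C_1\widehat{x}]$ precisely because $u$ is $\mathbb{G}$-adapted; this is what lets the completion of squares close in terms of $\widehat{x}$ while still carrying the $P^1$-terms. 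Your intuition that the $W_1$-diffusion ``is squared against $P^1$'' is correct, but the route you propose to get there does not produce it; you would need to replace your stage one with the $x^\top P^1x$ plus $\widehat{x}^\top(P^2-P^1)\widehat{x}$ splitting (or equivalently track $\mathbb{E}[e^\top P^1 e]$ dynamically rather than discarding it as a constant).
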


\begin{proof}
	Under Assumptions \ref{assum_system}-\ref{assum_positive definite matrix}, it follows from Lemmas \ref{Riccati_1_Lemma}-\ref{Riccati_2_lemma} that
	\begin{equation*}
	(R+D_1^\top P^1D_1+D_2^\top P^2D_2)^\dagger=(R+D_1^\top P^1D_1+D_2^\top P^2D_2)^{-1}
	\end{equation*}
	Thus, the pseudoinverse symbols in equations (\ref{Riccati_2}) and (\ref{BSDE})  can be written as inverse symbols.
	
	Applying It\^o's formula to $d\big(x^\top(t) P^1 x(t)\big)$, $d\big(\widehat{x}^\top(t) (P^2(t)-P^1(t)) \widehat{x}(t)\big)$ and $d\big(x(t)^\top\alpha(t)^\top\big)$, where $\widehat{x}(\cdot)$ is the state of
	\begin{equation*}
	\begin{cases}
	\begin{split}
	d\widehat{x}(t)= \big[A\widehat{x}(t)+Bu(t)+b\big]dt
	+\big[C_2\widehat{x}(t)+D_2u(t)+\sigma_2\big]dW_2(t),\quad t\in[s,T],
	\end{split}\\
	\widehat{x}(s)=x_0,\\
	\end{cases}
	\end{equation*}
	with any $u(\cdot)\in\mathcal{U}_\mathbb{G}[s,T]$, then we  obtain
	\begin{equation}\label{eq14}
	\begin{split}
	&\mathbb{E}\big[x(T)^\top Gx(T)-x_0^\top P^1(s)x_0\big]\\
	=\ &\mathbb{E}\int_{s}^{T}\Big\{[Ax+Bu+b]^\top P^1x+x^\top \dot{P}^1 x
	+x^\top P^1 [Ax+Bu+b]\\
	&+[C_1x+D_1u+\sigma_1]^\top P^1[C_1x+D_1u+\sigma_1]
	+[C_2x+D_2u+\sigma_2]^\top P^1[C_2x+D_2u+\sigma_2]\Big\}dt,\\
	\end{split}
	\end{equation}
	\begin{equation}\label{eq15}
	\begin{split}
	&-\mathbb{E}\big[x_0^\top (P^2(s)-P^1(s))x_0\big]\\
	=\ &\mathbb{E}\int_{s}^{T}\Big\{[A\widehat{x}+Bu+b]^\top (P^2-P^1)\widehat{x}+\widehat{x}^\top (\dot{P}^2-\dot{P}^1) \widehat{x}
	+\widehat{x}^\top (P^2-P^1) [A\widehat{x}+Bu+b]\\
	&+[C_2\widehat{x}+D_2u+\sigma_2]^\top (P^2-P^1)[C_2\widehat{x}+D_2u+\sigma_2]\Big\}dt,\\
	\end{split}
	\end{equation}
	and
	\begin{equation}\label{eq16}
	\begin{split}
	&\mathbb{E}\big[x(T)^\top g-x_0^\top \alpha(s)\big]\\
	=\ &\mathbb{E}\int_{s}^{T}\Big\{[Ax+Bu+b]^\top \alpha-x^\top\big[(A+B\varTheta^*)^\top\alpha+(C_2+D_2\varTheta^*)^\top\beta+(C_1+D_1\varTheta^*)^\top P^1\sigma_1\\
	&+(C_2+D_2\varTheta^*)^\top P^2\sigma_2+\varTheta^*\rho+P^2b+q\big]
	+[C_2x+D_2u+\sigma_2]^\top \beta\Big\}dt.\\
	\end{split}
	\end{equation}
	
	Combining (\ref{eq14}), (\ref{eq15}), (\ref{eq16}) with the definition of cost functional (\ref{cost}), we have
	\begin{equation*}
	\begin{split}
	J(s,x_0;u(\cdot))=\ &\mathbb{E}\int_{s}^{T}\big\{x^\top Qx+u^\top Ru+2x^\top S^\top u+2x^\top q+2u^\top\rho\big\}dt+\mathbb{E}\big[x(T)^\top Gx(T)+2x(T)^\top g\big]\\
	=\ &\mathbb{E}\int_{s}^{T}\Big\{x^\top Qx+u^\top Ru+2x^\top S^\top u+2x^\top q+2u^\top\rho+[Ax+Bu+b]^\top P^1x+x^\top \dot{P}^1 x\\
	&+x^\top P^1 [Ax+Bu+b]
	+[C_1x+D_1u+\sigma_1]^\top P^1[C_1x+D_1u+\sigma_1]\\
	&+[C_2x+D_2u+\sigma_2]^\top P^1[C_2x+D_2u+\sigma_2]+[A\widehat{x}+Bu+b]^\top (P^2-P^1)\widehat{x}\\
	&+\widehat{x}^\top (\dot{P}^2-\dot{P}^1) \widehat{x}
	+\widehat{x}^\top (P^2-P^1) [A\widehat{x}+Bu+b]\\
	&+[C_2\widehat{x}+D_2u+\sigma_2]^\top (P^2-P^1)[C_2\widehat{x}+D_2u+\sigma_2]+2[Ax+Bu+b]^\top \alpha\\
	&-2x^\top\big[(A+B\varTheta^*)^\top\alpha+(C_2+D_2\varTheta^*)^\top\beta+(C_1+D_1\varTheta^*)^\top P^1\sigma_1\\
	&+(C_2+D_2\varTheta^*)^\top P^2\sigma_2+\varTheta^*\rho+P^2b+q\big]
	+2[C_2x+D_2u+\sigma_2]^\top \beta
	\Big\}dt\\
	&+\mathbb{E}\big[x_0^\top P^2(s)x_0+2x_0^\top \alpha\big]\\
	=\ &\mathbb{E}\int_{s}^{T}\Big\{\big[u-\varTheta^* \widehat{x}-\varLambda^*\big]^\top(R+D_1^\top P^1D_1+D_2^\top P^2D_2)\big[u-\varTheta^* \widehat{x}-\varLambda^*\big]\\
	&+\sigma_1^\top P^1\sigma_1+\sigma_2^\top P^2\sigma_2+2\sigma_2^\top\beta+2b^\top\alpha-(\varLambda^*)^\top(R+D_1^\top P^1D_1+D_2^\top P^2D_2)\varLambda^*\Big\}dt\\
	&+\mathbb{E}\big[x_0^\top P^2(s)x_0+2x_0^\top \alpha\big].\\
	\end{split}
	\end{equation*}
	The above equation shows that the optimal control is (\ref{optimal control_definite}) and thus the proof is completed.
\end{proof}

\subsection{Indefinite cases}
In the sequel, we do not impose any nonnegativeness/ positive-semidefiniteness conditions on the weighting matrices in cost functional (\ref{cost}). First, we show the equivalence between the open-loop solvability and the corresponding FBSDE.

\begin{theorem}\label{theorem_openLOOP_FBSDE}
	Assume that Assumptions \ref{assum_system}, \ref{assum_cost} hold. Given any $(s, x_0)\in[0,T)\times\mathbb{R}^n$ and $\lambda\in\mathbb{R}^1$,  then we have
	
	(i)  For any $u(\cdot)$, $\nu(\cdot)\in\mathcal{U}_\mathbb{G}[s,T]$, the cost functional satisfies
	\begin{equation*}
		\begin{split}
			J(s,x_0;u(\cdot)+\lambda \nu(\cdot))=J(s,x_0;u(\cdot))+\lambda^2J^0(s,0;\nu(\cdot
			))+\lambda\mathbb{E}\int_{s}^{T}\nu(t)^\top \mathcal{D}J(s,x_0;u(t))(t)dt,
		\end{split}
	\end{equation*}
	where
	\begin{equation}\label{eq17}
		\begin{split}
			\mathcal{D}J(s,x_0;u(\cdot))(t)=2\Big[B(t)^\top Y(t)+D_1(t)^\top Z_1(t)+D_2(t)^\top Z_2(t)+S(t)x(t)+R(t)u(t)+\rho(t)\Big],
		\end{split}
	\end{equation}
	and $x(\cdot)$, $Y(\cdot)$, $Z(\cdot)$ are solutions of the following FBSDE
	\begin{equation}\label{eq18}
		\begin{cases}
			\begin{split}
				dx(t)= \,\,&\big[A(t)x(t)+B(t)u(t)+b(t)\big]dt+\big[C_1(t)x(t)+D_1(t)u(t)+\sigma_1(t)\big]dW_1(t)\\
				&+\big[C_2(t)x(t)+D_2(t)u(t)+\sigma_2(t)\big]dW_2(t),\quad t\in[s,T],\\
			\end{split}\\
			\begin{split}
				dY(t)= \,\,&-\big[A(t)^\top Y(t)+C_1(t)^\top Z_1(t)+C_2(t)^\top Z_2(t)+Q(t)x(t)+S(t)^\top u(t)+q(t)\big]dt\\
				&+Z_1(t)dW_1(t)+Z_2(t)dW_2(t),\quad t\in[s,T],\\
			\end{split}\\
			x(s)=0, \quad Y(T)=Gx(T)+g.\\
		\end{cases}
	\end{equation}

(ii) A control $u(\cdot)$ is an open-loop optimal strategy of Problem (P) if any only if $J^0(s,0;\nu(\cdot))\geq 0, \forall \nu(\cdot)\in\mathcal{U}_\mathbb{G}[s,T]$ and $\mathcal{D}J(s,x_0;u(\cdot))(t)=0, \forall t\in [s,T]$
\end{theorem}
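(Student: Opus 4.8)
The plan for part (i) is a standard second-variation computation for the quadratic functional $J$, combined with a duality (adjoint) argument. Since the state equation (\ref{system}) is affine in the control, the trajectory driven by $u(\cdot)+\lambda\nu(\cdot)$ equals $x(\cdot)+\lambda\xi(\cdot)$, where $x(\cdot)$ is the trajectory under $u(\cdot)$ (with initial value $x_0$ and the given $b,\sigma_1,\sigma_2$) and $\xi(\cdot)=x^{0,\nu}(\cdot)$ is the trajectory under $\nu(\cdot)$ with zero initial value and $b=\sigma_1=\sigma_2=0$, i.e. the forward part of (\ref{eq18}). Substituting into (\ref{cost}) and grouping by powers of $\lambda$ gives $J(s,x_0;u(\cdot))$ at order $\lambda^0$, exactly $J^0(s,0;\nu(\cdot))$ at order $\lambda^2$, and at order $\lambda$ the cross term
\[
 2\,\mathbb{E}\int_s^T\big[\langle\xi,\,Qx+S^\top u+q\rangle+\langle\nu,\,Sx+Ru+\rho\rangle\big]dt+2\,\mathbb{E}\langle\xi(T),\,Gx(T)+g\rangle .
\]
To rewrite the $\xi$-part I would apply It\^o's formula to $t\mapsto\langle Y(t),\xi(t)\rangle$ using the BSDE in (\ref{eq18}) and the equation for $\xi$, then take expectations over $[s,T]$: the symmetric pairs $\langle A^\top Y,\xi\rangle$ versus $\langle Y,A\xi\rangle$ and $\langle C_i^\top Z_i,\xi\rangle$ versus $\langle Z_i,C_i\xi\rangle$ cancel, the lower boundary term vanishes because $\xi(s)=0$, and one is left with $\mathbb{E}\langle Gx(T)+g,\xi(T)\rangle+\mathbb{E}\int_s^T\langle Qx+S^\top u+q,\xi\rangle\,dt=\mathbb{E}\int_s^T\langle B^\top Y+D_1^\top Z_1+D_2^\top Z_2,\nu\rangle\,dt$. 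Substituting this identity into the cross term collapses it to $\mathbb{E}\int_s^T\nu^\top\cdot 2[B^\top Y+D_1^\top Z_1+D_2^\top Z_2+Sx+Ru+\rho]\,dt$, which is the asserted formula with $\mathcal{D}J$ as in (\ref{eq17}); well-posedness of the decoupled FBSDE (\ref{eq18}) in $L^2$ under Assumptions \ref{assum_system}--\ref{assum_cost}, together with Lemma \ref{lemma 1}, keeps all the expectations finite.

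For part (ii), sufficiency is immediate from (i) with $\lambda=1$: for $v(\cdot)\in\mathcal{U}_\mathbb{G}[s,T]$ set $\nu=v-u\in\mathcal{U}_\mathbb{G}[s,T]$ (the admissible set is a linear space), so $J(s,x_0;v)=J(s,x_0;u)+J^0(s,0;\nu)+\mathbb{E}\int_s^T\nu^\top\mathcal{D}J(s,x_0;u)\,dt$; the last term vanishes by the stationarity hypothesis and the middle term is $\ge0$, hence $u(\cdot)$ is optimal. Conversely, if $u(\cdot)$ is optimal, then for every $\nu\in\mathcal{U}_\mathbb{G}[s,T]$ and every $\lambda\in\mathbb{R}$ the quantity $\lambda^2J^0(s,0;\nu)+\lambda\,\mathbb{E}\int_s^T\nu^\top\mathcal{D}J(s,x_0;u)\,dt$ is $\ge0$; a real quadratic in $\lambda$ nonnegative on all of $\mathbb{R}$ has vanishing linear coefficient and nonnegative leading coefficient, so $J^0(s,0;\nu)\ge0$ and $\mathbb{E}\int_s^T\nu^\top\mathcal{D}J(s,x_0;u)\,dt=0$ for every admissible $\nu$. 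The first is one of the claimed conditions; for the second, since $\nu$ ranges over all $\mathbb{G}$-progressively measurable square-integrable processes, choosing $\nu(t)=\mathbb{E}[\mathcal{D}J(s,x_0;u)(t)\mid\mathcal{G}_t]$ forces $\mathbb{E}[\mathcal{D}J(s,x_0;u)(t)\mid\mathcal{G}_t]=0$ for a.e. $t$, $\mathbb{P}$-a.s., which is the correct reading of the displayed stationarity condition in the partial-information setting and is also all that the sufficiency step actually used.

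I expect the main obstacle to be precisely this last point: reconciling the full-information adjoint system (\ref{eq18}), whose solution $(Y,Z_1,Z_2)$ and therefore $\mathcal{D}J$ genuinely depend on $W_1$, with the restriction that controls and their perturbations live in the sub-filtration $\mathbb{G}$, so that the Euler--Lagrange condition is the projected equation $\mathbb{E}[\mathcal{D}J(s,x_0;u)(\cdot)\mid\mathcal{G}_\cdot]=0$ rather than pointwise vanishing of $\mathcal{D}J$. The remaining ingredients --- affine dependence of the state on the control, the exact $\lambda$-expansion of the quadratic cost, and the adjoint cancellations via It\^o's formula --- are routine, the only care needed being to keep every quadratic and cross term integrable, which is handled by Lemma \ref{lemma 1} and the standard $L^2$-bounds for the BSDE in (\ref{eq18}).
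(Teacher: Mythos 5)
Your proposal is correct and follows essentially the same route as the paper: the paper also sets $\widetilde{x}=(x^\lambda-x)/\lambda$ (your $\xi$), applies It\^o's formula to $\langle Y(t),\widetilde{x}(t)\rangle$ to obtain the duality identity, and substitutes into the expansion of the quadratic cost; part (ii) is dispatched there in one line as a ``direct result of (i),'' whereas you supply the standard nonnegative-quadratic-in-$\lambda$ argument explicitly. One substantive point in your favour: your observation that, because the perturbations $\nu$ live in the sub-filtration $\mathbb{G}$ while $(Y,Z_1,Z_2)$ and hence $\mathcal{D}J$ are genuinely $\mathbb{F}$-adapted, the Euler--Lagrange condition obtainable from optimality is the projected identity $\mathbb{E}[\mathcal{D}J(s,x_0;u(\cdot))(t)\mid\mathcal{G}_t]=0$ rather than pointwise vanishing of $\mathcal{D}J$, is the correct reading in this partial-information setting; the paper states the condition as $\mathcal{D}J(s,x_0;u(\cdot))(t)=0$ without comment, and your version is both what the necessity argument actually yields and all that the sufficiency step uses.
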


\begin{proof}
	(i) For any $u(\cdot)$, $\nu(\cdot)\in\mathcal{U}_\mathbb{G}[s,T]$, let $x(\cdot)$ and $x^\lambda(\cdot)$ be the solution of system (\ref{system}) with control being $u(\cdot)$ and $u(\cdot)+\lambda\nu(\cdot)$, respectively. Then we know $\widetilde{x}\triangleq(x^\lambda-x)/\lambda$ satisfies
	\begin{equation}
		\begin{cases}
			\begin{split}
				d\widetilde{x}(t)= \,\,&\big[A\widetilde{x}(t)+B\nu(t)\big]dt+\big[C_1\widetilde{x}(t)+D_1\nu(t)\big]dW_1(t)\\
				&+\big[C_2\widetilde{x}(t)+D_2\nu(t)\big]dW_2(t),\quad t\in[s,T],\\
			\end{split}\\
			\widetilde{x}(s)=0.
		\end{cases}
	\end{equation}
	
	Applying It\^o's formula to $d(\widetilde{x}(t)^\top Y(t))$, we have
	\begin{equation}\label{eq20}
		\begin{split}
			&\mathbb{E}\big[\widetilde{x}^\top(T)(Gx(T)+g)\big]\\
			=&\mathbb{E}\int_{s}^{T}\Big\{[A\widetilde{x}+B\nu]^\top Y-\widetilde{x}^\top\big[A^\top Y+C_1^\top Z_1+C_2^\top Z_2+Qx+S^\top u+q\big]\\
			&+[C_1\widetilde{x}+D_1\nu]^\top Z_1+[C_2\widetilde{x}+D_2\nu]^\top Z_2\Big\}dt\\
			=&\mathbb{E}\int_{s}^{T}\Big\{\nu^\top B^\top Y+\widetilde{x}^\top\big[-Qx-S^\top u-q\big]+\nu^\top D_1^\top Z_1+\nu^\top D_2^\top Z_2\Big\}dt.
		\end{split}
	\end{equation}

	Based on (\ref{eq20}), it follows from cost functional (\ref{cost}) that 
	\begin{equation*}
		\begin{split}
			&J(s,x_0;u(\cdot)+\lambda \nu(\cdot))-J(s,x_0;u(\cdot))\\
			=\ &\lambda^2 J^0(s,0;\nu(\cdot))
			+2\lambda\mathbb{E}\big[\widetilde{x}^\top(T)(Gx(T)+g)\big]\\
			&+2\lambda\mathbb{E}\int_{s}^{T}\Big\{u^\top S\widetilde{x}+u^\top R \nu+\widetilde{x}^\top q+\nu^\top\rho+\widetilde{x}^\top Qx+\nu^\top Sx\Big\} dt\\
			=\ &\lambda^2 J^0(s,0;\nu(\cdot))
			+2\lambda\mathbb{E}\int_{s}^{T}\Big\{\nu^\top\big[B^\top Y+D_1^\top Z_1+D_2^\top Z_2+Ru+Sx+\rho\big]\Big\} dt.\\
		\end{split}
	\end{equation*}

(ii) It follows the definition of open-loop optimal control that $u(\cdot)$ is an open-loop optimal control if and only if
\begin{equation}
	\begin{split}
		J(s,x_0;u(\cdot)+\lambda \nu(\cdot))-J(s,x_0;u(\cdot))\geq 0, \quad\forall \nu(\cdot)\in\mathcal{U}_\mathbb{G}[s,T].
	\end{split}
\end{equation}
Thus, the statements in (ii) are direct results of (i). This completes the proof.
\end{proof}

%\begin{rem}
%	The statements (ii) in Theorem \ref{theorem_openLOOP_FBSDE} is obtained in the sense of (i). Indeed, if (\ref{eq17}) and FBSDE (\ref{eq18}) become 
%	\begin{equation*}
%		\begin{split}
%			\mathcal{D}J(s,x_0;u(\cdot))(t)=2\Big[B(t)^\top Y(t)+D_2(t)^\top Z(t)+S(t)x(t)+R(t)u(t)+\rho(t)\Big],
%		\end{split}
%	\end{equation*}
%and 
%\begin{equation*}
%	\begin{cases}
%		\begin{split}
%			dx(t)= \,\,&\big[A(t)x(t)+B(t)u(t)+b(t)\big]dt+\big[C_1(t)x(t)+D_1(t)u(t)+\sigma_1(t)\big]dW_1(t)\\
%			&+\big[C_2(t)x(t)+D_2(t)u(t)+\sigma_2(t)\big]dW_2(t),\quad t\in[s,T],\\
%		\end{split}\\
%		\begin{split}
%			dY(t)= \,\,&-\big[A(t)^\top Y(t)+C_2(t)^\top Z(t)+Q(t)x(t)+S(t)^\top u(t)+q(t)\big]dt\\
%			&+Z(t)dW_2(t),\quad t\in[s,T],\\
%		\end{split}\\
%		x(s)=0, \quad Y(T)=Gx(T)+g,\\
%	\end{cases}
%\end{equation*}
%then the statements in Theorem \ref{theorem_openLOOP_FBSDE} still hold.
%\end{rem}

To further investigate Problem (P), we introdce the following assumptions:

\begin{assum}\label{assum > gamma u}
	There exists a real number $\gamma>0$ such that 
	\begin{equation}\label{eq23}
		J^0(0,0;u(\cdot))\geq\gamma\mathbb{E}\int_{0}^{T}|u(t)|^2dt, \quad \forall u(\cdot)\in\mathcal{U}_\mathbb{G}[0,T].
	\end{equation} 
\end{assum}

\begin{assum}\label{ass_cost>0}
	$J^0(0,0;u(\cdot))\geq 0, \quad\forall u(\cdot)\in\mathcal{U}_\mathbb{G}[s,T].$
\end{assum}
Assumption \ref{assum > gamma u} is a uniformly convexity assumption for cost functional (\ref{cost}). By virtue of Theorem \ref{theorem_openLOOP_FBSDE}, Assumption \ref{ass_cost>0} is a necessary condition for the open-loop solvability of Problem (P). Similar assumptions also appears in some works studying indefinite LQSOC problems with full information, say, e.g., \cite{SunLiYong2016_SLQ,SunYong2014_Game,ZhangLiXiong2021,WenLiXiong2021}.

\begin{proposition}\label{proposition_1}
	Suppose that Assumptions \ref{assum_system}, \ref{assum_cost} and \ref{assum > gamma u} hold, then Problem (P) is uniquely open-loop solvable. Moreover, there exists a constant $\theta>0$ such that 
	\begin{equation}\label{eq22}
		V^0(s,x_0)\geq \theta|x_0|^2,\quad\forall(s,x_0)\in[0,T)\times\mathbb{R}^n.
	\end{equation}
\end{proposition}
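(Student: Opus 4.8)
The plan is to treat the two assertions in turn: the unique open-loop solvability follows from recognizing the cost as a uniformly convex quadratic functional on a Hilbert space, and the estimate $V^0(s,x_0)\ge\theta|x_0|^2$ follows from a quadratic representation of $V^0$ in $x_0$ together with a positivity argument that is uniform in $s$. The first ingredient I would establish is that Assumption~\ref{assum > gamma u} propagates to an arbitrary initial time: given $u(\cdot)\in\mathcal U_\mathbb G[s,T]$, extend it by $0$ on $[0,s)$; since all inhomogeneous data vanish in Problem (P)$^0$, Lemma~\ref{lemma 1} forces the state driven from $x(0)=0$ by this extension to vanish identically on $[0,s]$, so $J^0(0,0;\,\cdot\,)$ evaluated on the extension equals $J^0(s,0;u)$ while the $L^2$-norms coincide; hence $J^0(s,0;u)\ge\gamma\,\mathbb E\!\int_s^T|u(t)|^2dt$ for all $u(\cdot)\in\mathcal U_\mathbb G[s,T]$. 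Taking $u\equiv0$ and $\lambda=1$ in Theorem~\ref{theorem_openLOOP_FBSDE}(i) then exhibits $J(s,x_0;\,\cdot\,)$, as a functional on the Hilbert space $\mathcal U_\mathbb G[s,T]$, as the constant $J(s,x_0;0)$, plus the bounded linear functional $u\mapsto\mathbb E\!\int_s^T u^\top\mathcal D J(s,x_0;0)\,dt$ (bounded since $\mathcal D J(s,x_0;0)(\cdot)\in\mathcal U_\mathbb G[s,T]$ by Lemma~\ref{lemma 1}), plus the quadratic form $u\mapsto J^0(s,0;u)$, which is bounded by Lemma~\ref{lemma 1} and the $L^\infty$-bounds of Assumption~\ref{assum_cost} and coercive with constant $\gamma$ by the previous step. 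Completing the square against this form (equivalently, Lax--Milgram) yields a unique $u^*(\cdot)$ with $J(s,x_0;u)=J(s,x_0;u^*)+J^0(s,0;u-u^*)\ge J(s,x_0;u^*)+\gamma\|u-u^*\|^2$; as $(s,x_0)$ is arbitrary, Problem (P) is uniquely open-loop solvable.

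For the estimate I would run the same bookkeeping for $J^0(s,x_0;\,\cdot\,)$ with a free initial state, splitting the trajectory into the zero-control trajectory started from $x_0$ plus the zero-initial-state trajectory driven by $u$, which gives
\[
J^0(s,x_0;u)=J^0(s,0;u)+2\langle\mathcal B(s)x_0,u\rangle+\langle\mathcal M(s)x_0,x_0\rangle,
\]
with $\mathcal B(s)\colon\mathbb R^n\to\mathcal U_\mathbb G[s,T]$ bounded and $\mathcal M(s)\in\mathbb S^n$, both depending continuously on $s$ through the flow of the system (and of the FBSDE of Theorem~\ref{theorem_openLOOP_FBSDE}). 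Minimizing over $u$, once more by completing the square against the $\gamma$-coercive form $J^0(s,0;\,\cdot\,)$, produces a symmetric, continuous $\Pi(\cdot)$ on $[0,T)$ (with $\Pi(T)=G$) such that $V^0(s,x_0)=\langle\Pi(s)x_0,x_0\rangle$ for every $(s,x_0)$. The claim then reduces to showing $\Pi(s)\ge\theta\,\mathbb I_n$ uniformly in $s\in[0,T)$.

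This last step is the one I expect to be the genuine obstacle: Assumption~\ref{assum > gamma u} controls the control variable directly, and one must transfer this into positivity in the $x_0$-direction through the coupling between the dynamics and the weighting matrices. The route I would attempt is contradiction and compactness — if the uniform bound failed there would be $s_k\to\bar s\in[0,T]$ and unit vectors $x_0^k\to\bar x_0$ with $\langle\Pi(s_k)x_0^k,x_0^k\rangle\to0$, and continuity of $\Pi$ would force $\langle\Pi(\bar s)\bar x_0,\bar x_0\rangle=0$, i.e. an optimal homogeneous pair with unit initial datum and vanishing cost; one then has to invoke the uniform convexity (e.g. after rewriting $J^0$ via the adapted solution of the FBSDE and the optimality condition $\mathcal D J^0(\bar s,\bar x_0;u^*)=0$ of Theorem~\ref{theorem_openLOOP_FBSDE}) to derive that such a pair must still pay a strictly positive amount. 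An equivalent and perhaps cleaner route is to identify $\Pi(\cdot)$ as the solution of a Riccati equation whose running and terminal data encode the strict positivity and to read the bound off by a comparison argument. Making either route quantitative and uniform in $s$ is the crux; everything else reduces to the Hilbert-space convex analysis above together with the a priori estimate of Lemma~\ref{lemma 1}.
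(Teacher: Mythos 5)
Your argument for the unique open-loop solvability is correct and essentially the paper's: the zero-extension step giving $J^0(s,0;u(\cdot))\ge\gamma\,\mathbb{E}\int_s^T|u(t)|^2dt$ is exactly (\ref{eq25}), and the expansion of $J(s,x_0;\cdot)$ around $u=0$ via Theorem \ref{theorem_openLOOP_FBSDE}(i) followed by Young's inequality is exactly (\ref{eq26}). Where the paper then invokes a minimizing sequence and weak compactness, you invoke Lax--Milgram/completion of the square for a $\gamma$-coercive quadratic functional on a Hilbert space; these are interchangeable standard arguments, so this half is fine.

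For the estimate (\ref{eq22}), however, you stop short: you reduce the claim to a uniform bound $\Pi(s)\ge\theta\,\mathbb{I}_n$ for the matrix in the representation $V^0(s,x_0)=\langle\Pi(s)x_0,x_0\rangle$ and then explicitly leave that step as ``the genuine obstacle,'' sketching two possible routes without carrying either out. That is a genuine gap, and it is also a detour: the paper reads (\ref{eq22}) directly off the inequality you already derived, namely (\ref{eq26}) specialized to homogeneous data, which gives $V^0(s,x_0)\ge J^0(s,x_0;0)-\frac{1}{2\gamma}\mathbb{E}\int_s^T|\mathcal{D}J^0(s,x_0;0)(t)|^2dt$; since the processes entering the right-hand side are the homogeneous flow and adjoint started from $x_0$ with $u=0$, the right-hand side is a quadratic form in $x_0$ with coefficients bounded uniformly in $s\in[0,T]$, which is the whole proof. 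Be aware, though, that this argument only produces some $\theta\in\mathbb{R}$, not $\theta>0$, and indeed strict positivity fails in general: with $G=0$, $Q=0$, $S=0$, $R=\mathbb{I}_m$, Assumption \ref{assum > gamma u} holds with $\gamma=1$ yet $V^0\equiv0$. So your suspicion that the positivity is the hard point is well founded --- it is not obtainable, and the bound actually used downstream (e.g.\ $\widetilde{P}^2(t)\ge\theta\,\mathbb{I}_n$ in Lemma \ref{lemma 4}(ii)) only needs $\theta$ real. With that reading, the second assertion follows in one line from your own coercivity inequality, and neither the representation $V^0(s,x_0)=\langle\Pi(s)x_0,x_0\rangle$ nor a Riccati comparison or compactness-contradiction argument is required.
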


\begin{proof}
For any $s\in[0,T)$ and $u(\cdot)\in\mathcal{U}_\mathbb{G}[s,T]$, we define
\begin{equation}
		\big[0\mathbb{I}_{[0,s]}\oplus u(\cdot)\big](t)=\begin{cases}
			0,\quad\quad \,t\in[0,s],\\
			u(t), \quad t\in[s,T].\\
		\end{cases}
\end{equation}
Thus, Assumption \ref{assum > gamma u} yields
\begin{equation}\label{eq25}
	\begin{split}
		J^0(s,0;u(\cdot))=J^0(0,0;0\mathbb{I}_{[0,s]}\oplus u(\cdot))\geq \gamma\mathbb{E}\int_{0}^{T}\big|[0\mathbb{I}_{[0,s]}\oplus u(\cdot)](t)\big|^2dt=\gamma\mathbb{E}\int_{s}^{T}|u(t)|^2dt.
	\end{split}
\end{equation}
According to Theorem \ref{theorem_openLOOP_FBSDE} and (\ref{eq25}), given any $u(\cdot)\in\mathcal{U}_\mathbb{G}[s,T]$, we obtain
\begin{equation}\label{eq26}
	\begin{split}
		&J(s,x_0;u(\cdot))=J(s,x_0;0)+J^0(s,0;u(\cdot
		))+\mathbb{E}\int_{s}^{T}u(t)^\top \mathcal{D}J(s,x_0;0)(t)dt\\
		\geq\ &J(s,x_0;0)+J^0(s,0;u(\cdot
		))-\frac{\gamma}{2}\mathbb{E}\int_{s}^{T}|u(t)|^2dt-\frac{1}{2\gamma}\mathbb{E}\int_{s}^{T}|\mathcal{D}J(s,x_0;0)(t)|^2dt\\
		\geq\ &J(s,x_0;0)+\frac{\gamma}{2}\mathbb{E}\int_{s}^{T}|u(t)|^2dt-\frac{1}{2\gamma}\mathbb{E}\int_{s}^{T}|\mathcal{D}J(s,x_0;0)(t)|^2dt.\\
	\end{split}
\end{equation} 
As a result, based on a standard argument involving minizing sequence and locally weak compactness of Hilbert spaces, Problem (P) has a unique open-loop optimal control for any $(s,x_0)\in[0,T)\times\mathbb{R}^n$. 

Further, when $b(\cdot)$, $\sigma_1(\cdot)$, $\sigma_2(\cdot)$, $q(\cdot)$, $\rho(\cdot)$, $g=0$, it follows from (\ref{eq26}) that
\begin{equation}\label{eq27}
	V^0(s,x_0)\geq J^0(s,x_0;0)-\frac{1}{2\gamma}\mathbb{E}\int_{s}^{T}|\mathcal{D}J^0(s,x_0;0)(t)|^2dt.
\end{equation}
Since the functions on the right-hand side of (\ref{eq27}) are continuous in $t$ and quadratic in $x_0$, we obtain (\ref{eq22}). The proof is then completed. 
\end{proof}

Next, we investigate Riccati equation (\ref{Riccati_2}) under Assumption \ref{assum > gamma u}.

\begin{theorem}\label{theorem_Riccati_assum4}
	Let Assumptions \ref{assum_system}, \ref{assum_cost} and \ref{assum > gamma u} hold. Then Riccati equation (\ref{Riccati_2}) admits a unique solution $P^2(\cdot)\in C([0,T];\mathbb{S}^n)$ such that 
	\begin{equation}\label{eq288}
		R(t)+D_1(t)^\top P^1(t)D_1(t)+D_2(t)^\top P^2(t)D_2(t)\geq \gamma\mathbb{I}_m,\quad a.e. \,\,t\in[0,T],
	\end{equation}
for some $\gamma>0$. In consequence, Problem (P) is uniquely closed-loop solvable and the optimal closed-loop strategy is  
	\begin{equation*}
		\begin{split}
			\varTheta^*\triangleq&-(R+D_1^\top P^1D_1+D_2^\top P^2D_2)^{-1}(B^\top P^2+D_1^\top P^1C_1+D_2^\top P^2C_2+S),\\
			\varLambda^*\triangleq&-(R+D_1^\top P^1D_1+D_2^\top P^2D_2)^{-1}(B^\top\alpha+D_2^\top\beta+D_1^\top P^1\sigma_1+D_2^\top P^2\sigma_2+\rho),\\
		\end{split}
	\end{equation*}
	where $(\alpha(\cdot),\beta(\cdot))$ is the adapted solution of BSDE (\ref{BSDE}). Moreover, Problem (P) is uniquely open-loop solvable and the open-loop optimal control for initial pair $(s,x_0)$ is 
	\begin{equation*}
		\begin{split}
			u^*(t)=\varTheta^*(t)\widehat{x}^*(t)+\varLambda^*(t),
		\end{split}
	\end{equation*}
	where $\widehat{x}^*(\cdot)$ is the solution to (\ref{eq6}) with $(\varTheta^*(\cdot),\varLambda^*(\cdot))$. 
\end{theorem}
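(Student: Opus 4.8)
The approach has three stages: first solve Riccati equation (\ref{Riccati_2}) together with the uniform positivity (\ref{eq288}); then deduce closed-loop solvability by completing the square exactly as in the proof of Theorem \ref{optimal proof}; finally read off the open-loop optimal control as the outcome of the optimal closed-loop strategy. For the first (and hardest) stage I would run a continuation argument connecting Problem (P)$^0$ to a positive-definite problem handled by Lemma \ref{Riccati_2_lemma}. For $\lambda\in[0,1]$ replace $(Q,S,R,G)$ by $(Q_\lambda,S_\lambda,R_\lambda,G_\lambda):=(1-\lambda)(I_n,0,I_m,I_n)+\lambda(Q,S,R,G)$, keep the other data of Problem (P)$^0$, and write (\ref{Riccati_1})$_\lambda$, (\ref{Riccati_2})$_\lambda$ for the corresponding equations, with $P^1_\lambda$ (existing, uniformly bounded, by Lemma \ref{Riccati_1_Lemma}) and the unknown $P^2_\lambda$. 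Since $J^0_\lambda(0,0;u)=(1-\lambda)\,\mathbb{E}\big[\int_0^T(|x|^2+|u|^2)dt+|x(T)|^2\big]+\lambda J^0(0,0;u)\ge\gamma_0\,\mathbb{E}\int_0^T|u|^2dt$ with $\gamma_0:=\min\{1,\gamma\}>0$, Proposition \ref{proposition_1} applies to every Problem (P)$^0_\lambda$ with the \emph{same} constants, so $V^0_\lambda(s,x_0)\ge\theta_0|x_0|^2$ for a $\theta_0>0$ independent of $\lambda$. Put
\[
\Lambda:=\Big\{\lambda\in[0,1]:\ \text{(\ref{Riccati_2})}_\lambda\ \text{has a solution}\ P^2_\lambda\in C([0,T];\mathbb{S}^n)\ \text{with}\ \mathcal R_\lambda:=R_\lambda+D_1^\top P^1_\lambda D_1+D_2^\top P^2_\lambda D_2\ge\gamma_0 I_m\ \text{a.e.}\Big\};
\]
since $(I_n,0,I_m,I_n)$ satisfies Assumption \ref{assum_positive definite matrix}, Lemma \ref{Riccati_2_lemma} gives $P^2_0\in C([0,T];\mathbb{S}^n_+)$, hence $\mathcal R_0\ge I_m$ and $0\in\Lambda$; the goal is $\Lambda=[0,1]$.

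On (and near) $\Lambda$ the matrix $\mathcal R_\lambda$ is boundedly invertible, so the pseudoinverse in (\ref{Riccati_2})$_\lambda$ is a genuine inverse and the computation in the proof of Theorem \ref{optimal proof}, run with homogeneous data (whence $\alpha\equiv0$, $\varLambda^*_\lambda\equiv0$), gives for all $u\in\mathcal U_\mathbb{G}[s,T]$
\[
J^0_\lambda(s,x_0;u)=\mathbb{E}\int_s^T\big(u-\varTheta^*_\lambda\widehat x\big)^\top\mathcal R_\lambda\big(u-\varTheta^*_\lambda\widehat x\big)\,dt+x_0^\top P^2_\lambda(s)x_0,
\]
where $\varTheta^*_\lambda=-\mathcal R_\lambda^{-1}(B^\top P^2_\lambda+D_1^\top P^1_\lambda C_1+D_2^\top P^2_\lambda C_2+S_\lambda)$ and $\widehat x$ is the filter of the state. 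Thus $V^0_\lambda(s,x_0)=x_0^\top P^2_\lambda(s)x_0$, so $\theta_0 I_n\le P^2_\lambda(s)\le P^1_\lambda(s)\le CI_n$ on $[0,T]$ (the middle bound from $u=0$), which with (\ref{Riccati_2})$_\lambda$ and $\|\mathcal R_\lambda^{-1}\|\le\gamma_0^{-1}$ makes $\{P^2_\lambda\}_{\lambda\in\Lambda}$ equi-Lipschitz. \emph{Closedness} of $\Lambda$ then follows from Arzelà--Ascoli and passage to the limit in the integral form of (\ref{Riccati_2})$_{\lambda_k}$ (using $\mathcal R_{\lambda_k}\ge\gamma_0 I_m$, so the inverses converge), together with uniqueness (two solutions satisfying (\ref{eq288}) both represent $V^0_\lambda(s,\cdot)$, or argue by Gronwall). \emph{Openness} in $[0,1]$: for $\lambda_0\in\Lambda$, continuous dependence of the backward ODE (\ref{Riccati_2})$_\lambda$ on $\lambda$ (its right-hand side is locally Lipschitz in $P$ where $\mathcal R$ is boundedly invertible) yields, for $\lambda$ near $\lambda_0$, a solution $P^2_\lambda$ with $\mathcal R_\lambda\ge\tfrac{\gamma_0}{2}I_m$, so the displayed identity holds; to upgrade $\tfrac{\gamma_0}{2}$ to $\gamma_0$, fix a Lebesgue point $t_0$ of $\mathcal R_\lambda$ and a unit vector $\xi$ and feed in, from $0$ at time $t_0$, the outcome $u_h$ of the closed-loop strategy $(\varTheta^*_\lambda,\varLambda_h)$ with $\varLambda_h=\xi$ on $[t_0,t_0+h]$ and $\varLambda_h=0$ afterwards; then $u_h-\varTheta^*_\lambda\widehat x_h=\varLambda_h$, so the identity gives $J^0_\lambda(t_0,0;u_h)=h\,\xi^\top\mathcal R_\lambda(t_0)\xi+o(h)$, while a direct estimate of the closed-loop filter gives $\mathbb{E}\int_{t_0}^T|u_h|^2dt=h(1+c_h)+o(h)$ with $c_h\ge0$, and the uniform convexity $J^0_\lambda(t_0,0;u_h)\ge\gamma_0\,\mathbb{E}\int_{t_0}^T|u_h|^2dt$ forces $\xi^\top\mathcal R_\lambda(t_0)\xi\ge\gamma_0$ on letting $h\downarrow0$; hence $\mathcal R_\lambda\ge\gamma_0 I_m$ a.e.\ and $\lambda\in\Lambda$. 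Connectedness of $[0,1]$ gives $1\in\Lambda$, i.e.\ (\ref{Riccati_2}) has a unique solution $P^2\in C([0,T];\mathbb{S}^n)$ obeying (\ref{eq288}).

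Stages two and three are then short. With (\ref{eq288}), $\mathcal R:=R+D_1^\top P^1D_1+D_2^\top P^2D_2\ge\gamma I_m$ is boundedly invertible, BSDE (\ref{BSDE}) is uniquely solvable under Assumptions \ref{assum_system}--\ref{assum_cost}, and $(\varTheta^*,\varLambda^*)\in\mathcal Q[s,T]$. Re-running the computation in the proof of Theorem \ref{optimal proof} verbatim (now legitimate, every pseudoinverse there being an inverse) gives, for any $(\varTheta,\varLambda)\in\mathcal Q[s,T]$ with outcome $u$ and $\widehat x$ solving (\ref{eq6}),
\[
J(s,x_0;u)=\mathbb{E}\int_s^T\big(u-\varTheta^*\widehat x-\varLambda^*\big)^\top\mathcal R\big(u-\varTheta^*\widehat x-\varLambda^*\big)\,dt+\big(\text{terms independent of }(\varTheta,\varLambda)\big);
\]
choosing $(\varTheta,\varLambda)=(\varTheta^*,\varLambda^*)$ annihilates the quadratic term and, since $\mathcal R\ge\gamma I_m$, is the unique minimiser, so $(\varTheta^*,\varLambda^*)$ is the unique optimal closed-loop strategy. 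Finally, let $\widehat x^*$ solve (\ref{eq6}) for $(\varTheta^*,\varLambda^*)$ and set $u^*:=\varTheta^*\widehat x^*+\varLambda^*\in\mathcal U_\mathbb{G}[s,T]$; then $\widehat x^*$ is precisely the filter of the state driven by $u^*$ (same linear equation, same initial datum), so $u^*-\varTheta^*\widehat x^*-\varLambda^*=0$, and the same identity (applied to an arbitrary $u$ and to $u^*$) yields $J(s,x_0;u)-J(s,x_0;u^*)\ge\gamma\,\mathbb{E}\int_s^T|u-\varTheta^*\widehat x-\varLambda^*|^2dt\ge0$, with equality iff $u=u^*$; hence $u^*$ is the unique open-loop optimal control for $(s,x_0)$, the claimed representation (consistent with Proposition \ref{proposition_1}).

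The main obstacle is clearly the first stage, and within it the propagation of the \emph{uniform} bound $\mathcal R_\lambda\ge\gamma_0 I_m$ along the homotopy: continuity alone only delivers $\mathcal R_\lambda>0$ for $\lambda$ near $\Lambda$, and converting this to the uniform constant is exactly where the uniform convexity of the cost (Assumption \ref{assum > gamma u} / Proposition \ref{proposition_1}) has to be used, through the localized closed-loop perturbation described above; the estimate there is delicate because the $W_2$-diffusion in the filter equation makes the $L^2$-norm and the cost of the localized control of the same order in $h$, so one must carry the constant $c_h\ge0$ through carefully rather than treat that contribution as negligible.
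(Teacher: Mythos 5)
Your proposal reaches the right conclusion and its second and third stages (completion of squares with $\mathcal R\ge\gamma\mathbb{I}_m$, then reading off the open-loop control as the closed-loop outcome) coincide with what the paper does, which is to repeat the computation of Theorem \ref{optimal proof} once (\ref{eq288}) is available. The genuine difference is in how you obtain the solution of (\ref{Riccati_2}) together with (\ref{eq288}). The paper proves Lemma \ref{lemma 4} (a spike-variation bound $R+D_1^\top\widetilde P^1D_1+D_2^\top\widetilde P^2D_2\ge\gamma\mathbb{I}_m$ and $\widetilde P^2\ge\theta\mathbb{I}_n$ along the Lyapunov equations associated with an \emph{arbitrary} $\varTheta\in L^2$), rewrites (\ref{Riccati_2}) in the full-information form (\ref{eq36})--(\ref{eq37}), and invokes the first half of Theorem 4.5 of Sun--Li--Yong, whose underlying mechanism is a monotone iteration $\varTheta_i\mapsto P_i$ with $P_i$ nonincreasing and bounded below. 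You instead run a continuation in the data $(Q,S,R,G)$ from the identity case, with an open-and-closed argument in $\lambda$; your "upgrade $\gamma_0/2$ to $\gamma_0$" step via the localized closed-loop perturbation is essentially the same spike-variation computation as Lemma \ref{lemma 4}(ii) (including the correct observation that the $O(h)$ contribution of $\varTheta^*_\lambda\widehat x_h$ must be kept as a nonnegative $c_h$ rather than discarded as $o(h)$), so both routes draw on the same source of positivity, namely Assumption \ref{assum > gamma u}. What each buys: the iteration is constructive and avoids deforming the data, while your homotopy is self-contained (no appeal to the full-information theorem) and cleanly separates the a priori bounds from the existence mechanism. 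Two small points you should spell out to make the continuation airtight: the constant $\theta_0$ of Proposition \ref{proposition_1} depends on the coefficients, so its uniformity over $\lambda\in[0,1]$ needs a word (it follows from the affine dependence on $\lambda$ and compactness of $[0,1]$, and in fact only a uniform \emph{finite} lower bound on $P^2_\lambda$ is needed for equi-Lipschitzness); and in the openness step you should note that the a priori bounds prevent blow-up of $P^2_\lambda$ before $t=0$, so continuous dependence indeed delivers a solution on all of $[0,T]$ for $\lambda$ near $\lambda_0$.
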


Before giving the proof of Theorem \ref{theorem_Riccati_assum4}, we prove the following results.

\begin{lemma}\label{lemma 4}
	Let Assumptions \ref{assum_system}, \ref{assum_cost} hold and $\varTheta(\cdot)\in L^2(0,T;\mathbb{R}^{m\times n})$. Let $\widetilde{P}^1(\cdot), \widetilde{P}^2(\cdot)\in C([0,T];\mathbb{S}^n)$ be the solutions of
	\begin{equation}\label{eq28}
		\begin{cases}
			\begin{split}
				\dot{\widetilde{P}^1}&+\widetilde{P}^1(A+B\varTheta)+(A+B\varTheta)^\top \widetilde{P}^1+(C_1+D_1\varTheta)^\top \widetilde{P}^1(C_1+D_1\varTheta)\\
				&+(C_2+D_2\varTheta)^\top \widetilde{P}^1(C_2+D_2\varTheta)+Q=0,
			\end{split}\\
			P(T)=G,\\
		\end{cases}
	\end{equation}
and
\begin{equation}\label{eq29}
	\begin{cases}
		\begin{split}
			\dot{\widetilde{P}^2}&+\widetilde{P}^2(A+B\varTheta)+(A+B\varTheta)^\top \widetilde{P}^2+(C_1+D_1\varTheta)^\top \widetilde{P}^1(C_1+D_1\varTheta)\\
			&+(C_2+D_2\varTheta)^\top \widetilde{P}^2(C_2+D_2\varTheta)
			+\varTheta^\top R\varTheta+S^\top\varTheta+\varTheta^\top S+Q=0,
		\end{split}\\
		P(T)=G.\\
	\end{cases}
\end{equation}
Then we have

(i) Given any $(s,x_0)\in[0,T)\times\mathbb{R}^n$ and $u(\cdot)\in\mathcal{U}_\mathbb{G}[s,T]$, the following equation holds
\begin{equation}\label{eq30}
	\begin{split}
		&J^0(s,x_0;\varTheta(\cdot)\widehat{x}(\cdot)+u(\cdot))\\
		=\ &x_0^\top \widetilde{P}^2 x_0+\mathbb{E}\int_{s}^{T}\Big\{u^\top\big[R+D_1^\top \widetilde{P}^1 D_1+D_2^\top \widetilde{P}^2D_2\big]u\\
		&+2u^\top \big[B^\top\widetilde{P}^2+D_1^\top\widetilde{P}^1C_1+D_2^\top\widetilde{P}^2C_2+S+(R+D_1^\top\widetilde{P}^1D_1+D_2^\top\widetilde{P}^2D_2)\varTheta\big]\widehat{x}\Big\}dt,\\
	\end{split}
\end{equation}
where $\widehat{x}(\cdot)$ is the solution to 
\begin{equation}\label{eq31}
	\begin{cases}
		\begin{split}
			d\widehat{x}(t)= \,\,&\big[(A+B\varTheta)\widehat{x}(t)+Bu(t)\big]dt+
			\big[(C_2+D_2\varTheta)\widehat{x}(t)+D_2u(t)\big]dW_2(t),\quad t\in[s,T],\\
		\end{split}\\
		\widehat{x}(s)=x_0.
	\end{cases}
\end{equation}

(ii) If, in addition, Assumption \ref{assum > gamma u} holds, then the solution $\widetilde{P}^2(\cdot)$ to (\ref{eq29}) satisfies 
\begin{equation*}
	R(t)+D_1(t)^\top\widetilde{P}^1(t)D_1(t)+D_2(t)^\top\widetilde{P}^2(t)D_2(t)\geq\gamma\mathbb{I}_m,\quad \widetilde{P}^2(t)\geq \theta\mathbb{I}_n,\quad\forall t\in[0,T],
\end{equation*}
where $\theta>0$ and $\gamma>0$ are constants appearing in Proposition \ref{proposition_1} and (\ref{eq23}).
\end{lemma}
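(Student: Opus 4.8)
The plan is to prove part (i) by a direct It\^o's formula computation and part (ii) by combining (i) with the uniform convexity Assumption \ref{assum > gamma u} and the lower bound (\ref{eq22}) from Proposition \ref{proposition_1}. For part (i), I would first observe that when the control applied to system (\ref{system}) (with zero inhomogeneous terms, since we are computing $J^0$) is the outcome $\varTheta(\cdot)\widehat{x}(\cdot)+u(\cdot)$, the corresponding state solves precisely (\ref{eq31}) after taking $\mathbb{G}$-conditional expectations, so $\widehat{x}(\cdot)$ here plays the role of the filtered state and only the $W_2$-martingale part survives (exactly as in the proof of Theorem \ref{optimal proof}). Then I would apply It\^o's formula to $d\big(\widehat{x}(t)^\top \widetilde{P}^1(t)\widehat{x}(t)\big)$ and to $d\big(\widehat{x}(t)^\top(\widetilde{P}^2(t)-\widetilde{P}^1(t))\widehat{x}(t)\big)$, using the ODEs (\ref{eq28}) and (\ref{eq29}) to cancel the drift terms quadratic in $\widehat{x}$. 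Summing the two identities, integrating from $s$ to $T$, taking expectations, and using $\widetilde{P}^1(T)=\widetilde{P}^2(T)=G$ together with the definition of $J^0$ (whose running cost, after substituting the control $\varTheta\widehat{x}+u$, expands into terms $\widehat{x}^\top Q\widehat{x}$, $(\varTheta\widehat{x}+u)^\top R(\varTheta\widehat{x}+u)$, $2\widehat{x}^\top S^\top(\varTheta\widehat{x}+u)$), the terms purely quadratic in $\widehat{x}$ cancel against the ODE coefficients, leaving exactly the right-hand side of (\ref{eq30}): the quadratic form in $u$ with weight $R+D_1^\top\widetilde{P}^1D_1+D_2^\top\widetilde{P}^2D_2$, the cross term with $\widehat{x}$, and the boundary term $x_0^\top\widetilde{P}^2x_0$.

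For part (ii), I would argue as follows. Fix $t_0\in[0,T)$. Applying (\ref{eq30}) at the initial time $t_0$ with $x_0=0$ shows that for every $u(\cdot)\in\mathcal{U}_\mathbb{G}[t_0,T]$,
\begin{equation*}
J^0\big(t_0,0;\varTheta(\cdot)\widehat{x}(\cdot)+u(\cdot)\big)=\mathbb{E}\int_{t_0}^{T}u^\top\big[R+D_1^\top\widetilde{P}^1D_1+D_2^\top\widetilde{P}^2D_2\big]u\,dt,
\end{equation*}
since the cross term vanishes when $x_0=0$ forces $\widehat{x}(\cdot)\equiv 0$ in the homogeneous dynamics driven only by $u$... wait, that is not quite right because $\widehat{x}$ is driven by $u$ as well; instead $\widehat{x}(t_0)=0$ but $\widehat{x}$ need not vanish. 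So more carefully: with $x_0=0$, (\ref{eq30}) gives $J^0(t_0,0;\varTheta\widehat{x}+u)=\mathbb{E}\int_{t_0}^{T}\{u^\top[R+D_1^\top\widetilde{P}^1D_1+D_2^\top\widetilde{P}^2D_2]u+2u^\top[\cdots]\widehat{x}\}dt$. To extract the pointwise bound (\ref{eq288}) I would instead use the estimate (\ref{eq25}) from the proof of Proposition \ref{proposition_1}, namely $J^0(t_0,0;v(\cdot))\ge\gamma\,\mathbb{E}\int_{t_0}^{T}|v(t)|^2dt$ for all $v\in\mathcal{U}_\mathbb{G}[t_0,T]$, applied to $v=\varTheta\widehat{x}+u$, combined with a localization/scaling argument: take $u(\cdot)$ supported on a small interval $[t_0,t_0+h]$ and of the form $u(t)=\xi(t)$ for a fixed (small-time) deterministic test control, let $h\downarrow 0$, divide by $h$, and use the Lebesgue differentiation theorem to pass from the integrated inequality to the a.e.\ pointwise inequality $R(t)+D_1^\top\widetilde{P}^1D_1+D_2^\top\widetilde{P}^2D_2\ge\gamma\mathbb{I}_m$; the lower-order terms involving $\widehat{x}$ are $O(h)$ relative to the $O(h)$ leading terms but with an extra power of $h$, hence drop out. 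For the second bound $\widetilde{P}^2(t)\ge\theta\mathbb{I}_n$, I would apply (\ref{eq30}) with $u\equiv 0$, so the outcome is the closed-loop control $\varTheta(\cdot)\widehat{x}(\cdot)$ with $\widehat{x}$ solving (\ref{eq31}) with $u=0$, giving $J^0(t,x_0;\varTheta\widehat{x})=x_0^\top\widetilde{P}^2(t)x_0$; since this outcome is an admissible control, $J^0(t,x_0;\varTheta\widehat{x})\ge V^0(t,x_0)\ge\theta|x_0|^2$ by (\ref{eq22}), whence $\widetilde{P}^2(t)\ge\theta\mathbb{I}_n$ for all $t$.

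The main obstacle I anticipate is the careful bookkeeping in part (i): one must correctly identify that the process appearing in $J^0$ after the conditional-expectation (filtering) reduction is exactly the $\widehat{x}$ of (\ref{eq31}) (so that the $C_1$-term enters only through $\widetilde{P}^1$, not $\widetilde{P}^2$, matching the asymmetric structure of (\ref{eq28})--(\ref{eq29})), and then verify that every drift term quadratic in $\widehat{x}$ produced by It\^o's formula is annihilated by the ODEs — this is where the specific placement of $(C_1+D_1\varTheta)^\top\widetilde{P}^1(C_1+D_1\varTheta)$ in \emph{both} (\ref{eq28}) and (\ref{eq29}) is essential. A secondary technical point is justifying the localization limit in part (ii) rigorously (continuity of $t\mapsto\widetilde{P}^i(t)$ and boundedness of the coefficients, both available from Assumptions \ref{assum_system}--\ref{assum_cost} and $\widetilde{P}^i(\cdot)\in C([0,T];\mathbb{S}^n)$, make this routine, but it should be stated).
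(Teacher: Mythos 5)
Your part (ii) follows the paper's argument essentially verbatim (localized controls $u=u_0\mathbf{1}_{[s,s+h]}$, divide by $h$, let $h\downarrow 0$ for the first bound; take $u\equiv 0$ in (\ref{eq30}) and invoke $V^0(s,x_0)\ge\theta|x_0|^2$ for the second), and that part is sound. The problem is in part (i), where your computational scheme has a genuine gap.

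You propose to apply It\^o's formula to $d\bigl(\widehat{x}^\top\widetilde{P}^1\widehat{x}\bigr)$ and $d\bigl(\widehat{x}^\top(\widetilde{P}^2-\widetilde{P}^1)\widehat{x}\bigr)$, both with the \emph{filtered} state $\widehat{x}$ of (\ref{eq31}), on the grounds that ``only the $W_2$-martingale part survives.'' This cannot work. The cost $J^0$ is evaluated along the \emph{true} state $x$ of system (\ref{system}) under the control $\varTheta\widehat{x}+u$, and $\mathbb{E}[x^\top Qx]=\mathbb{E}[\widehat{x}^\top Q\widehat{x}]+\mathbb{E}[(x-\widehat{x})^\top Q(x-\widehat{x})]$; the estimation-error contribution, which is driven by $W_1$, does not disappear from the cost. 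Concretely, since $\widehat{x}$ has no $W_1$-martingale component, It\^o's formula applied to $\widehat{x}^\top\widetilde{P}^1\widehat{x}$ produces a quadratic-variation term involving only $(C_2+D_2\varTheta)\widehat{x}+D_2u$; it can never generate the terms $(C_1+D_1\varTheta)^\top\widetilde{P}^1(C_1+D_1\varTheta)$ required to cancel against (\ref{eq28}), nor the weight $D_1^\top\widetilde{P}^1D_1$ that appears in front of $u$ in (\ref{eq30}). Your computation would therefore either be inconsistent with the given Lyapunov equations or produce the wrong control weight $R+D_2^\top\widetilde{P}^2D_2$. The paper's proof avoids this by using \emph{two different processes}: It\^o's formula is applied to $x^\top\widetilde{P}^1x$ with the full (unfiltered) closed-loop state $x$, whose $W_1$-diffusion coefficient $(C_1+D_1\varTheta)x+D_1u$ supplies exactly the missing terms, and only the correction $\widehat{x}^\top(\widetilde{P}^2-\widetilde{P}^1)\widehat{x}$ is computed along the filter. (One then uses $\mathbb{E}[u^\top N x]=\mathbb{E}[u^\top N\widehat{x}]$ for $\mathbb{G}$-adapted $u$ to collapse the cross terms onto $\widehat{x}$.) You flag the ``asymmetric structure'' of (\ref{eq28})--(\ref{eq29}) as the delicate point, which is the right instinct, but the scheme you describe does not implement it; you would need to either switch the first It\^o application to the full state, or explicitly introduce the error process $x-\widehat{x}$ and track its dynamics separately.
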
 

\begin{proof}
	(i) Given any $(s,x_0)\in[0,T)\times\mathbb{R}^n$ and $u(\cdot)\in\mathcal{U}_\mathbb{G}[s,T]$, let $x(\cdot)$  evolve according to
	\begin{equation*}
		\begin{cases}
			\begin{split}
				dx(t)= \,\,&\big[(A+B\varTheta)x(t)+Bu(t)\big]dt+\big[(C_1+D_1\varTheta)x(t)+D_1u(t)\big]dW_1(t)\\
				&+\big[(C_2+D_2\varTheta)x(t)+D_2u(t)\big]dW_2(t),\quad t\in[s,T],\\
			\end{split}\\
			x(s)=x_0.
		\end{cases}
	\end{equation*}

Applying It\^o's formula to $d(x(t)^\top \widetilde{P}^1(t)x(t))$ and $d(\widehat{x}(t)^\top \big(\widetilde{P}^2(t)-\widetilde{P}^1(t)\big)\widehat{x}(t))$, we derive
\begin{equation*}
	\begin{split}
		&J^0(s,x_0;\varTheta(\cdot)\widehat{x}(\cdot)+u(\cdot))\\
		=\ &\mathbb{E}\int_{s}^{T}\Big\{x^\top\big[\dot{\widetilde{P}^1}+\widetilde{P}^1(A+B\varTheta)+(A+B\varTheta)^\top \widetilde{P}^1+(C_1+D_1\varTheta)^\top \widetilde{P}^1(C_1+D_1\varTheta)\\
		&+(C_2+D_2\varTheta)^\top \widetilde{P}^1(C_2+D_2\varTheta)+Q\big]x+u^\top\big[R+D_1^\top \widetilde{P}^1 D_1+D_2^\top \widetilde{P}^2D_2\big]u\\
		&+2u^\top \big[B^\top\widetilde{P}^2+D_1^\top\widetilde{P}^1C_1+D_2^\top\widetilde{P}^2C_2+S+(R+D_1^\top\widetilde{P}^1D_1+D_2^\top\widetilde{P}^2D_2)\varTheta\big]\widehat{x}\\
		&+\widehat{x}^\top\big[\dot{\widetilde{P}^2}-\dot{\widetilde{P}^1}+(\widetilde{P}^2-\widetilde{P}^1)(A+B\varTheta)+(A+B\varTheta)^\top (\widetilde{P}^2-\widetilde{P}^1)\\
		&+(C_2+D_2\varTheta)^\top (\widetilde{P}^2-\widetilde{P}^1)(C_2+D_2\varTheta)+\varTheta^\top R\varTheta+S^\top\varTheta+\varTheta^\top S\big]\widehat{x}\Big\}dt+x_0^\top \widetilde{P}^2 x_0\\
		=\ &\mathbb{E}\int_{s}^{T}\Big\{u^\top\big[R+D_1^\top \widetilde{P}^1 D_1+D_2^\top \widetilde{P}^2D_2\big]u
		+2u^\top \big[B^\top\widetilde{P}^2+D_1^\top\widetilde{P}^1C_1+D_2^\top\widetilde{P}^2C_2+S\\
		&+(R+D_1^\top\widetilde{P}^1D_1+D_2^\top\widetilde{P}^2D_2)\varTheta\big]\widehat{x}\Big\}dt+x_0^\top \widetilde{P}^2(s) x_0.\\
	\end{split}
\end{equation*}

(ii) For any $u(\cdot)\in\mathcal{U}_\mathbb{G}[0,T]$, let $\widehat{x}^0(\cdot)$ be the solution of (\ref{eq31}) with $\widehat{x}^0(0)=0$. By virtue of (i) and (\ref{eq23}), we know
\begin{equation}
	\begin{split}
		&J^0(0,0;\varTheta(\cdot)\widehat{x}^0(\cdot)+u(\cdot))\\
		=\ &\mathbb{E}\int_{0}^{T}\Big\{u^\top\big[R+D_1^\top \widetilde{P}^1 D_1+D_2^\top \widetilde{P}^2D_2\big]u\\
		&+2u^\top \big[B^\top\widetilde{P}^2+D_1^\top\widetilde{P}^1C_1+D_2^\top\widetilde{P}^2C_2+S+(R+D_1^\top\widetilde{P}^1D_1+D_2^\top\widetilde{P}^2D_2)\varTheta\big]\widehat{x}^0\Big\}dt\\
		\geq\ &\gamma\mathbb{E}\int_{0}^{T}\big|\varTheta\widehat{x}^0+u\big|^2dt.
	\end{split}
\end{equation}
The above equation means that 
\begin{equation}\label{eq33}
	\begin{split}
		&\mathbb{E}\int_{0}^{T}\Big\{u^\top\big[R+D_1^\top \widetilde{P}^1 D_1+D_2^\top \widetilde{P}^2D_2-\gamma\mathbb{I}_m\big]u\\
		&+2u^\top \big[B^\top\widetilde{P}^2+D_1^\top\widetilde{P}^1C_1+D_2^\top\widetilde{P}^2C_2+S+(R+D_1^\top\widetilde{P}^1D_1+D_2^\top\widetilde{P}^2D_2-\gamma\mathbb{I}_m)\varTheta\big]\widehat{x}^0\Big\}dt\\
		\geq\ &\gamma\mathbb{E}\int_{0}^{T}\big|\varTheta\widehat{x}^0\big|^2dt\geq 0,\quad \forall u(\cdot)\in\mathcal{U}_\mathbb{G}[0,T].
	\end{split}
\end{equation}

Further, for any fixed $u_0\in\mathbb{R}^m$, we adopt $u(t)=u_0\textbf{1}_{[s,s+h]}(t)$ with $0\leq s<s+h\leq T$. Hence 
\begin{equation*}
	\begin{cases}
		d\big(\mathbb{E}[\widehat{x}^0(t)]\big)=\Big\{\big[A+B\varTheta\big]\mathbb{E}[\widehat{x}^0(t)]+Bu_0\textbf{1}_{[s,s+h]}(t)\Big\}dt,\quad t\in[0,T],\\
		\mathbb{E}[\widehat{x}^0(0)]=0.\\
	\end{cases}
\end{equation*}
This implies
\begin{equation}\label{eq35}
	\mathbb{E}[\widehat{x}^0(t)]=\begin{cases}
		0,\quad\quad \quad \quad \quad \quad \quad \quad \quad \quad \quad \quad\,\, t\in[0,s],\\
		\varPi(t)\int_{s}^{t\land (s+h)}\varPi(r)B(r)u_0dr,\quad t\in[s,T],\\
	\end{cases}
\end{equation}
where $\varPi(\cdot)$ is the solution to
\begin{equation*}
	\begin{cases}
		\dot{\varPi}(t)=\big[A(t)+B(t)\varTheta(t)\big]\varPi(t)\\
		\varPi(0)=\mathbb{I}_n.\\
	\end{cases}
\end{equation*}

Combining (\ref{eq33}) with (\ref{eq35}), we derive
\begin{equation*}
	\begin{split}
		&\int_{s}^{s+h}\Big\{2u_0^\top \big[B^\top\widetilde{P}^2+D_1^\top\widetilde{P}^1C_1+D_2^\top\widetilde{P}^2C_2+S+(R+D_1^\top\widetilde{P}^1D_1+D_2^\top\widetilde{P}^2D_2-\gamma\mathbb{I}_m)\varTheta\big]\varPi(t)\int_{s}^{t}\varPi(r)B(r)u_0dr\\
		&+u_0^\top\big[R+D_1^\top \widetilde{P}^1 D_1+D_2^\top \widetilde{P}^2D_2-\gamma\mathbb{I}_m\big]u_0\Big\}dt\geq 0.
	\end{split}
\end{equation*}
Dividing this equation by $h$ and letting $h\rightarrow 0$, we get 
\begin{equation*}
	u_0^\top\big[R(t)+D_1(t)^\top \widetilde{P}^1(t) D_1(t)+D_2(t)^\top \widetilde{P}^2(t)D_2(t)-\gamma\mathbb{I}_m\big]u_0\geq0,\quad a.e. \,t\in[0,T], \forall u_0\in\mathbb{R}^n,
\end{equation*}
which yields the first inequality of (ii).

Moreover, (\ref{eq30}) and Proposition \ref{proposition_1} show that
\begin{equation*}
	\begin{split}
		\theta|x_0|^2\leq\ & V^0(s,x_0)\leq J^0(s,x_0;\varTheta(\cdot)\widehat{x}(\cdot)+u(\cdot))\\
		=\ &x_0^\top \widetilde{P}^2 x_0+\mathbb{E}\int_{s}^{T}\Big\{u^\top\big[R+D_1^\top \widetilde{P}^1 D_1+D_2^\top \widetilde{P}^2D_2\big]u\\
		&+2u^\top \big[B^\top\widetilde{P}^2+D_1^\top\widetilde{P}^1C_1+D_2^\top\widetilde{P}^2C_2+S+(R+D_1^\top\widetilde{P}^1D_1+D_2^\top\widetilde{P}^2D_2)\varTheta\big]\widehat{x}\Big\}dt.\\
	\end{split}
\end{equation*}
By taking $u(\cdot)=0$ in the above equation, we have $x_0^\top \widetilde{P}^2(s) x_0\geq \theta|x_0|^2$, $\forall(s,x_0)\in[0,T]\times\mathbb{R}^n$, which yields the second inequality of (ii). The proof is completed.
\end{proof}

Now we are ready to prove Theorem \ref{theorem_Riccati_assum4}.

\textit{Proof of Theorem \ref{theorem_Riccati_assum4}}. By virtue of Assumptions \ref{assum_system}, \ref{assum_cost} and  $\varTheta(\cdot)\in L^2(0,T;\mathbb{R}^{m\times n})$, it is clear that equation (\ref{eq28}) admits a unique solution $\widetilde{P}^1(\cdot)\in C(0,T;\mathbb{S}^n)$. By defining \begin{equation*}
	\begin{split}
		\mathscr{R}(\cdot)&\triangleq R(\cdot)+D_1(\cdot)^\top \widetilde{P}^1(\cdot) D_1(\cdot),\\
		\mathscr{Q}(\cdot)&\triangleq Q(\cdot)+\big(C_1(\cdot)+D_1(\cdot)\varTheta(\cdot)\big)^\top \widetilde{P}^1(\cdot)\big(C_1(\cdot)+D_1(\cdot)\varTheta(\cdot)\big)-\varTheta(\cdot)^\top D_1(\cdot)^\top \widetilde{P}^1(\cdot) D_1(\cdot) \varTheta(\cdot),\\
	\end{split}
\end{equation*}
then (\ref{eq29}) becomes
\begin{equation}\label{eq36}
	\begin{cases}
		\begin{split}
			\dot{\widetilde{P}^2}&+\widetilde{P}^2(A+B\varTheta)+(A+B\varTheta)^\top \widetilde{P}^2+(C_2+D_2\varTheta)^\top \widetilde{P}^2(C_2+D_2\varTheta)\\
			&+\varTheta^\top \mathscr{R}\varTheta+S^\top\varTheta+\varTheta^\top S+\mathscr{Q}=0,
		\end{split}\\
		P(T)=G.\\
	\end{cases}
\end{equation}
In this case, Lemma \ref{lemma 4} implies
\begin{equation}\label{eq37}
	\mathscr{R}(t)+D_2(t)^\top\widetilde{P}^2(t)D_2(t)\geq\gamma\mathbb{I}_m,\quad \widetilde{P}^2(t)\geq \theta\mathbb{I}_n,\quad\forall t\in[0,T].
\end{equation}
Hence,  (\ref{eq29}) can be transformed into Lyapunov equation (\ref{eq36}) with property (\ref{eq37}), which is the same as the one arising in LQSOC problems with full information in Sun et al. \cite{SunLiYong2016_SLQ}. Then inequality (\ref{eq288})  follows immediately from the proof of the first half of \cite[Theorem 4.5]{SunLiYong2016_SLQ}. Furthermore, with the help of (\ref{eq288}), the rest of this theorem can be proved by following the proof of Theorem \ref{optimal proof}. The proof is thus completed.$\hfill\qedsymbol$

\vspace{0.3cm}

Finally, we present a characterization of the open-loop solvability by virtue of a perturbation approach. To do this, we consider a perturbed version of Problem (P). Given any $\epsilon>0$, consider the LQSOC problem of minimizing a perturbed cost functional
\begin{equation}\label{cost_epsilon}
	\begin{split}
		J_\epsilon(s,x_0;u(\cdot))&\triangleq\ J(s,x_0;u(\cdot))+\epsilon\mathbb{E}\int_{s}^{T}|u(t)|^2dt\\
		&=\mathbb{E}\int_{s }^{T}\left[\left<\begin{pmatrix}
			Q(t) & S(t)^\top\\
			S(t) & R(t)+\epsilon\mathbb{I}_m\\
		\end{pmatrix}\begin{pmatrix}
			x(t)\\
			u(t)\\
		\end{pmatrix},\begin{pmatrix}
			x(t)\\
			u(t)\\
		\end{pmatrix}\right>+2\left<\begin{pmatrix}
			q(t)\\
			\rho(t)\\
		\end{pmatrix},\begin{pmatrix}
			x(t)\\
			u(t)\\
		\end{pmatrix}\right>\right]dt\\
		&+\mathbb{E}\left[\left<G x(T),x(T)\right> +2\left<g ,x(T)\right>\right],
	\end{split}
\end{equation}
with the system being (\ref{system}) and the admissible control set being $\mathcal{U}_\mathbb{G}[s,T]$. We denote this perturbed problem by Problem(P)$_\epsilon$ and the corresponding value function by $V_\epsilon(\cdot,\cdot)$. Similarly, when the nonhomogeneous terms of system  (\ref{system}) and cost functional (\ref{cost_epsilon}) disappear, we denote the cost functional  by $J_\epsilon^0(s,x_0;u(\cdot))$. It is worth stressing that 
\begin{equation*}
	J_\epsilon^0(s,x_0;u(\cdot))=J(s,x_0;u(\cdot))+\epsilon\mathbb{E}\int_{s}^{T}|u(t)|^2dt,
\end{equation*} 
which, under Assumption \ref{ass_cost>0}, satisfies
\begin{equation*}
	J_\epsilon^0(0,0;u(\cdot))\geq\epsilon\mathbb{E}\int_{0}^{T}|u(t)|^2dt,\quad\forall u(\cdot)\in\mathcal{U}_\mathbb{G}[0,T].
\end{equation*}

According to Theorem \ref{theorem_Riccati_assum4}, we have the following lemma.
\begin{lemma}\label{lemma 5}
	Let Assumptions \ref{assum_system}, \ref{assum_cost}, \ref{ass_cost>0} hold. For any $(s,x_0)\in[0,T)\times\mathbb{R}^n$, the optimal control of Problem (P)$_\epsilon$ is 
	\begin{equation*}
		\begin{split}
			u_\epsilon(t)=\varTheta_\epsilon(t)\widehat{x}_\epsilon(t)+\varLambda_\epsilon(t),
		\end{split}
	\end{equation*}
	where 
	\begin{equation}\label{eq_varTheta_varLambda}
		\begin{split}
			\varTheta_\epsilon\triangleq&-(R+\epsilon\mathbb{I}_m+D_1^\top P^1 D_1+D_2^\top P^2_\epsilon D_2)^{-1}(B^\top P^2_\epsilon+D_1^\top P^1 C_1+D_2^\top P^2_\epsilon C_2+S),\\
			\varLambda_\epsilon\triangleq&-(R+\epsilon\mathbb{I}_m+D_1^\top P^1 D_1+D_2^\top P^2_\epsilon D_2)^{-1}(B^\top\alpha_\epsilon+D_2^\top\beta_\epsilon+D_1^\top P^1\sigma_1+D_2^\top P^2_\epsilon\sigma_2+\rho),\\
		\end{split}
	\end{equation} 
$P^1(\cdot)$ is the solution to (\ref{Riccati_1}), $P^2_\epsilon(\cdot)$ is the solution to 
\begin{equation}\label{Riccati_2_epsilon}
	\begin{cases}
		\begin{split}
			\dot{P}^2_\epsilon&+A^\top P^2_\epsilon +P^2_\epsilon A+C_1^\top P^1 C_1+C_2^\top P^2_\epsilon C_2\\
			&-(B^\top P^2_\epsilon+D_1^\top P^1 C_1+D_2^\top P^2_\epsilon C_2+S)^\top(R+\epsilon\mathbb{I}_m+D_1^\top P^1D_1+D_2^\top P^2_\epsilon D_2)^{-1}\\
			&\times(B^\top P^2_\epsilon+D_1^\top P^1C_1+D_2^\top P^2_\epsilon C_2+S)+Q=0,\\	
		\end{split}\\
		P^2_\epsilon(T)=G,\\
	\end{cases}
\end{equation}
$(\alpha_\epsilon(\cdot),\beta_\epsilon(\cdot))$ is the adapted solution of 
\begin{equation}\label{BSDE_epsilon}
	\begin{cases}
		\begin{split}
			d\alpha_\epsilon(t)=&-\Big[\big(A+B\varTheta_\epsilon\big)^\top\alpha_\epsilon(t)
			+\big(C_2+D_2\varTheta_\epsilon\big)^\top\beta_\epsilon(t)
			+\big(C_1+D_1\varTheta_\epsilon\big)^\top P^1\sigma_1\\
			&+\big(C_2+D_2\varTheta_\epsilon\big)^\top P^2_\epsilon\sigma_2
			+\varTheta_\epsilon^\top\rho+P^2_\epsilon b+q\Big]dt+\beta_\epsilon(t) dW_2(t),\\
		\end{split}\\
		\alpha_\epsilon(T)=g,\\
	\end{cases}
\end{equation}
 and $\widehat{x}_\epsilon(\cdot)$ evolves according to
	\begin{equation}\label{eq400}
		\begin{cases}
			\begin{split}
				d\widehat{x}_\epsilon(t)= \big[A\widehat{x}_\epsilon(t)+Bu_\epsilon(t)+b\big]dt
				+\big[C_2\widehat{x}_\epsilon(t)+D_2u_\epsilon(t)+\sigma_2\big]dW_2(t),\quad t\in[s,T],
			\end{split}\\
			\widehat{x}_\epsilon(s)=x_0.\\
		\end{cases}
	\end{equation}
Moreover, the solution $P^2_\epsilon(\cdot)\in C(0,T;\mathbb{S}^n)$ satsifies 
	\begin{equation*}
	R(t)+\epsilon\mathbb{I}_m+D_1(t)P^1(t)D_1(t)+D_2(t)P^2_\epsilon(t)D_2(t)> 0,\quad a.e. \,\,t\in[0,T].
\end{equation*}
\end{lemma}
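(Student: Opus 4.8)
The plan is to observe that Problem (P)$_\epsilon$ is exactly Problem (P) with the running weight $R(\cdot)$ replaced by $R(\cdot)+\epsilon\mathbb{I}_m$: this substitution turns cost functional (\ref{cost}) into (\ref{cost_epsilon}), while system (\ref{system}) and Assumptions \ref{assum_system}, \ref{assum_cost} are left intact, since $R(\cdot)\in L^\infty(0,T;\mathbb{S}^m)$ forces $R(\cdot)+\epsilon\mathbb{I}_m\in L^\infty(0,T;\mathbb{S}^m)$. Thus every conclusion obtained for Problem (P) transfers verbatim to Problem (P)$_\epsilon$ under this substitution, once the needed structural hypothesis is checked.

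First I would verify that Problem (P)$_\epsilon$ satisfies Assumption \ref{assum > gamma u} with constant $\gamma=\epsilon$. This is precisely the remark placed just before the lemma: the homogeneous cost of Problem (P)$_\epsilon$ equals $J_\epsilon^0(0,0;u(\cdot))=J^0(0,0;u(\cdot))+\epsilon\,\mathbb{E}\int_0^T|u(t)|^2dt$, and Assumption \ref{ass_cost>0} gives $J^0(0,0;u(\cdot))\geq0$, whence $J_\epsilon^0(0,0;u(\cdot))\geq\epsilon\,\mathbb{E}\int_0^T|u(t)|^2dt$ for every $u(\cdot)\in\mathcal{U}_\mathbb{G}[0,T]$, which is inequality (\ref{eq23}) with $\gamma=\epsilon$.

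Then I would apply Theorem \ref{theorem_Riccati_assum4} to Problem (P)$_\epsilon$. Since Riccati equation (\ref{Riccati_1}) does not involve $R$, its solution $P^1(\cdot)$ is unchanged; Riccati equation (\ref{Riccati_2}) with $R$ replaced by $R+\epsilon\mathbb{I}_m$ is exactly (\ref{Riccati_2_epsilon}), so Theorem \ref{theorem_Riccati_assum4} provides its unique solution $P^2_\epsilon(\cdot)\in C(0,T;\mathbb{S}^n)$ together with the bound (\ref{eq288}), which here reads $R(t)+\epsilon\mathbb{I}_m+D_1(t)^\top P^1(t)D_1(t)+D_2(t)^\top P^2_\epsilon(t)D_2(t)\geq\epsilon\mathbb{I}_m>0$ a.e. on $[0,T]$; this is the last assertion of the lemma, and in particular it turns every Moore-Penrose pseudoinverse into a genuine inverse. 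Consequently the feedback gains $\varTheta^*,\varLambda^*$ of Theorem \ref{theorem_Riccati_assum4}, the BSDE (\ref{BSDE}), and the closed-loop state equation (\ref{eq6}), all evaluated with $R\to R+\epsilon\mathbb{I}_m$, $P^2\to P^2_\epsilon$ and $(\alpha,\beta)\to(\alpha_\epsilon,\beta_\epsilon)$, reduce respectively to (\ref{eq_varTheta_varLambda}), (\ref{BSDE_epsilon}) and (\ref{eq400}); the optimality of $u_\epsilon(\cdot)=\varTheta_\epsilon(\cdot)\widehat{x}_\epsilon(\cdot)+\varLambda_\epsilon(\cdot)$, and its uniqueness, then follow directly from Theorem \ref{theorem_Riccati_assum4}.

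The only genuine work --- and hence the ``main obstacle'', although it is entirely routine --- is the term-by-term check in the last step that the substituted forms of (\ref{Riccati_2}) and (\ref{BSDE}) indeed coincide with (\ref{Riccati_2_epsilon}) and (\ref{BSDE_epsilon}). For the drift of (\ref{BSDE}) one must recognize, after the substitution, that the block $B\,(R+\epsilon\mathbb{I}_m+D_1^\top P^1D_1+D_2^\top P^2_\epsilon D_2)^{-1}(B^\top P^2_\epsilon+D_1^\top P^1C_1+D_2^\top P^2_\epsilon C_2+S)$ equals $-B\varTheta_\epsilon$, with the analogous identities obtained by replacing $B$ by $C_1,C_2,D_1,D_2$, and that the matrix multiplying $\rho$ is $\varTheta_\epsilon^\top$; all of these rest on the invertibility $R+\epsilon\mathbb{I}_m+D_1^\top P^1D_1+D_2^\top P^2_\epsilon D_2\geq\epsilon\mathbb{I}_m>0$ just established. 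No estimate beyond what Theorem \ref{theorem_Riccati_assum4} already supplies is required.
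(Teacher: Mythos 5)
Your proposal is correct and matches the paper's own (implicit) argument exactly: the paper proves this lemma simply by noting, immediately before the statement, that Assumption \ref{ass_cost>0} gives $J_\epsilon^0(0,0;u(\cdot))\geq\epsilon\,\mathbb{E}\int_0^T|u(t)|^2dt$, i.e.\ Assumption \ref{assum > gamma u} holds for Problem (P)$_\epsilon$ with $\gamma=\epsilon$, and then invoking Theorem \ref{theorem_Riccati_assum4} with $R$ replaced by $R+\epsilon\mathbb{I}_m$. Your additional term-by-term verification that the substituted equations reduce to (\ref{Riccati_2_epsilon}) and (\ref{BSDE_epsilon}) is the same routine check the paper leaves to the reader.
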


Based on the above lemma, we now study the open-loop solvability of Problem (P) by means of the perturbed control $u_\epsilon(\cdot)$ constructed in Lemma \ref{lemma 5}.

\begin{theorem}\label{theorem_4}
	Assume that Assumptions \ref{assum_system}, \ref{assum_cost}, \ref{ass_cost>0} hold. Given $(s,x_0)\in[0,T)\times\mathbb{R}^n$, consider  $\{u_\epsilon(\cdot)\}_{\epsilon>0}$ constructed by Lemma \ref{lemma 5}, then the following three statements are equivalent:
	
(i) Problem (P) is open-loop solvable at $(s,x_0)$.

(ii) The collection $\{u_\epsilon(\cdot)\}_{\epsilon>0}$ is bounded in $L^2_\mathbb{G}(s,T;\mathbb{R}^m)$. That is 
\begin{equation*}
	\sup\limits_{\epsilon>0}\mathbb{E}\int_{s}^{T}|u_\epsilon(t)|^2dt<\infty.
\end{equation*}

(iii) The collection $\{u_\epsilon(\cdot)\}_{\epsilon>0}$ converges strongly in $L^2_\mathbb{G}(s,T;\mathbb{R}^m)$ as $\epsilon\rightarrow 0$.
\end{theorem}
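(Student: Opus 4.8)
The plan is to establish the chain of implications (iii) $\Rightarrow$ (ii) $\Rightarrow$ (i) $\Rightarrow$ (iii), using the explicit structure of the perturbed optimal controls from Lemma \ref{lemma 5} and the variational characterization from Theorem \ref{theorem_openLOOP_FBSDE}. The implication (iii) $\Rightarrow$ (ii) is immediate, since a strongly convergent sequence in a Hilbert space is bounded. For (ii) $\Rightarrow$ (i), I would extract a weakly convergent subsequence $u_{\epsilon_k}(\cdot) \rightharpoonup u^*(\cdot)$ in $L^2_\mathbb{G}(s,T;\mathbb{R}^m)$; then I would argue that $u^*(\cdot)$ is open-loop optimal. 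The key inequality is that, for each $\epsilon$, the perturbed optimal control satisfies $J_\epsilon(s,x_0;u_\epsilon(\cdot)) \le J_\epsilon(s,x_0;u(\cdot))$ for all admissible $u(\cdot)$; writing $J_\epsilon = J + \epsilon\mathbb{E}\int_s^T |u|^2dt$ and using that $J(s,x_0;u_\epsilon(\cdot)) \le J_\epsilon(s,x_0;u_\epsilon(\cdot)) \le J_\epsilon(s,x_0;u(\cdot))$, one passes to the limit along the subsequence. Here I would use that $u(\cdot)\mapsto J(s,x_0;u(\cdot))$ is, under Assumption \ref{ass_cost>0}, convex (hence weakly lower semicontinuous since the state depends affinely and continuously on the control), so $J(s,x_0;u^*(\cdot)) \le \liminf_k J(s,x_0;u_{\epsilon_k}(\cdot)) \le \liminf_k J_{\epsilon_k}(s,x_0;u(\cdot)) = J(s,x_0;u(\cdot))$, where in the last step $\epsilon_k \mathbb{E}\int_s^T|u|^2dt \to 0$. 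This gives optimality of $u^*(\cdot)$.

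For the hard implication (i) $\Rightarrow$ (iii), suppose Problem (P) is open-loop solvable at $(s,x_0)$ with optimal control $u^*(\cdot)$. First I would show that $\{u_\epsilon(\cdot)\}$ is bounded: by optimality of $u_\epsilon$ for Problem (P)$_\epsilon$, $J(s,x_0;u_\epsilon(\cdot)) + \epsilon\mathbb{E}\int_s^T|u_\epsilon|^2dt \le J(s,x_0;u^*(\cdot)) + \epsilon\mathbb{E}\int_s^T|u^*|^2dt$, and combining this with $J(s,x_0;u_\epsilon(\cdot)) \ge J(s,x_0;u^*(\cdot)) = V(s,x_0)$ yields $\epsilon\mathbb{E}\int_s^T|u_\epsilon|^2dt \le \epsilon\mathbb{E}\int_s^T|u^*|^2dt$, i.e. $\mathbb{E}\int_s^T|u_\epsilon|^2dt \le \mathbb{E}\int_s^T|u^*|^2dt$ for all $\epsilon$. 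So the family is uniformly bounded. Next, for strong convergence, I would use the parallelogram/uniform-convexity structure: for $\epsilon < \epsilon'$, I would compare $J_{\epsilon'}(s,x_0;u_\epsilon(\cdot))$ and $J_{\epsilon'}(s,x_0;u_{\epsilon'}(\cdot))$ and exploit Assumption \ref{ass_cost>0} applied to the difference. Concretely, using the identity from Theorem \ref{theorem_openLOOP_FBSDE}(i) with the ``midpoint'' decomposition, one gets that $J^0(s,0; u_\epsilon(\cdot) - u_{\epsilon'}(\cdot)) \ge 0$ forces a Cauchy-type estimate once the cross terms are controlled; more cleanly, since $J_{\epsilon}^0$ is $\epsilon$-uniformly convex, the minimizers form a Cauchy net, and letting $\epsilon \to 0$ gives a strong limit $\bar u(\cdot)$. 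Finally, I would identify $\bar u(\cdot)$ with the (necessarily unique, by the variational characterization combined with Assumption \ref{ass_cost>0}) open-loop optimal control and conclude that the whole net, not just a subsequence, converges strongly.

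The main obstacle I anticipate is the quantitative Cauchy estimate for $\{u_\epsilon(\cdot)\}$ needed in (i) $\Rightarrow$ (iii), because Assumption \ref{ass_cost>0} only gives $J^0 \ge 0$, not the uniform convexity of Assumption \ref{assum > gamma u}; thus one cannot directly bound $\|u_\epsilon - u_{\epsilon'}\|^2$ by a difference of costs. The trick will be to use the $\epsilon$-regularization itself: the map $u \mapsto J(s,x_0;u(\cdot)) + \epsilon\mathbb{E}\int_s^T|u|^2dt$ is genuinely $2\epsilon$-uniformly convex, so for fixed $\epsilon$ one controls $\|u_\epsilon - u_{\epsilon'}\|^2$ in terms of $J_\epsilon(s,x_0;u_{\epsilon'}(\cdot)) - J_\epsilon(s,x_0;u_\epsilon(\cdot))$, and then one shows the right-hand side is $o(1)$ as $\epsilon,\epsilon' \to 0$ by squeezing between $V_\epsilon$, $V_{\epsilon'}$, and $V$ and using the monotonicity $\epsilon \mapsto V_\epsilon(s,x_0)$ together with the uniform bound on $\|u_\epsilon\|$ established above. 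I would also need to handle measurability/adaptedness: the weak limit of $\mathbb{G}$-progressively measurable processes stays $\mathbb{G}$-progressively measurable because $L^2_\mathbb{G}(s,T;\mathbb{R}^m)$ is a closed subspace of $L^2([s,T]\times\Omega;\mathbb{R}^m)$, hence weakly closed. Once these estimates are in place, the three equivalences close up and the proof follows the template of \cite[Theorem 5.2]{WangSunYong2019}.
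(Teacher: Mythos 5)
Your implications (iii)$\Rightarrow$(ii), (ii)$\Rightarrow$(i), and the boundedness estimate $\mathbb{E}\int_s^T|u_\epsilon|^2dt\leq\mathbb{E}\int_s^T|u^*|^2dt$ at the start of (i)$\Rightarrow$(iii) coincide with the paper's arguments (the paper records the latter as the two-sided bound $\mathbb{E}\int_s^T|u_\epsilon|^2dt\leq\frac{V_\epsilon(s,x_0)-V(s,x_0)}{\epsilon}\leq\mathbb{E}\int_s^T|u^*|^2dt$, which it reuses later). Where you diverge is the passage from boundedness to strong convergence. The paper does not use any strong-convexity/Cauchy-net argument: it first shows that all weak subsequential limits coincide, by applying the displayed two-sided bound with the optimal control $\frac{1}{2}(u_1^*+u_2^*)$ and invoking the parallelogram law to force $u_1^*=u_2^*$; it then applies the same bound with the (now unique) weak limit $u^*$ to get $\lim_\epsilon\|u_\epsilon\|=\|u^*\|$, and concludes by the standard fact that weak convergence plus convergence of norms implies strong convergence in a Hilbert space. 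This route needs only convexity of $u\mapsto J(s,x_0;u)$ (Assumption 5) and no quantitative modulus.

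Your alternative route has a genuine quantitative gap as written. From $2\epsilon$-uniform convexity of $J_\epsilon$ and minimality of $u_\epsilon$ you get $\epsilon\,\mathbb{E}\int_s^T|u_{\epsilon'}-u_\epsilon|^2dt\leq J_\epsilon(s,x_0;u_{\epsilon'})-V_\epsilon(s,x_0)$, so to conclude you need the right-hand side to be $o(\epsilon)$, not $o(1)$ as you claim. The squeeze you propose only delivers $J_\epsilon(s,x_0;u_{\epsilon'})-V_\epsilon(s,x_0)=V_{\epsilon'}(s,x_0)-V_\epsilon(s,x_0)+(\epsilon-\epsilon')\|u_{\epsilon'}\|^2\leq(\epsilon-\epsilon')\|u^*\|^2=O(\epsilon)$ for $\epsilon'<\epsilon$, which after dividing by $\epsilon$ gives only a bounded quantity, not one tending to zero. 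The argument can be repaired by symmetrizing: adding the analogous inequality with the roles of $\epsilon,\epsilon'$ exchanged yields $(\epsilon+\epsilon')\|u_\epsilon-u_{\epsilon'}\|^2\leq(\epsilon-\epsilon')\bigl(\|u_{\epsilon'}\|^2-\|u_\epsilon\|^2\bigr)$, which shows $\epsilon\mapsto\|u_\epsilon\|^2$ is monotone and bounded, hence convergent, and then the same inequality gives the Cauchy property. That repair (or the paper's parallelogram-law argument) is the missing ingredient; without it, the step ``the minimizers form a Cauchy net'' is asserted rather than proved. Your remark on weak closedness of $L^2_{\mathbb{G}}$ is correct and is implicitly used by the paper as well.
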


The next lemma reveals the relationship between the value functions of Problem (P)$_\epsilon$ and Problem (P), which plays a core role in proving Theorem \ref{theorem_4}.
\begin{lemma}
	Let Assumptions \ref{assum_system}, \ref{assum_cost} hold. Given $(s,x_0)\in[0,T)\times\mathbb{R}^n$, we have
	\begin{equation}\label{eq40}
		\lim\limits_{\epsilon\downarrow 0} V_\epsilon(s,x_0)=V(s,x_0).
	\end{equation}
\end{lemma}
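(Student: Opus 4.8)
The plan is to establish the limit \eqref{eq40} by proving two inequalities: $\limsup_{\epsilon\downarrow 0}V_\epsilon(s,x_0)\le V(s,x_0)$ and $\liminf_{\epsilon\downarrow 0}V_\epsilon(s,x_0)\ge V(s,x_0)$. The first inequality is the easy direction: for \emph{any} fixed admissible $u(\cdot)\in\mathcal{U}_\mathbb{G}[s,T]$ we have, directly from the definition of the perturbed cost functional,
\begin{equation*}
	V_\epsilon(s,x_0)\le J_\epsilon(s,x_0;u(\cdot))=J(s,x_0;u(\cdot))+\epsilon\,\mathbb{E}\int_s^T|u(t)|^2dt.
\end{equation*}
Letting $\epsilon\downarrow 0$ gives $\limsup_{\epsilon\downarrow 0}V_\epsilon(s,x_0)\le J(s,x_0;u(\cdot))$, and then taking the infimum over $u(\cdot)$ yields $\limsup_{\epsilon\downarrow 0}V_\epsilon(s,x_0)\le V(s,x_0)$. (Here we use that $V(s,x_0)$ is finite, which under Assumption \ref{ass_cost>0} follows by the same argument as in the proof of Proposition \ref{proposition_1}, namely that $J(s,x_0;\cdot)$ is bounded below by a quadratic in $x_0$ via Theorem \ref{theorem_openLOOP_FBSDE}.)

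For the reverse inequality, note first that $J_\epsilon(s,x_0;u(\cdot))\ge J(s,x_0;u(\cdot))$ for every $u(\cdot)$ since $\epsilon\,\mathbb{E}\int_s^T|u|^2dt\ge 0$; taking the infimum over $u(\cdot)$ on both sides gives $V_\epsilon(s,x_0)\ge V(s,x_0)$ for every $\epsilon>0$. Combining this with the $\limsup$ bound already proved, we get
\begin{equation*}
	V(s,x_0)\le \liminf_{\epsilon\downarrow 0}V_\epsilon(s,x_0)\le \limsup_{\epsilon\downarrow 0}V_\epsilon(s,x_0)\le V(s,x_0),
\end{equation*}
so all the quantities coincide and \eqref{eq40} follows. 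In fact this shows the stronger fact that $V_\epsilon(s,x_0)$ decreases monotonically to $V(s,x_0)$ as $\epsilon\downarrow 0$.

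There is essentially no hard part here — the statement is a soft consequence of the obvious monotonicity $V(s,x_0)\le V_\epsilon(s,x_0)\le J_\epsilon(s,x_0;u(\cdot))$ and the continuity of $\epsilon\mapsto J_\epsilon(s,x_0;u(\cdot))$ at $\epsilon=0$ for each fixed $u(\cdot)$. The only point requiring a word of care is ensuring $V(s,x_0)>-\infty$, so that the $\limsup$ argument is not vacuous; this is where Assumption \ref{ass_cost>0} (through the finiteness estimate in Proposition \ref{proposition_1}) enters. One could alternatively extract more quantitative information — e.g. that $V_\epsilon(s,x_0)-V(s,x_0)=O(\epsilon)$ when Problem (P) is open-loop solvable at $(s,x_0)$, by plugging the open-loop optimal control into the $\limsup$ estimate — but for the lemma as stated the two-sided squeeze above suffices.
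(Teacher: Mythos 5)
Your proof is correct and follows essentially the same two-sided squeeze as the paper: the bound $V_\epsilon\ge V$ from $J_\epsilon\ge J$, and the upper bound by plugging a (near-)optimal control into $J_\epsilon$ and letting $\epsilon\downarrow 0$. One small remark: your parenthetical appeal to Assumption \ref{ass_cost>0} for finiteness of $V(s,x_0)$ is not needed (and is not among the lemma's hypotheses) — your own $\limsup$ argument already handles $V(s,x_0)=-\infty$, since $\limsup_{\epsilon\downarrow 0}V_\epsilon(s,x_0)\le J(s,x_0;u(\cdot))$ for every $u(\cdot)$ forces $V_\epsilon(s,x_0)\to-\infty$ in that case, exactly as the paper notes.
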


\begin{proof}
	Given any fixed $(s,x_0)\in[0,T)\times\mathbb{R}^n$, any $\epsilon>0$ and any $u(\cdot)\in\mathcal{U}_\mathbb{G}[s,T]$, we know
\begin{equation*}
	\begin{split}
		J_\epsilon(s,x_0;u(\cdot))\triangleq\ J(s,x_0;u(\cdot))+\epsilon\mathbb{E}\int_{s}^{T}|u(t)|^2dt\geq J(s,x_0;u(\cdot))\geq V(s,x_0).
	\end{split}
\end{equation*}
This implies
\begin{equation}\label{eq41}
	 V_\epsilon(s,x_0)\geq V(s,x_0).
\end{equation}

Moreover, if $V(s,x_0)$ is finite, then for any $\delta>0$, there exists a $u^\delta(\cdot)\in\mathcal{U}_\mathbb{G}[s,T]$ such that $J(s,x_0;u^\delta(\cdot))\leq V(s,x_0)+\delta$. Hence,
\begin{equation*}
	V_\epsilon(s,x_0)\leq J(s,x_0;u^\delta(\cdot))+\epsilon\mathbb{E}\int_{s}^{T}|u^\delta(t)|^2dt\leq V(s,x_0)+\delta+\epsilon\mathbb{E}\int_{s}^{T}|u^\delta(t)|^2dt.
\end{equation*}
Taking $\epsilon\rightarrow 0$, we know
\begin{equation}\label{eq42}
	\lim\limits_{\epsilon\downarrow 0}V_\epsilon(s,x_0)\leq V(s,x_0)+\delta.
\end{equation}
Since $\delta>0$ is independent of $\epsilon$ and is arbitrary, (\ref{eq40}) follows from (\ref{eq41}) and (\ref{eq42}). Similar analysis also works if $V(s,x_0)=-\infty$. The proof is then complete. 
\end{proof}

\textit{Proof of Theorem \ref{theorem_4}}. First, we prove (i) $\Rightarrow$ (ii). Given any $(s,x_0)\in[0,T)\times\mathbb{R}^n$ and any $\epsilon>0$, if Problem (P) admits an open-loop optimal control $u^*(\cdot)$, then
\begin{equation}\label{eq43}
	\begin{split}
			V_\epsilon(s,x_0)\leq J_\epsilon(s,x_0;u^*(\cdot))=J(s,x_0;u^*(\cdot))+\epsilon\mathbb{E}\int_{s}^{T}|u^*(t)|^2dt= V(s,x_0)+\epsilon\mathbb{E}\int_{s}^{T}|u^*(t)|^2dt.
	\end{split}
\end{equation}
On the other hand, we know
\begin{equation}\label{eq44}
	\begin{split}
		V_\epsilon(s,x_0)= J_\epsilon(s,x_0;u_\epsilon(\cdot))=J(s,x_0;u_\epsilon(\cdot))+\epsilon\mathbb{E}\int_{s}^{T}|u_\epsilon(t)|^2dt\geq V(s,x_0)+\epsilon\mathbb{E}\int_{s}^{T}|u_\epsilon(t)|^2dt.
	\end{split}
\end{equation}
Combining (\ref{eq43}) with (\ref{eq44}), we derive
\begin{equation}\label{eq45}
	\mathbb{E}\int_{s}^{T}|u_\epsilon(t)|^2dt\leq\frac{V_\epsilon(s,x_0)-V(s,x_0)}{\epsilon}\leq\mathbb{E}\int_{s}^{T}|u^*(t)|^2dt,
\end{equation}
which yields the boundedness of $\{u_\epsilon(\cdot)\}_{\epsilon>0}$ in $L^2_\mathbb{G}(s,T;\mathbb{R}^m)$.

Next, we prove (ii) $\Rightarrow$ (i). If $\{u_\epsilon(\cdot)\}_{\epsilon>0}\subseteq L^2_\mathbb{G}(s,T;\mathbb{R}^m)$ is bounded, then there exists a sequence $\{\epsilon_i\}_{i=1}^\infty$ with $\lim_{i\rightarrow\infty}\epsilon_i=0$ such that $\{u_{\epsilon_i}(\cdot)\}_{i=1}^\infty$ converges weakly to a control $v^*(\cdot)\in L^2_\mathbb{G}(s,T;\mathbb{R}^m)$. Since the mapping $u(\cdot)\rightarrow J(s,x_0;u(\cdot))$ is continuous and convex, it is sequentially weakly lower semicontinuous. According to the boundedness of $\{u_{\epsilon_i}(\cdot)\}_{i=1}^\infty$ and (\ref{eq40}), we derive 
\begin{equation*}
	\begin{split}
		J(s,x_0;v^*(\cdot))& \leq \liminf\limits_{i\rightarrow\infty}J(s,x_0;u_{\epsilon_i}(\cdot))\\
		&=\liminf\limits_{i\rightarrow\infty}\left[V_\epsilon(s,x_0)-\epsilon_i\mathbb{E}\int_{s}^{T}|u_{\epsilon_i}(t)|^2dt\right]=V(s,x_0).
	\end{split}
\end{equation*}
This implies that $v^*(\cdot)$ is an open-loop optimal control of Problem (P).

Since the implication (iii) $\Rightarrow$ (ii) is trivially true, we finally prove  (ii) $\Rightarrow$ (iii). We prove it by the following two steps.

\textbf{Step 1.} Given any $(s,x_0)\in[0,T)\times\mathbb{R}^n$, the collection $\{u_\epsilon(\cdot)\}_{\epsilon>0}$ converges weakly to an open-loop optimal control of Problem (P) as $\epsilon\rightarrow 0$.

To prove it, we only need to show that any weakly convergent subsequence of $\{u_\epsilon(\cdot)\}_{\epsilon>0}$ has the same weak limit and this limit is an open-loop optimal control of Problem (P). We denote by $u^*_{j}(\cdot), j=1,2$, the weak limits of two different weakly convergent subsequences $\{u_{j,\epsilon_i}(\cdot)\}_{i=1}^\infty, j=1,2$, of $\{u_\epsilon(\cdot)\}_{\epsilon>0}$. Following similar procedures of (ii) $\Rightarrow$ (i), it is evident that $u^*_1(\cdot)$ and $u^*_2(\cdot)$ are optimal controls of Problem (P) for $(s,x_0)$.
Hence, the convexity of $u(\cdot)\rightarrow J(s,x_0,u(\cdot))$ shows
\begin{equation*}
	J\left(s,x_0;\frac{u^*_1(\cdot)+u^*_2(\cdot)}{2}\right)\leq\frac{1}{2}J\left(s,x_0;u^*_1(\cdot)\right)+\frac{1}{2}J\left(s,x_0;u^*_2(\cdot)\right)=V(s,x_0).
\end{equation*}
This implies that $(u^*_1(\cdot)+u^*_2(\cdot))/2$ is also an optimal control.

Then, similar to (\ref{eq45}), we obtain
\begin{equation*}
	\mathbb{E}\int_{s}^{T}|u_{j,\epsilon_i}(t)|^2dt\leq\mathbb{E}\int_{s}^{T}\left|\frac{u^*_1(\cdot)+u^*_2(\cdot)}{2}\right|^2dt,\quad j=1,2.
\end{equation*}
Taking inferior limits yields 
\begin{equation*}
	\mathbb{E}\int_{s}^{T}|u_{j}^*(t)|^2dt\leq\mathbb{E}\int_{s}^{T}\left|\frac{u^*_1(\cdot)+u^*_2(\cdot)}{2}\right|^2dt,\quad j=1,2.
\end{equation*}
By virtue of the above equation, we derive
\begin{equation*}
	2\left(\mathbb{E}\int_{s}^{T}|u_{1}^*(t)|^2dt+\mathbb{E}\int_{s}^{T}|u_{2}^*(t)|^2dt\right)\leq\mathbb{E}\int_{s}^{T}\left|u^*_1(\cdot)+u^*_2(\cdot)\right|^2dt,
\end{equation*}
which, by simple calculations, means
\begin{equation*}
	\mathbb{E}\int_{s}^{T}\left|u^*_1(\cdot)-u^*_2(\cdot)\right|^2dt\leq 0.
\end{equation*}
Therefore, we know $u^*_1(\cdot)=u^*_2(\cdot)$ and the statements in Step 1 holds.

\textbf{Step 2.} The collection $\{u_\epsilon(\cdot)\}_{\epsilon>0}$ converges strongly as $\epsilon\rightarrow 0$.

It follows from Step 1 that the collection $\{u_\epsilon(\cdot)\}_{\epsilon>0}$ converges weakly to an open-loop optimal control $u^*(\cdot)$ of Problem (P) for $(s,x_0)$ as $\epsilon\rightarrow 0$. According to (\ref{eq45}), we know
\begin{equation}\label{eq46}
	\mathbb{E}\int_{s}^{T}|u_\epsilon(t)|^2dt\leq\mathbb{E}\int_{s}^{T}|u^*(t)|^2dt,\quad\forall\epsilon>0.
\end{equation}
Moreover, since $\{u_\epsilon(\cdot)\}_{\epsilon>0}$ converges weakly to $u^*(\cdot)$, we obtain
\begin{equation}\label{eq47}
	\mathbb{E}\int_{s}^{T}|u^*(t)|^2dt\leq\liminf\limits_{\epsilon\rightarrow 0}\mathbb{E}\int_{s}^{T}|u_\epsilon(t)|^2dt,\quad\forall\epsilon>0.
\end{equation}
Combining (\ref{eq46}) with (\ref{eq47}), it is clear that  $\mathbb{E}\int_{s}^{T}|u_\epsilon(t)|^2dt$ has a limit $\mathbb{E}\int_{s}^{T}|u^*(t)|^2dt$. Hence, we derive
\begin{equation*}
	\begin{split}
		&\lim\limits_{\epsilon\rightarrow0}\mathbb{E}\int_{s}^{T}|u_\epsilon(t)-u^*(t)|^2dt\\
		=\ &\lim\limits_{\epsilon\rightarrow0}\left[\mathbb{E}\int_{s}^{T}|u_\epsilon(t)|^2dt+\mathbb{E}\int_{s}^{T}|u^*(t)|^2dt-2\mathbb{E}\int_{s}^{T}u^*(t)^\top u_\epsilon(t) dt\right]=0.
	\end{split}
\end{equation*}
This shows  $\{u_\epsilon(\cdot)\}_{\epsilon>0}$ converges strongly to $u^*(\cdot)$. The proof is thus completed.$\hfill\qedsymbol$

\section{Weak closed-loop solvabilities}\label{sec 4}
This section shows the equivalence between weak closed-loop solvability and open-loop solvability of Problem (P). We will prove that $(\varTheta_\epsilon(\cdot),\varLambda_\epsilon(\cdot))$ established in Lemma \ref{lemma 5} converges locally in $[0,T)$, and its limit pair $(\varTheta_\epsilon^*(\cdot),\varLambda_\epsilon^*(\cdot))$ is a weak closed-loop optimal strategy.

Let us begin with the next lemma, which is related to Problem (P)$^0$.
\begin{lemma}\label{lemma 7}
	Let Assumptions \ref{assum_system}, \ref{assum_cost}, \ref{ass_cost>0} hold. If Problem (P) is open-loop solvable, then so is Problem (P)$^0$.
\end{lemma}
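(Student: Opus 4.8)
The plan is to push the perturbation machinery of Section~\ref{sec 3} through for \emph{both} Problem~(P) and its homogeneous version Problem~(P)$^0$, and to observe that the two families of perturbed optimal controls differ only by an affine term driven by the initial state, so that boundedness of one family forces boundedness of the other.

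First, I would note that Problem~(P)$^0$ is itself an instance of the framework of Section~\ref{sec 3}: its cost functional is exactly $J^0$, so Assumptions~\ref{assum_system}, \ref{assum_cost} and \ref{ass_cost>0} hold for it, and therefore Lemma~\ref{lemma 5} and Theorem~\ref{theorem_4} apply to Problem~(P)$^0$ as well. Denote by $u_\epsilon(\cdot\,;s,x_0)$ the optimal control of Problem~(P)$_\epsilon$ at $(s,x_0)$ furnished by Lemma~\ref{lemma 5}, and by $u^0_\epsilon(\cdot\,;s,x_0)$ the optimal control of the perturbation of Problem~(P)$^0$ at $(s,x_0)$. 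Because Riccati equations (\ref{Riccati_1}) and (\ref{Riccati_2_epsilon}) involve none of the inhomogeneous data $b,\sigma_1,\sigma_2,q,\rho,g$, the feedback gain $\varTheta_\epsilon(\cdot)$ produced by Lemma~\ref{lemma 5} is literally the same for the two problems; and in the homogeneous case the BSDE (\ref{BSDE_epsilon}) has the trivial adapted solution $(\alpha_\epsilon,\beta_\epsilon)\equiv(0,0)$, so that $\varLambda_\epsilon\equiv 0$. Hence $u^0_\epsilon(\cdot\,;s,x_0)=\varTheta_\epsilon(\cdot)\widehat{x}^0_\epsilon(\cdot)$, where $\widehat{x}^0_\epsilon$ solves $d\widehat{x}^0_\epsilon=(A+B\varTheta_\epsilon)\widehat{x}^0_\epsilon\,dt+(C_2+D_2\varTheta_\epsilon)\widehat{x}^0_\epsilon\,dW_2$ with $\widehat{x}^0_\epsilon(s)=x_0$.

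Second, rewriting (\ref{eq400}) after substituting $u_\epsilon=\varTheta_\epsilon\widehat{x}_\epsilon+\varLambda_\epsilon$ shows that $\widehat{x}_\epsilon(\cdot\,;s,x_0)$ solves the \emph{linear} SDE $d\widehat{x}_\epsilon=[(A+B\varTheta_\epsilon)\widehat{x}_\epsilon+B\varLambda_\epsilon+b]\,dt+[(C_2+D_2\varTheta_\epsilon)\widehat{x}_\epsilon+D_2\varLambda_\epsilon+\sigma_2]\,dW_2$, whose coefficients and forcing terms do not depend on $x_0$; so by uniqueness $\widehat{x}_\epsilon(\cdot\,;s,x_0)=\widehat{x}_\epsilon(\cdot\,;s,0)+\widehat{x}^0_\epsilon(\cdot\,;s,x_0)$. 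Multiplying by $\varTheta_\epsilon$ and using the identity above yields the key relation $u^0_\epsilon(\cdot\,;s,x_0)=u_\epsilon(\cdot\,;s,x_0)-u_\epsilon(\cdot\,;s,0)$.

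Finally, fix any $(s,x_0)\in[0,T)\times\mathbb{R}^n$. Since Problem~(P) is open-loop solvable, it is open-loop solvable at $(s,x_0)$ and at $(s,0)$, so the implication (i)$\Rightarrow$(ii) of Theorem~\ref{theorem_4} makes both $\{u_\epsilon(\cdot\,;s,x_0)\}_{\epsilon>0}$ and $\{u_\epsilon(\cdot\,;s,0)\}_{\epsilon>0}$ bounded in $L^2_\mathbb{G}(s,T;\mathbb{R}^m)$, hence so is $\{u^0_\epsilon(\cdot\,;s,x_0)\}_{\epsilon>0}$ by the key relation. Applying the implication (ii)$\Rightarrow$(i) of Theorem~\ref{theorem_4} to Problem~(P)$^0$ gives open-loop solvability of Problem~(P)$^0$ at $(s,x_0)$, and since $(s,x_0)$ was arbitrary, Problem~(P)$^0$ is open-loop solvable. (Alternatively, one could argue directly from Theorem~\ref{theorem_openLOOP_FBSDE}: the convexity condition $J^0(s,0;\nu)\ge 0$ is automatic from Assumption~\ref{ass_cost>0}, and the stationarity condition $\mathcal{D}J^0(s,x_0;\cdot)=0$ is met by $u^*-\widehat{u}$, where $u^*$ and $\widehat{u}$ are open-loop optimal controls of Problem~(P) at $(s,x_0)$ and $(s,0)$.) The only delicate point is the bookkeeping in the first two steps—verifying that $\varTheta_\epsilon$ is genuinely common to the two perturbed problems and that $\varLambda_\epsilon$ vanishes in the homogeneous one—after which everything reduces to quoting Theorem~\ref{theorem_4}.
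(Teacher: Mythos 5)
Your proposal is correct and follows essentially the same route as the paper: both arguments observe that $\varTheta_\epsilon$ is common to the two perturbed problems and $\varLambda_\epsilon\equiv 0$ in the homogeneous one, derive the key relation $u^0_\epsilon(\cdot\,;s,x_0)=u_\epsilon(\cdot\,;s,x_0)-u_\epsilon(\cdot\,;s,0)$ from linearity and uniqueness of the closed-loop SDE, and then conclude via the boundedness criterion (i)$\Leftrightarrow$(ii) of Theorem~\ref{theorem_4}. Your parenthetical alternative through Theorem~\ref{theorem_openLOOP_FBSDE} (exhibiting $u^*-\widehat{u}$ as a stationary point of $J^0$ by linearity of the FBSDE) is a valid shortcut not taken in the paper, but the main argument is the paper's.
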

\begin{proof}
	Given $(s,x_0)\in[0,T)\times\mathbb{R}^n$, when all nonhomogeneous terms in system (\ref{system}) and cost functional (\ref{cost_epsilon}) disappear, then BSDE (\ref{BSDE_epsilon}) admits an adapted solution $(\alpha_\epsilon(\cdot),\beta_\epsilon(\cdot))=(0,0)$ and thus $\varLambda_\epsilon(\cdot)=0$. In order to prove the open-loop solvability of Problem (P)$^0$,  Theorem \ref{theorem_4} implies that we can verify the boundedness of $\{u_\epsilon(\cdot)\}_{\epsilon>0}$ in $L^2_\mathbb{G}(s,T;\mathbb{R}^m)$ with $u_\epsilon(\cdot)=\varTheta_\epsilon(\cdot)\widehat{x}(\cdot)$ and $\widehat{x}_\epsilon(\cdot)$ being the solution to
\begin{equation}\label{eq50}
	\begin{cases}
		\begin{split}
			d\widehat{x}_\epsilon(t)= \big[(A+B\varTheta_\epsilon)\widehat{x}_\epsilon(t)\big]dt
			+\big[(C_2+D_2\varTheta_\epsilon)\widehat{x}_\epsilon(t)\big]dW_2(t),\quad t\in[s,T],
		\end{split}\\
		\widehat{x}_\epsilon(s)=x_0.\\
	\end{cases}
\end{equation}

To do this,  we define $\varLambda_\epsilon$ as that in (\ref{eq_varTheta_varLambda}) and let  $\widehat{x}_\epsilon^{s,x_0}(\cdot)$ and $\widehat{x}_\epsilon^{s,0}(\cdot)$ be the solutions of (\ref{eq400}) with initial pairs $(s,x_0)$ and $(s,0)$. Since Problem (P) is open-loop solvable at $(s,x_0)$ and $(s,0)$, it follows from Theorem \ref{theorem_4} that the following collections
\begin{equation*}
	u_\epsilon^{s,x_0}(\cdot)\triangleq\varTheta_\epsilon(\cdot)\widehat{x}_\epsilon^{s,x_0}(\cdot),\quad u_\epsilon^{s,0}(\cdot)\triangleq\varTheta_\epsilon(\cdot)\widehat{x}_\epsilon^{s,0}(\cdot),
\end{equation*}
are bounded in $L^2_\mathbb{G}(s,T;\mathbb{R}^n)$. Moreover, the definitions of $\widehat{x}_\epsilon^{s,x_0}(\cdot)$ and $\widehat{x}_\epsilon^{s,0}(\cdot)$ show  that $\widehat{x}_\epsilon^{s,x_0}(\cdot)-\widehat{x}_\epsilon^{s,0}(\cdot)$ satisfies (\ref{eq50}). By means of the uniqueness of solutions to SDEs, we obtain $\widehat{x}_\epsilon(\cdot)=\widehat{x}_\epsilon^{s,x_0}(\cdot)-\widehat{x}_\epsilon^{s,0}(\cdot)$ and thus $u_\epsilon(\cdot)=u_\epsilon^{s,x_0}(\cdot)-u_\epsilon^{s,0}(\cdot)$. Combining this with the fact that $\{u_\epsilon^{s,x_0}(\cdot)\}_{\epsilon>0}$ and $\{u_\epsilon^{s,0}(\cdot)\}_{\epsilon>0}$ are bounded in $L^2_\mathbb{G}(s,T;\mathbb{R}^n)$, we know $\{u_\epsilon(\cdot)\}_{\epsilon>0}$ also is bounded  in $L^2_\mathbb{G}(s,T;\mathbb{R}^n)$, which yields the open-loop solvability of Problem (P)$^0$.  
\end{proof}

Next, we show that the collection $\{\varTheta_\epsilon(\cdot)\}_{\epsilon>0}$ constructed in (\ref{eq_varTheta_varLambda}) is locally convergent in $[0,T)$.

\begin{proposition}\label{proposition 2}
	Assume that Assumptions \ref{assum_system}, \ref{assum_cost}, \ref{ass_cost>0} hold and Problem (P)$^0$ is open-loop solvable. Then the collection $\{\varTheta_\epsilon(\cdot)\}_{\epsilon>0}$ constructed in (\ref{eq_varTheta_varLambda}) converges in $L^2(0,T^{'};\mathbb{R}^{m\times n})$ for any $0\leq T^{'}<T$. That means, there is a locally square-integrable deterministic function $\varTheta_\epsilon^*:[0,T)\rightarrow\mathbb{R}^{m\times n}$ such that
	\begin{equation*}
		\lim\limits_{\epsilon\rightarrow 0}\int_{0}^{T^{'}}\Big|\varTheta_\epsilon(t)-\varTheta_\epsilon^*(\cdot)\Big|^2dt=0,\quad\forall\, 0<T^{'}<T.
	\end{equation*}
\end{proposition}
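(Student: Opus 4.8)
The plan is to read off the local convergence of the feedback gains $\varTheta_\epsilon(\cdot)=-\mathcal{R}_\epsilon(\cdot)^{-1}\mathcal{L}_\epsilon(\cdot)$, where I abbreviate $\mathcal{R}_\epsilon\triangleq R+\epsilon\mathbb{I}_m+D_1^\top P^1D_1+D_2^\top P^2_\epsilon D_2$ and $\mathcal{L}_\epsilon\triangleq B^\top P^2_\epsilon+D_1^\top P^1C_1+D_2^\top P^2_\epsilon C_2+S$, from two ingredients: the convergence of the perturbed Riccati solutions $P^2_\epsilon(\cdot)$ of (\ref{Riccati_2_epsilon}), and the strong convergence of the perturbed outcomes supplied by Theorem \ref{theorem_4} applied to Problem (P)$^0$.

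\textbf{Step 1 (monotone limit of $P^2_\epsilon$).} For each $\epsilon>0$, Problem (P)$_\epsilon$ satisfies Assumption \ref{assum > gamma u} with constant $\epsilon$, and a completion of squares with $\varTheta=\varTheta_\epsilon$ (as in the proof of Lemma \ref{lemma 4}) gives the representation $x_0^\top P^2_\epsilon(s)x_0=V^0_\epsilon(s,x_0)$. Since $J^0_{\epsilon'}\le J^0_{\epsilon}$ pointwise for $0<\epsilon'<\epsilon$, one has $V^0_{\epsilon'}\le V^0_{\epsilon}$, hence $P^2_{\epsilon'}(s)\le P^2_{\epsilon}(s)$ for all $s$, so $\{P^2_\epsilon\}$ is nonincreasing as $\epsilon\downarrow0$. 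Because Problem (P)$^0$ is open-loop solvable, $V^0(s,x_0)$ is finite and $V^0_\epsilon(s,x_0)\downarrow V^0(s,x_0)$ by (\ref{eq40}) applied to Problem (P)$^0$; therefore $\{P^2_\epsilon(s)\}$ is bounded, converges pointwise to some symmetric $P^2_*(s)$, and, being trapped between $P^2_{\epsilon_0}(\cdot)$ and $P^2_*(\cdot)$, the family $\{P^2_\epsilon\}$ is uniformly bounded on $[0,T]$. Feeding this into (\ref{Riccati_2_epsilon}) and using the identity $\mathcal{L}_\epsilon^\top\mathcal{R}_\epsilon^{-1}\mathcal{L}_\epsilon=\varTheta_\epsilon^\top\mathcal{R}_\epsilon\varTheta_\epsilon$ yields $\sup_{\epsilon>0}\int_0^{T}\varTheta_\epsilon^\top\mathcal{R}_\epsilon\varTheta_\epsilon\,dt<\infty$ and the uniform boundedness of $\{\mathcal{L}_\epsilon\}$ in $L^\infty(0,T)$. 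Note that $\mathcal{R}_\epsilon\ge\epsilon\mathbb{I}_m$ only, so this does \emph{not} yet control $\int_0^T|\varTheta_\epsilon|^2$; this is precisely why only local convergence can be expected.

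\textbf{Step 2 ($L^2$-boundedness of $\varTheta_\epsilon$ away from $T$ --- the main obstacle).} Fix $0\le T'<T$. By Lemma \ref{lemma 7}, Problem (P)$^0$ is open-loop solvable, so Theorem \ref{theorem_4} gives, for every $x_0\in\mathbb{R}^n$, the strong convergence (hence boundedness) in $L^2_\mathbb{G}(0,T;\mathbb{R}^m)$ of the outcome $u_\epsilon^{0,x_0}(\cdot)=\varTheta_\epsilon(\cdot)\widehat{x}_\epsilon^{0,x_0}(\cdot)$ of $(\varTheta_\epsilon(\cdot),0)$ from $(0,x_0)$, where $\widehat{x}_\epsilon^{0,x_0}$ solves (\ref{eq50}). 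Writing $\widehat{x}_\epsilon^{0,x_0}(t)=\varPhi_\epsilon(t)x_0$ with $\varPhi_\epsilon(\cdot)$ solving $d\varPhi_\epsilon=(A+B\varTheta_\epsilon)\varPhi_\epsilon\,dt+(C_2+D_2\varTheta_\epsilon)\varPhi_\epsilon\,dW_2$, $\varPhi_\epsilon(0)=\mathbb{I}_n$, and letting $x_0$ run over a basis, I get $\sup_{\epsilon>0}\mathbb{E}\int_0^T|\varTheta_\epsilon\varPhi_\epsilon|^2\,dt<\infty$; rewriting that SDE as $d\varPhi_\epsilon=(A\varPhi_\epsilon+B(\varTheta_\epsilon\varPhi_\epsilon))dt+(C_2\varPhi_\epsilon+D_2(\varTheta_\epsilon\varPhi_\epsilon))dW_2$ and applying Gronwall's inequality also gives $\sup_{\epsilon>0}\mathbb{E}\sup_{0\le t\le T}|\varPhi_\epsilon(t)|^2<\infty$. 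The decisive point is then to pass from a bound on $\varTheta_\epsilon\varPhi_\epsilon$ to a bound on $\varTheta_\epsilon$ itself: one must show that the closed-loop fundamental matrix $\varPhi_\epsilon(\cdot)$ --- which is a.s. invertible at each time --- stays \emph{uniformly nondegenerate} on $[0,T']$, e.g. $\sup_{\epsilon>0}\mathbb{E}\int_0^{T'}|\varPhi_\epsilon(t)^{-1}|^2dt<\infty$ (with the bound allowed to blow up as $T'\uparrow T$), which via $\varTheta_\epsilon=(\varTheta_\epsilon\varPhi_\epsilon)\varPhi_\epsilon^{-1}$ forces $\sup_{\epsilon>0}\int_0^{T'}|\varTheta_\epsilon(t)|^2dt<\infty$. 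I expect this nondegeneracy estimate --- obtained by exploiting the strong convergence of the outcomes $u_\epsilon^{0,x}$ for \emph{all} initial states $x$ and adapting the full-information arguments of \cite{SunLiYong2016_SLQ,WangSunYong2019} to the observation filtration $\mathbb{G}$ --- to be where essentially all of the work lies.

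\textbf{Step 3 (identification of the limit and strong convergence).} Granting the bound of Step 2, the $L^1$-control on $\varTheta_\epsilon^\top\mathcal{R}_\epsilon\varTheta_\epsilon$ together with an Arzel\`a--Ascoli argument in (\ref{Riccati_2_epsilon}) upgrades the monotone pointwise convergence of Step 1 to $P^2_\epsilon\to P^2_*$ uniformly on $[0,T']$, with $P^2_*\in C([0,T);\mathbb{S}^n)$; hence $\mathcal{R}_\epsilon\to\mathcal{R}_*\triangleq R+D_1^\top P^1D_1+D_2^\top P^2_*D_2$ and $\mathcal{L}_\epsilon\to\mathcal{L}_*\triangleq B^\top P^2_*+D_1^\top P^1C_1+D_2^\top P^2_*C_2+S$ uniformly on $[0,T']$. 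Extracting a weak limit $\varTheta_{\epsilon_i}\rightharpoonup\varTheta_\epsilon^*$ in $L^2(0,T';\mathbb{R}^{m\times n})$, the relation $\mathcal{R}_\epsilon\varTheta_\epsilon=-\mathcal{L}_\epsilon$ passes to the limit to give $\mathcal{R}_*\varTheta_\epsilon^*=-\mathcal{L}_*$, and passing to the limit in the integral form of (\ref{Riccati_2_epsilon}) identifies the limit as the minimal-norm solution $\varTheta_\epsilon^*(t)=-\mathcal{R}_*(t)^\dagger\mathcal{L}_*(t)$ on $[0,T)$ and shows that $P^2_*(\cdot)$ solves Riccati equation (\ref{Riccati_2}) on $[0,T)$; in particular the weak limit is unique, so the whole family $\{\varTheta_\epsilon\}_{\epsilon>0}$ converges weakly on $[0,T']$ (uniqueness of this weak limit can alternatively be forced, as in Steps 1--2 of the proof of Theorem \ref{theorem_4}, by a convexity/parallelogram argument on averaged feedback outcomes). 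Finally, the energy identity obtained by integrating (\ref{Riccati_2_epsilon}) on $[0,T']$ --- expressing $\int_0^{T'}\varTheta_\epsilon^\top\mathcal{R}_\epsilon\varTheta_\epsilon\,dt$ in terms of $P^2_\epsilon(0)$, $P^2_\epsilon(T')$ and integrals of quantities already known to converge --- gives $\int_0^{T'}\varTheta_\epsilon^\top\mathcal{R}_\epsilon\varTheta_\epsilon\,dt\to\int_0^{T'}(\varTheta_\epsilon^*)^\top\mathcal{R}_*\varTheta_\epsilon^*\,dt$, which combined with the weak convergence promotes everything to strong convergence in $L^2(0,T';\mathbb{R}^{m\times n})$, exactly as the analogous norm convergence was obtained in Step 2 of the proof of Theorem \ref{theorem_4}. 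This produces the locally square-integrable limit $\varTheta_\epsilon^*(\cdot)$ and completes the argument.
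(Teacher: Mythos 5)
There is a genuine gap at the heart of your argument, namely Step 2. You reduce everything to the ``uniform nondegeneracy'' estimate $\sup_{\epsilon>0}\mathbb{E}\int_0^{T'}|\varPhi_\epsilon(t)^{-1}|^2dt<\infty$ for the \emph{random} closed-loop fundamental matrix, and then explicitly defer it (``I expect this \dots to be where essentially all of the work lies''). That estimate is precisely the hard content of the proposition, and it is doubtful as stated: $\varPhi_\epsilon^{-1}$ solves an SDE whose coefficients involve $\varTheta_\epsilon$, which is exactly the quantity whose uniform-in-$\epsilon$ integrability you are trying to establish, so there is a circularity. Moreover, even granting that bound, the passage from $\mathbb{E}\int_0^{T}|\varTheta_\epsilon\varPhi_\epsilon|^2dt<\infty$ and $\mathbb{E}\int_0^{T'}|\varPhi_\epsilon^{-1}|^2dt<\infty$ to $\int_0^{T'}|\varTheta_\epsilon|^2dt<\infty$ via $\varTheta_\epsilon=(\varTheta_\epsilon\varPhi_\epsilon)\varPhi_\epsilon^{-1}$ requires a bound on the \emph{product} $\mathbb{E}\int|\varTheta_\epsilon\varPhi_\epsilon|^2|\varPhi_\epsilon^{-1}|^2dt$, i.e.\ a joint fourth-moment quantity, which does not follow from the two second-moment bounds separately.

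The paper sidesteps all of this with one observation you did not exploit: $\varTheta_\epsilon(\cdot)$ is \emph{deterministic}. Writing $\mathbb{U}_\epsilon=\varTheta_\epsilon\Psi_\epsilon$ (with $\Psi_\epsilon$ the fundamental matrix re-initialized at an arbitrary $s$ with $\Psi_\epsilon(s)=\mathbb{I}_n$), one has $\mathbb{E}[\mathbb{U}_\epsilon(t)]=\varTheta_\epsilon(t)\mathbb{E}[\Psi_\epsilon(t)]$, so only the \emph{deterministic} matrix $\mathbb{E}[\Psi_\epsilon(t)]$ needs to be inverted. Since $\mathbb{E}[\Psi_\epsilon]$ solves a linear ODE driven by $\mathbb{E}[\mathbb{U}_\epsilon]$, and $\{\mathbb{U}_\epsilon\}$ converges strongly in $L^2_{\mathbb{G}}$ by Theorem \ref{theorem_4} applied to Problem (P)$^0$, the family $\mathbb{E}[\Psi_\epsilon]$ converges uniformly and equals $\mathbb{I}_n$ at $t=s$; hence it is uniformly invertible on a short interval $[s,s+\varDelta_s]$, giving $\varTheta_\epsilon=\mathbb{E}[\mathbb{U}_\epsilon]\mathbb{E}[\Psi_\epsilon]^{-1}$ there and a Cauchy estimate for $\{\varTheta_\epsilon\}$ in $L^2(s,s+\varDelta_s)$. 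A compactness argument then covers $[0,T']$ by finitely many such intervals. Your Step 1 (monotone convergence of $P^2_\epsilon$) and Step 3 (weak convergence plus norm convergence) are not needed in this scheme and, as written, Step 3 itself depends on the unproved Step 2 through the uniform convergence of $P^2_\epsilon$ on $[0,T']$. To repair your proof you would either have to supply the inverse-moment estimate (which I do not believe can be done uniformly in $\epsilon$ by the route you sketch) or switch to the expectation trick above.
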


\begin{proof}
	Given any $0\leq T^{'}<T$, we only need to prove that the collection $\{\varTheta_\epsilon(\cdot)\}_{\epsilon>0}$ is Cauchy in $L^2(0,T^{'};\mathbb{R}^{m\times n})$. To do this,  given any fixed $s\in[0,T)$, let $\Psi_\epsilon(\cdot)\in L^2_{\mathbb{G}}(\Omega;C([s,T];\mathbb{R}^{n\times n}))$ be the solution to
	\begin{equation}
		\begin{cases}
			d\Psi_\epsilon(t)=(A+B\varTheta_\epsilon)\Psi_\epsilon(t)dt+(C_2+D_2\varTheta_\epsilon)\Psi_\epsilon(t)dW_2(t),\quad t\in[s,T],\\
			\Psi_\epsilon(s)=\mathbb{I}_n.
		\end{cases}
	\end{equation}
Then, given any initial state $x_0\in\mathbb{R}^n$, the solution to (\ref{eq50}) is $\widehat{x}_\epsilon(\cdot)=\Psi_\epsilon(\cdot)x_0$.
According to the open-loop solvability of Problem (P)$^0$ and Theorem \ref{theorem_4}, the set 
\begin{equation}
u_\epsilon(\cdot)=\varTheta_\epsilon(\cdot)\widehat{x}_\epsilon(\cdot)=\varTheta_\epsilon(\cdot)\Psi_\epsilon(\cdot)x_0, \quad\epsilon>0,
\end{equation}
converges strongly in $L^2_\mathbb{G}(s,T;\mathbb{R}^m)$ as $\epsilon\rightarrow 0$. This means that $\{\varTheta_\epsilon(\cdot)\Psi_\epsilon\}_{\epsilon>0}$ is strongly convergent in $L^2_\mathbb{G}(s,T;\mathbb{R}^{m\times n})$ as $\epsilon\rightarrow 0$. Let $\mathbb{U}_\epsilon(\cdot)\triangleq\varTheta_\epsilon(\cdot)\Psi_\epsilon(\cdot)$ and $\mathbb{U}^*(\cdot)$ be the corresponding  strong limit of $\mathbb{U}_\epsilon(\cdot)$. Then $\mathbb{E}[\Psi_\epsilon(\cdot)]$ satisfies
\begin{equation*}
	\begin{cases}
		d\mathbb{E}[\Psi_\epsilon(t)]=\left\{A\mathbb{E}[\Psi_\epsilon(t)]+B\mathbb{E}[\mathbb{U}_\epsilon(t)]\right\}dt,\quad t\in[s,T],\\
		\mathbb{E}[\Psi_\epsilon(s)]=\mathbb{I}_n,\\
	\end{cases}
\end{equation*}
and the Jensen's inequality shows 
\begin{equation*}
	\int_{s}^{T}\left|E[\mathbb{U}_\epsilon(t)]-E[\mathbb{U}^*_\epsilon(t)]\right|^2dt \leq\mathbb{E}\int_{s}^{T}\left|\mathbb{U}_\epsilon(t)-\mathbb{U}^*_\epsilon(t)\right|^2dt\rightarrow0,\quad\epsilon\rightarrow0.
\end{equation*}
Moreover, it follows from the theory of ODE that  $\mathbb{E}[\Psi_\epsilon(\cdot)]$ converges uniformly to the solution to
\begin{equation*}
	\begin{cases}
		d\mathbb{E}[\Psi^*(t)]=\left\{A\mathbb{E}[\Psi^*(t)]+B\mathbb{E}[\mathbb{U}^*(t)]\right\}dt,\quad t\in[s,T],\\
		\mathbb{E}[\Psi^*(s)]=\mathbb{I}_n.\\
	\end{cases}
\end{equation*}
Hence, since $\mathbb{E}[\Psi^*(s)]=\mathbb{I}_n$, we can find a small enough $\varDelta_s>0$ such that, for small enough $\epsilon>0$,  $\mathbb{E}[\Psi_\epsilon(t)]$, $\forall t\in[s,s+\varDelta_s]$, is invertible and $|\mathbb{E}[\Psi_\epsilon(t)]|\geq \frac{1}{2}$, $\forall t\in[s,s+\varDelta_s]$. 

Next, we prove that $\{\varTheta_\epsilon(\cdot)\}_{\epsilon>0}$ is Cauchy in $L^2(s,s+\varDelta_s;\mathbb{R}^{m\times n})$. In fact, for two small enough $\epsilon_1>0$ and $\epsilon_2>0$, it follows  that
\begin{equation*}
	\begin{split}
		&\int_{s}^{s+\varDelta_s}\Big|\varTheta_{\epsilon_1}(t)-\varTheta_{\epsilon_2}(t)\Big|^2dt\\
		\leq\ &\int_{s}^{s+\varDelta_s}\Big|\mathbb{E}[\mathbb{U}_{\epsilon_1}(t)]\mathbb{E}[\Psi_{\epsilon_1}(t)]^{-1}-\mathbb{E}[\mathbb{U}_{\epsilon_2}(t)]\mathbb{E}[\Psi_{\epsilon_2}(t)]^{-1}\Big|^2dt\\
		\leq\ &2\int_{s}^{s+\varDelta_s}\Big|\mathbb{E}[\mathbb{U}_{\epsilon_1}(t)-\mathbb{U}_{\epsilon_2}(t)]\Big|^2\Big|\mathbb{E}[\Psi_{\epsilon_1}(t)]^{-1}\Big|^2dt\\
		&+2\int_{s}^{s+\varDelta_s}\Big|\mathbb{E}[\mathbb{U}_{\epsilon_2}(t)]\Big|^2\Big|\mathbb{E}[\Psi_{\epsilon_1}(t)]^{-1}-\mathbb{E}[\Psi_{\epsilon_2}(t)]^{-1}\Big|^2dt\\
		=\ &2\int_{s}^{s+\varDelta_s}\Big|\mathbb{E}[\mathbb{U}_{\epsilon_1}(t)-\mathbb{U}_{\epsilon_2}(t)]\Big|^2\Big|\mathbb{E}[\Psi_{\epsilon_1}(t)]^{-1}\Big|^2dt\\
		&+2\int_{s}^{s+\varDelta_s}\Big|\mathbb{E}[\mathbb{U}_{\epsilon_2}(t)]\Big|^2\Big|\mathbb{E}[\Psi_{\epsilon_1}(t)]^{-1}\Big|^2\Big|\mathbb{E}[\Psi_{\epsilon_2}(t)]-\mathbb{E}[\Psi_{\epsilon_1}(t)]\Big|^2\Big|\mathbb{E}[\Psi_{\epsilon_2}(t)]^{-1}\Big|^2dt\\
		\leq\ &8\int_{s}^{s+\varDelta_s}\Big|\mathbb{E}[\mathbb{U}_{\epsilon_1}(t)-\mathbb{U}_{\epsilon_2}(t)]\Big|^2dt\\
		&+32\left(\sup\limits_{s\leq t\leq s+\varDelta_s}\Big|^2\Big|\mathbb{E}[\Psi_{\epsilon_2}(t)]-\mathbb{E}[\Psi_{\epsilon_1}(t)]\Big|^2\right)\int_{s}^{s+\varDelta_s}\Big|\mathbb{E}[\mathbb{U}_{\epsilon_2}(t)]\Big|^2dt. 
	\end{split}
\end{equation*}
Because $\{\mathbb{U}_\epsilon(\cdot)\}_{\epsilon>0}$ is Cauchy in $L^2_\mathbb{G}(s,T;\mathbb{R}^{m\times n})$ and $\mathbb{E}[\Psi_\epsilon(\cdot)]$ converges uniformly on $[s,T]$, the last two terms of the above inequality tends to $0$ as $\epsilon_1$, $\epsilon_2\rightarrow0$. This shows that $\{\varTheta_\epsilon(\cdot)\}_{\epsilon>0}$ is Cauchy in $L^2(s,s+\varDelta_s;\mathbb{R}^{m\times n})$.

Finally, we prove that $\{\varTheta_\epsilon(\cdot)\}_{\epsilon>0}$ is Cauchy in $L^2(0,T^{'};\mathbb{R}^{m\times n})$ by means of a compactness argument.  Given any $s\in[0,T^{'}]$, the above analysis implies that there exists a small enough $\varDelta_s>0$ such that $\{\varTheta_\epsilon(\cdot)\}_{\epsilon>0}$ is Cauchy in $L^2(s,s+\varDelta_s;\mathbb{R}^{m\times n})$. By the fact that $[0,T^{'}]$ is compact, we can find finitely many $s\in[0,T^{'}]$, say, $s_1, s_2,\cdots,s_i$, such that $\{\varTheta_\epsilon(\cdot)\}_{\epsilon>0}$ is Cauchy in $L^2(s_j,s_j+\varDelta_{s_j};\mathbb{R}^{m\times n})$ and $[0,T^{'}]\subseteq \cup_{j=1}^i[s_j,s_j+\varDelta_{s_j}]$. Therefore,  we have 
\begin{equation*}
	\int_{0}^{T^{'}}\Big|\varTheta_{\epsilon_1}(t)-\varTheta_{\epsilon_2}(t)\Big|^2dt\leq\sum_{j=1}^{i}\int_{s_i}^{s_i+\varDelta_{s_i}}\Big|\varTheta_{\epsilon_1}(t)-\varTheta_{\epsilon_2}(t)\Big|^2dt\rightarrow 0,\quad \text{as}\,\, \epsilon_1,\epsilon_2\rightarrow0.
\end{equation*}
This completes the proof.
\end{proof}

Further, we prove that the collection $\{\varLambda_\epsilon(\cdot)\}_{\epsilon>0}$ constructed in (\ref{eq_varTheta_varLambda}) is also locally convergent in $[0,T)$.

\begin{proposition}\label{proposition 3}
	Assume that Assumptions \ref{assum_system}, \ref{assum_cost}, \ref{ass_cost>0} hold and Problem (P) is open-loop solvable. Then the collection $\{\varLambda_\epsilon(\cdot)\}_{\epsilon>0}$ constructed in (\ref{eq_varTheta_varLambda}) converges in $L^2_\mathbb{G}(0,T^{'};\mathbb{R}^{m})$ for any $0\leq T^{'}<T$. That means, there is a locally square-integrable process $\varLambda_\epsilon^*:[0,T)\times\Omega\rightarrow\mathbb{R}^{m}$ such that
	\begin{equation*}
		\lim\limits_{\epsilon\rightarrow 0}\mathbb{E}\int_{0}^{T^{'}}\Big|\varLambda_\epsilon(t)-\varLambda_\epsilon^*(t)\Big|^2dt=0,\quad\forall\, 0<T^{'}<T.
	\end{equation*}
\end{proposition}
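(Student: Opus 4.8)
The plan is to derive the convergence of $\{\varLambda_\epsilon(\cdot)\}_{\epsilon>0}$ from two facts already established: the strong convergence of the perturbed \emph{outcomes} $u_\epsilon(\cdot)$ (Theorem \ref{theorem_4}), and the local convergence of the feedback gains $\{\varTheta_\epsilon(\cdot)\}_{\epsilon>0}$ (Proposition \ref{proposition 2}, which applies here because Lemma \ref{lemma 7} says Problem (P)$^0$ is open-loop solvable whenever Problem (P) is). The starting observation is that $\varLambda_\epsilon(\cdot)$ in (\ref{eq_varTheta_varLambda}) is built out of $P^1(\cdot)$, $P^2_\epsilon(\cdot)$ and the adapted solution $(\alpha_\epsilon(\cdot),\beta_\epsilon(\cdot))$ of BSDE (\ref{BSDE_epsilon}), hence it does not depend on the initial state. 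So I would fix $(s,x_0)=(0,0)$, let $\widehat{x}_\epsilon^{0,0}(\cdot)$ solve (\ref{eq400}) with $s=0$, $x_0=0$, and exploit the identity $\varLambda_\epsilon(t)=u_\epsilon^{0,0}(t)-\varTheta_\epsilon(t)\widehat{x}_\epsilon^{0,0}(t)$, where $u_\epsilon^{0,0}(\cdot)=\varTheta_\epsilon(\cdot)\widehat{x}_\epsilon^{0,0}(\cdot)+\varLambda_\epsilon(\cdot)$ is the optimal control of Problem (P)$_\epsilon$ from Lemma \ref{lemma 5}. It then suffices to show that $\{u_\epsilon^{0,0}(\cdot)\}_{\epsilon>0}$ and $\{\varTheta_\epsilon(\cdot)\widehat{x}_\epsilon^{0,0}(\cdot)\}_{\epsilon>0}$ are both Cauchy in $L^2_\mathbb{G}(0,T^{'};\mathbb{R}^m)$ for each $0\le T^{'}<T$.

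The first is immediate: since Problem (P) is open-loop solvable, Theorem \ref{theorem_4} yields strong convergence of $\{u_\epsilon^{0,0}(\cdot)\}_{\epsilon>0}$ in $L^2_\mathbb{G}(0,T;\mathbb{R}^m)$. For the second, I would first upgrade this to strong convergence of the states: since (\ref{eq400}) is already a linear SDE driven by $u_\epsilon^{0,0}(\cdot)$, the difference $\widehat{x}_{\epsilon_1}^{0,0}(\cdot)-\widehat{x}_{\epsilon_2}^{0,0}(\cdot)$ solves a linear SDE with zero initial datum, no $b,\sigma_2$ terms, and forcing $B(u_{\epsilon_1}^{0,0}-u_{\epsilon_2}^{0,0})$, $D_2(u_{\epsilon_1}^{0,0}-u_{\epsilon_2}^{0,0})$, so the a priori estimate of Lemma \ref{lemma 1} gives $\mathbb{E}\big[\sup_{0\le t\le T}|\widehat{x}_{\epsilon_1}^{0,0}(t)-\widehat{x}_{\epsilon_2}^{0,0}(t)|^2\big]\le L\,\mathbb{E}\int_0^T|u_{\epsilon_1}^{0,0}-u_{\epsilon_2}^{0,0}|^2dt\to 0$. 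In particular $\{\widehat{x}_\epsilon^{0,0}(\cdot)\}_{\epsilon>0}$ is bounded in $L^2_\mathbb{G}(\Omega;C([0,T];\mathbb{R}^n))$, so $M\triangleq\sup_{\epsilon>0}\sup_{0\le t\le T}\mathbb{E}|\widehat{x}_\epsilon^{0,0}(t)|^2<\infty$. Then, fixing $0\le T^{'}<T$ and splitting $\varTheta_{\epsilon_1}\widehat{x}_{\epsilon_1}^{0,0}-\varTheta_{\epsilon_2}\widehat{x}_{\epsilon_2}^{0,0}=(\varTheta_{\epsilon_1}-\varTheta_{\epsilon_2})\widehat{x}_{\epsilon_1}^{0,0}+\varTheta_{\epsilon_2}(\widehat{x}_{\epsilon_1}^{0,0}-\widehat{x}_{\epsilon_2}^{0,0})$ and using that $\varTheta_\epsilon(\cdot)$ is \emph{deterministic}, I would bound the $L^2_\mathbb{G}(0,T^{'})$-norm of the difference by $2M\int_0^{T^{'}}|\varTheta_{\epsilon_1}-\varTheta_{\epsilon_2}|^2dt+2\big(\int_0^{T^{'}}|\varTheta_{\epsilon_2}|^2dt\big)\mathbb{E}\big[\sup_{0\le t\le T}|\widehat{x}_{\epsilon_1}^{0,0}-\widehat{x}_{\epsilon_2}^{0,0}|^2\big]$. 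Proposition \ref{proposition 2} makes the first term vanish and supplies the uniform bound $\sup_{\epsilon>0}\int_0^{T^{'}}|\varTheta_\epsilon|^2dt<\infty$, while the state estimate kills the second term. Hence $\{\varLambda_\epsilon(\cdot)\}_{\epsilon>0}$ is Cauchy in each $L^2_\mathbb{G}(0,T^{'};\mathbb{R}^m)$, and by completeness it converges there; the limits for different $T^{'}$ are obviously consistent, defining a locally square-integrable $\mathbb{G}$-progressively measurable process $\varLambda_\epsilon^*(\cdot)$ with the stated property.

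The main obstacle — and essentially the only delicate point — is the product $\varTheta_\epsilon(\cdot)\widehat{x}_\epsilon^{0,0}(\cdot)$: we control $\{\varTheta_\epsilon(\cdot)\}_{\epsilon>0}$ only in $L^2$ on subintervals $[0,T^{'}]$ with $T^{'}<T$ (it may fail to be square-integrable up to $T$), so the entire estimate must be localized to $[0,T^{'}]$, and the two halves of the product's difference must be controlled by different mechanisms: the $(\varTheta_{\epsilon_1}-\varTheta_{\epsilon_2})\widehat{x}_{\epsilon_1}^{0,0}$ piece through the $L^2(0,T^{'})$-Cauchy property of $\varTheta_\epsilon$ against the \emph{pointwise-uniform} bound $M$ on $\mathbb{E}|\widehat{x}_\epsilon^{0,0}(t)|^2$ — it is here that determinism of $\varTheta_\epsilon$ is essential, since it lets us pull $|\varTheta_{\epsilon_1}-\varTheta_{\epsilon_2}(t)|^2$ out of the expectation and avoid the stronger quantity $\mathbb{E}[\sup_t|\widehat{x}_{\epsilon_1}^{0,0}|^2]$ on this term — and the $\varTheta_{\epsilon_2}(\widehat{x}_{\epsilon_1}^{0,0}-\widehat{x}_{\epsilon_2}^{0,0})$ piece through the $L^2(0,T^{'})$-boundedness of $\varTheta_\epsilon$ against the strong $C([0,T])$-convergence of the states. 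A minor but necessary point is to record that $\varLambda_\epsilon(\cdot)$ is genuinely initial-state-independent, so that evaluating it at $(0,0)$ loses no information; this is read directly from (\ref{eq_varTheta_varLambda}) and (\ref{BSDE_epsilon}).
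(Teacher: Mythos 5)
Your proposal is correct and follows essentially the same route as the paper's proof: strong convergence of $u_\epsilon(\cdot)$ from Theorem \ref{theorem_4}, the state estimate from Lemma \ref{lemma 1}, the Cauchy property of $\varTheta_\epsilon(\cdot)$ on $[0,T']$ via Lemma \ref{lemma 7} and Proposition \ref{proposition 2}, the same two-term splitting of $\varTheta_{\epsilon_1}\widehat{x}_{\epsilon_1}-\varTheta_{\epsilon_2}\widehat{x}_{\epsilon_2}$, and finally $\varLambda_\epsilon=u_\epsilon-\varTheta_\epsilon\widehat{x}_\epsilon$. Your explicit remarks on the initial-state-independence of $\varLambda_\epsilon(\cdot)$ and on where the determinism of $\varTheta_\epsilon(\cdot)$ is used only make more precise steps the paper performs implicitly.
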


\begin{proof}
	Applying Theorem \ref{theorem_4} and taking $s=0$, the collection $\{u_\epsilon(\cdot)\}_{\epsilon>0}$ constructed in Lemma \ref{lemma 5} is Cauchy in $L^2_\mathbb{G}(0,T;\mathbb{R}^{m})$. That is
	\begin{equation*}
		\mathbb{E}\int_{0}^{T}\Big|u_{\epsilon_1}(t)-u_{\epsilon_2}(t)\Big|^2dt\rightarrow0,\quad\text{as}\,\,\epsilon_1,\epsilon_2\rightarrow0.
	\end{equation*}
Then, it follows from Lemma \ref{lemma 1} and the linearity of system (\ref{eq400}) that
\begin{equation}\label{eq53}
	\mathbb{E}\left[\sup\limits_{0\leq t\leq T}\Big|\widehat{x}_{\epsilon_1}(t)-\widehat{x}_{\epsilon_2}(t)\Big|^2\right]\leq L\mathbb{E}\int_{0}^{T}\Big|u_{\epsilon_1}(t)-u_{\epsilon_2}(t)\Big|^2dt\rightarrow0 \quad\text{as}\,\,\epsilon_1,\epsilon_2\rightarrow0.
\end{equation}

By the open-loop solvability of Problem (P), Lemma \ref{lemma 7} and Proposition \ref{proposition 2} show that $\{\varTheta_\epsilon(\cdot)\}_{\epsilon>0}$ is Cauchy in $L^2(0,T^{'};\mathbb{R}^{m\times n})$ for any $0< T^{'}<T$. Combining this with (\ref{eq53}), we derive 
\begin{equation*}
	\begin{split}
		&\mathbb{E}\int_{0}^{T^{'}}\Big|\varTheta_{\epsilon_1}(t)\widehat{x}_{\epsilon_1}(t)-\varTheta_{\epsilon_2}(t)\widehat{x}_{\epsilon_2}(t)\Big|^2dt\\
		\leq\ &2\mathbb{E}\int_{0}^{T^{'}}\Big|\varTheta_{\epsilon_1}(t)-\varTheta_{\epsilon_2}(t)\Big|^2\Big|\widehat{x}_{\epsilon_1}(t)\Big|^2dt+2\mathbb{E}\int_{0}^{T^{'}}\Big|\varTheta_{\epsilon_2}(t)\Big|^2\Big|\widehat{x}_{\epsilon_1}(t)-\widehat{x}_{\epsilon_2}(t)\Big|^2dt\\
		\leq\ &2\mathbb{E}\int_{0}^{T^{'}}\Big|\varTheta_{\epsilon_1}(t)-\varTheta_{\epsilon_2}(t)\Big|^2dt\cdot\mathbb{E}\left[\sup\limits_{0\leq t\leq T^{'}}\Big|\widehat{x}_{\epsilon_1}(t)\Big|^2\right]\\
		&+2\mathbb{E}\int_{0}^{T^{'}}\Big|\varTheta_{\epsilon_2}(t)\Big|^2dt\cdot\mathbb{E}\left[\sup\limits_{0\leq t\leq T^{'}}\Big|\widehat{x}_{\epsilon_1}(t)-\widehat{x}_{\epsilon_2}(t)\Big|^2\right]\\
		\rightarrow& \,\,0\  \text{as}\,\,\epsilon_1,\epsilon_2\rightarrow0.
	\end{split}
\end{equation*}
Hence, we have
\begin{equation*}
	\begin{split}
		&\mathbb{E}\int_{0}^{T^{'}}\Big|\varLambda_{\epsilon_1}(t)-\varLambda_{\epsilon_2}(t)\Big|^2dt\\
		=\
		&\mathbb{E}\int_{0}^{T^{'}}\Big|\big[u_{\epsilon_1}(t)-\varTheta_{\epsilon_1}(t)\widehat{x}_{\epsilon_1}(t)\big]-\big[u_{\epsilon_2}(t)-\varTheta_{\epsilon_2}(t)\widehat{x}_{\epsilon_2}(t)\big]\Big|^2dt\\
		\leq\ &2\mathbb{E}\int_{0}^{T^{'}}\Big|u_{\epsilon_1}(t)-u_{\epsilon_2}(t)\Big|^2dt+2\mathbb{E}\int_{0}^{T^{'}}\Big|\varTheta_{\epsilon_1}(t)\widehat{x}_{\epsilon_1}(t)-\varTheta_{\epsilon_2}(t)\widehat{x}_{\epsilon_2}(t)\big]\Big|^2dt\\
		\rightarrow& \,\,0\  \text{as}\,\,\epsilon_1,\epsilon_2\rightarrow0,
	\end{split}
\end{equation*}
which claims the statements of this proposition.
\end{proof}

Finally, we present the main results of this subsection, which proves the equivalence between weak closed-loop solvability and open-loop solvability of Problem (P).

\begin{theorem}\label{theorem 5}
Let	Assumptions \ref{assum_system}, \ref{assum_cost}, \ref{ass_cost>0} hold. When Problem (P) is open-loop solvable, then the limit pair $(\varTheta_\epsilon^*(\cdot),\varLambda_\epsilon^*(\cdot))$ established in Propositions \ref{proposition 2} and \ref{proposition 3} is a weak closed-loop optimal strategy of Problem (P) on any $[s,T)$. In consequence, the  weak closed-loop  and open-loop solvabilities of Problem (P) are equivalent.
\end{theorem}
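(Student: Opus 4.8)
The plan is to verify directly that the limit pair $(\varTheta_\epsilon^*(\cdot),\varLambda_\epsilon^*(\cdot))$ from Propositions \ref{proposition 2} and \ref{proposition 3} is a weak closed-loop \emph{strategy} whose outcome coincides with the strong limit $u^*(\cdot)$ of the perturbed controls $u_\epsilon(\cdot)$ of Lemma \ref{lemma 5}, and then to read off optimality from the fact that $u^*(\cdot)$ is open-loop optimal; the equivalence will follow by a short comparison argument. Throughout, note that $\varTheta_\epsilon$ and $\varLambda_\epsilon$, being built from Riccati equation (\ref{Riccati_2_epsilon}) and BSDE (\ref{BSDE_epsilon}) on $[0,T]$, do not depend on the initial pair, so it suffices to produce the strategy on $[0,T)$ and then restrict.

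First I would fix $(s,x_0)\in[0,T)\times\mathbb{R}^n$. Since Problem (P) is open-loop solvable, Lemma \ref{lemma 7} shows Problem (P)$^0$ is open-loop solvable as well, so Propositions \ref{proposition 2} and \ref{proposition 3} apply and yield the locally square-integrable limits $\varTheta_\epsilon^*$, $\varLambda_\epsilon^*$. By Theorem \ref{theorem_4}, $\{u_\epsilon(\cdot)\}_{\epsilon>0}$ is bounded in $L^2_\mathbb{G}(s,T;\mathbb{R}^m)$ and converges strongly there to a control $u^*(\cdot)$ which is open-loop optimal at $(s,x_0)$. Feeding this into (the analogue for (\ref{eq400}) of) Lemma \ref{lemma 1} gives a uniform bound $\sup_{\epsilon>0}\mathbb{E}\big[\sup_{s\le t\le T}|\widehat{x}_\epsilon(t)|^2\big]<\infty$, and, applied to the difference equation, gives $\widehat{x}_\epsilon(\cdot)\to\widehat{x}^*(\cdot)$ in $L^2_\mathbb{G}(\Omega;C([s,T];\mathbb{R}^n))$, where $\widehat{x}^*$ solves the state equation (\ref{eq400}) with $u_\epsilon$ replaced by $u^*$.

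The hard part is to show $u^*=\varTheta_\epsilon^*\widehat{x}^*+\varLambda_\epsilon^*$, i.e.\ that the limit of the outcomes equals the outcome of the limit; the obstacle is that $\varTheta_\epsilon\to\varTheta_\epsilon^*$ and $\varLambda_\epsilon\to\varLambda_\epsilon^*$ only \emph{locally} in time, whereas $u_\epsilon\to u^*$ and $\widehat{x}_\epsilon\to\widehat{x}^*$ on all of $[s,T]$. I would fix $T'<T$ and split
$$\varTheta_\epsilon\widehat{x}_\epsilon-\varTheta_\epsilon^*\widehat{x}^*=(\varTheta_\epsilon-\varTheta_\epsilon^*)\widehat{x}_\epsilon+\varTheta_\epsilon^*(\widehat{x}_\epsilon-\widehat{x}^*),$$
and bound the $L^2_\mathbb{G}(s,T';\mathbb{R}^m)$-norm of each summand by Cauchy--Schwarz: the first by $\big(\int_s^{T'}|\varTheta_\epsilon-\varTheta_\epsilon^*|^2\,dt\big)^{1/2}\big(\mathbb{E}\sup_{[s,T]}|\widehat{x}_\epsilon|^2\big)^{1/2}\to0$ by Proposition \ref{proposition 2} and the uniform state bound, the second by $\big(\int_s^{T'}|\varTheta_\epsilon^*|^2\,dt\big)^{1/2}\big(\mathbb{E}\sup_{[s,T]}|\widehat{x}_\epsilon-\widehat{x}^*|^2\big)^{1/2}\to0$. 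Hence $u_\epsilon=\varTheta_\epsilon\widehat{x}_\epsilon+\varLambda_\epsilon\to\varTheta_\epsilon^*\widehat{x}^*+\varLambda_\epsilon^*$ in $L^2_\mathbb{G}(s,T';\mathbb{R}^m)$ for every $T'<T$, while $u_\epsilon\to u^*$ in $L^2_\mathbb{G}(s,T;\mathbb{R}^m)$; comparing the two limits forces $u^*=\varTheta_\epsilon^*\widehat{x}^*+\varLambda_\epsilon^*$ a.e.\ on $[s,T]$. Substituting this back shows $\widehat{x}^*$ solves (\ref{eq6}) with $(\varTheta_\epsilon^*,\varLambda_\epsilon^*)$, and $u^*\in\mathcal{U}_\mathbb{G}[s,T]$ as a strong limit of admissible controls; since the whole argument runs verbatim for every $x_0\in\mathbb{R}^n$, $(\varTheta_\epsilon^*,\varLambda_\epsilon^*)$ is indeed a weak closed-loop strategy on $[s,T]$, with outcome $u^*$.

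Finally, optimality and equivalence. For the chosen $(s,x_0)$ one has $J(s,x_0;u^*)=V(s,x_0)$ because $u^*$ is open-loop optimal, while for any $(\varTheta,\varLambda)\in\mathcal{Q}_w[s,T]$ its outcome lies in $\mathcal{U}_\mathbb{G}[s,T]$, so $J(s,x_0;\varTheta\widehat{x}+\varLambda)\ge V(s,x_0)=J(s,x_0;u^*)$; as $x_0$ and $s\in[0,T)$ are arbitrary, $(\varTheta_\epsilon^*,\varLambda_\epsilon^*)$ is weak closed-loop optimal on every $[s,T)$, which is the implication ``open-loop solvable $\Rightarrow$ weak closed-loop solvable.'' For the converse, if $(\varTheta^*,\varLambda^*)$ is weak closed-loop optimal on $[s,T]$ with outcome $u^*$, then comparing with the admissible pair $(0,u)\in\mathcal{Q}_w[s,T]$, whose outcome is $u$ itself, for arbitrary $u\in\mathcal{U}_\mathbb{G}[s,T]$ gives $J(s,x_0;u^*)\le J(s,x_0;u)$ for all such $u$; hence Problem (P) is open-loop solvable at $(s,x_0)$, and since $(s,x_0)$ is arbitrary, open-loop solvable. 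I expect the limit-passing in the third paragraph — reconciling the local-in-time convergence of $(\varTheta_\epsilon,\varLambda_\epsilon)$ with the global-in-time convergence of $u_\epsilon$ and $\widehat{x}_\epsilon$ — to be the only genuinely delicate point.
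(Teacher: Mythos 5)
Your proposal is correct and follows essentially the same route as the paper: it invokes Theorem \ref{theorem_4} for the strong convergence $u_\epsilon\to u^*$, Lemma \ref{lemma 1} for the state convergence, and Propositions \ref{proposition 2}--\ref{proposition 3} for the local convergence of $(\varTheta_\epsilon,\varLambda_\epsilon)$, then reconciles local and global limits by the same product decomposition to obtain $u^*=\varTheta_\epsilon^*\widehat{x}^*+\varLambda_\epsilon^*$ on each $[s,T']$. Your last two paragraphs merely spell out the optimality and converse steps that the paper dismisses as ``evident'' and ``clear,'' which is a welcome addition rather than a deviation.
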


\begin{proof}
	Given any $(s,x_0)\in[0,T)\times\mathbb{R}^n$, let $\{u_\epsilon(t),s\leq t\leq T\}_{\epsilon>0}$ be the collection obtained in Lemma \ref{lemma 5}. By the open-loop solvability of Problem (P) and Theorem \ref{theorem_4}, we know $\{u_\epsilon(t),s\leq t\leq T\}_{\epsilon>0}$ converges strongly to an open-loop optimal control $\{u^*(t),s\leq t\leq T\}$ of Problem (P). Let $\widehat{x}^*(\cdot)$ be the solution to 
	\begin{equation*}
		\begin{cases}
			\begin{split}
				d\widehat{x}^*(t)= \big[A\widehat{x}^*(t)+Bu^*(t)+b\big]dt
				+\big[C_2\widehat{x}^*(t)+D_2u^*(t)+\sigma_2\big]dW_2(t),\quad t\in[s,T],
			\end{split}\\
			\widehat{x}^*(s)=x_0.\\
		\end{cases}
	\end{equation*}

If we can prove 
\begin{equation}\label{eq54}
	u^*(t)=\varTheta_\epsilon^*(t)\widehat{x}^*(t)+\varLambda_\epsilon^*(t),\quad t\in[s,T),
\end{equation}
then it is evident that $(\varTheta_\epsilon^*(\cdot),\varLambda_\epsilon^*(\cdot))$ is a weak closed-loop optimal strategy of Problem (P) on $[s,T)$. To prove (\ref{eq54}), first we notice that Lemma \ref{lemma 1} implies 
\begin{equation}\label{eq55}
	\mathbb{E}\left[\sup\limits_{0\leq t\leq T}\Big|\widehat{x}_{\epsilon}(t)-\widehat{x}^*(t)\Big|^2\right]\leq L\mathbb{E}\int_{0}^{T}\Big|u_{\epsilon}(t)-u^*(t)\Big|^2dt\rightarrow0 \quad\text{as}\,\,\epsilon\rightarrow0,
\end{equation}
where $\widehat{x}_\epsilon(\cdot)$ evolves according to (\ref{eq400}). 

Furthermore, it follows from Propositions \ref{proposition 2} and \ref{proposition 3} that 
\begin{equation}\label{eq56}
	\begin{split}
		&\lim\limits_{\epsilon\rightarrow 0}\int_{0}^{T^{'}}\Big|\varTheta_\epsilon(t)-\varTheta_\epsilon^*(t)\Big|^2dt=0,\quad\forall\, 0<T^{'}<T,\\
		&\lim\limits_{\epsilon\rightarrow 0}\mathbb{E}\int_{0}^{T^{'}}\Big|\varLambda_\epsilon(t)-\varLambda_\epsilon^*(t)\Big|^2dt=0,\quad\forall\, 0<T^{'}<T.
	\end{split}
\end{equation}
Combining (\ref{eq55}) with (\ref{eq56}), we derive
\begin{equation*}
	\begin{split}
		&\mathbb{E}\int_{0}^{T^{'}}\Big|u_{\epsilon}(t)-\big[\varTheta_\epsilon^*(t)\widehat{x}^*(t)+\varLambda_\epsilon^*(t)\big]\Big|^2dt\\
		=\
		&\mathbb{E}\int_{0}^{T^{'}}\Big|\big[\varTheta_{\epsilon}(t)\widehat{x}_{\epsilon}(t)+\varLambda_\epsilon(t)\big]-\big[\varTheta_\epsilon^*(t)\widehat{x}^*(t)+\varLambda_\epsilon^*(t)\big]\Big|^2dt\\
		\leq\ &2\mathbb{E}\int_{0}^{T^{'}}\Big|\varLambda_\epsilon(t)-\varLambda_\epsilon^*(t)\Big|^2dt+4\mathbb{E}\int_{0}^{T^{'}}\Big|\varTheta_{\epsilon}(t)-\varTheta_{\epsilon}^*(t)\Big|^2\Big|\widehat{x}_\epsilon(t)\Big|^2dt\\
		&+4\mathbb{E}\int_{0}^{T^{'}}\Big|\varTheta_{\epsilon}^*(t)\Big|^2\Big|\widehat{x}_{\epsilon}(t)-\widehat{x}^*(t)\Big|^2dt\\
		\leq\ &2\mathbb{E}\int_{0}^{T^{'}}\Big|\varLambda_\epsilon(t)-\varLambda_\epsilon^*(t)\Big|^2dt+4\mathbb{E}\int_{0}^{T^{'}}\Big|\varTheta_{\epsilon}(t)-\varTheta_{\epsilon}^*(t)\Big|^2dt\cdot\mathbb{E}\left[\sup\limits_{0\leq t\leq T^{'}}\Big|\widehat{x}_{\epsilon}(t)\Big|^2\right]\\
		&+4\mathbb{E}\int_{0}^{T^{'}}\Big|\varTheta_{\epsilon}^*(t)\Big|^2dt\cdot\mathbb{E}\left[\sup\limits_{0\leq t\leq T^{'}}\Big|\widehat{x}_{\epsilon}(t)-\widehat{x}^*(t)\Big|^2\right]\\
		\rightarrow& \,\,0\  \,\,\text{as}\,\,\epsilon\rightarrow0.
	\end{split}
\end{equation*}
Since $\{u_\epsilon(t),s\leq t\leq T\}_{\epsilon>0}$ converges strongly to $\{u^*(t),s\leq t\leq T\}$ in $L^2_\mathbb{G}(s,T;\mathbb{R}^m)$ as $\epsilon\rightarrow0$, (\ref{eq54}) holds true. The above analysis means that the open-loop solvability of Problem (P) implies the weak closed-loop solvability of Problem (P). The reverse argument is clear with the help of Definitions \ref{defnition 1} and \ref{definition 3}. The proof is therefore completed.
\end{proof}

\section{An illustrative example}\label{sec 5}
In this section, we present an example to demonstrate the procedures for obtaining weak closed-loop strategies. This section considers the following one-dimensional system
\begin{equation*}
	\begin{cases}
		\begin{split}
			dx(t)= \,\,&\big[-x(t)+u(t)+b(t)\big]dt+dW_1(t)
			+\sqrt{2}x(t)dW_2(t),\quad t\in[s,1],\\
		\end{split}\\
		x(s)=x_0,
	\end{cases}
\end{equation*}
and cost functional 
\begin{equation*}
	\begin{split}
		J(s,x_0;u(\cdot))=\ &\mathbb{E}\left[ x(T)^\top x(T)\right],
	\end{split}
\end{equation*}
where $b(\cdot)$ and $\sigma_1(\cdot)$ are 
\begin{equation*}
	b(t)=\begin{cases}
		\frac{e^{\sqrt{2}W_2(t)-2t}}{\sqrt{1-s}},\quad t\in[0,1),\\
		0,\quad\quad\quad\quad\quad t=1.
	\end{cases}
\end{equation*}
As stated in \cite[Section 5]{WangSunYong2019}, we know $b(\cdot)\in L^2_{\mathbb{G}}(\Omega;L^1([0,1];\mathbb{R}^1))$.

In the sequel, we prove the open-loop solvability of above problem by applying Theorem \ref{theorem_4}, and,  consequently, by Theorem \ref{theorem 5}, this problem is weak closed-loop solvable. Without loss of generality, we let $s=0$. In this example, Riccati equation (\ref{Riccati_2_epsilon}) reads 
\begin{equation*}
	\begin{cases}
		\dot{P}_\epsilon^2-\frac{\big(P_\epsilon^2(t)\big)^\top P_\epsilon^2(t)}{\epsilon}=0,\\
		P_\epsilon^2(1)=1,
	\end{cases}
\end{equation*}
and its solution is
\begin{equation*}
	P_\epsilon^2(t)=\frac{\epsilon}{\epsilon+1-t},\quad t\in[0,1].
\end{equation*}
Then we know 
\begin{equation}\label{eq58}
	\begin{split}
		\varTheta_\epsilon&\triangleq-(R+\epsilon\mathbb{I}_m+D_1^\top P^1 D_1+D_2^\top P^2_\epsilon D_2)^{-1}(B^\top P^2_\epsilon+D_1^\top P^1 C_1+D_2^\top P^2_\epsilon C_2+S)\\
		&=-\frac{P_\epsilon^2}{\epsilon}=-\frac{1}{\epsilon+1-t},\quad t\in[0,1],
	\end{split}
\end{equation}
and BSDE (\ref{BSDE_epsilon}) becomes
\begin{equation*}
	\begin{cases}
		\begin{split}
			d\alpha_\epsilon(t)=&-\Big[\big(-1+\varTheta_\epsilon\big)^\top\alpha_\epsilon(t)
			+\sqrt{2}\beta_\epsilon(t)+P^2_\epsilon b\Big]dt+\beta_\epsilon(t) dW_2(t),\\
		\end{split}\\
		\alpha_\epsilon(1)=0.\\
	\end{cases}
\end{equation*}
By defining $\eta(t)=\frac{1}{\sqrt{1-t}}$ and using the variation of constants formula for BSDEs, we derive
\begin{equation*}
	\begin{split}	\alpha_\epsilon(t)=&\frac{\epsilon}{\epsilon+1-t}e^{2t-\sqrt{2}W_2(t)}\mathbb{E}\Big[\int_{t}^{1}e^{\sqrt{2}W_2(r)-2r}b(r)dr\Big|\mathcal{G}_t\Big]\\
		=&\frac{\epsilon}{\epsilon+1-t}e^{2t-\sqrt{2}W_2(t)}\mathbb{E}\Big[\int_{t}^{1}e^{2\sqrt{2}W_2(r)-4r}\eta(r)dr\Big|\mathcal{G}_t\Big]\\
		=&\frac{\epsilon}{\epsilon+1-t}e^{\sqrt{2}W_2(t)-2t}\int_{t}^{1}\eta(r)dr.
	\end{split}
\end{equation*}
Let
\begin{equation}\label{eq59}
	\begin{split}
		\varLambda_\epsilon\triangleq&-(R+\epsilon\mathbb{I}_m+D_1^\top P^1 D_1+D_2^\top P^2_\epsilon D_2)^{-1}(B^\top\alpha_\epsilon+D_2^\top\beta_\epsilon+D_1^\top P^1\sigma_1+D_2^\top P^2_\epsilon\sigma_2+\rho)\\
		=&-\frac{\alpha_\epsilon}{\epsilon}=-\frac{1}{\epsilon+1-t}e^{\sqrt{2}W_2(t)-2t}\int_{t}^{1}\eta(r)dr,\quad t\in[0,1].\\
	\end{split}
\end{equation}
Then closed-loop system (\ref{eq400}) becomes
	\begin{equation*}
	\begin{cases}
		\begin{split}
			d\widehat{x}_\epsilon(t)= \big[[\varTheta_\epsilon(t)-1]\widehat{x}_\epsilon(t)+\varLambda_\epsilon+b\big]dt
			+\sqrt{2}\widehat{x}_\epsilon(t)dW_2(t),\quad t\in[0,1],
		\end{split}\\
		\widehat{x}_\epsilon(0)=x_0.\\
	\end{cases}
\end{equation*}
According to the variation of constants formula for SDEs, we obtain
\begin{equation*}\label{eq60}
	\begin{split}
		\widehat{x}_\epsilon(t)=&(\epsilon+1-t)e^{\sqrt{2}W_2(t)-2t}\int_{0}^{t}\frac{1}{\epsilon+1-r}e^{-\sqrt{2}W_2(r)+2r}\big[\varLambda_\epsilon(r)+b(r)\big]dr\\
		&+\frac{\epsilon+1-t}{\epsilon+1}e^{\sqrt{2}W_2(t)-2t}x_0,\quad t\in[0,1].
	\end{split}
\end{equation*}

To show the open-loop solvability of this example at $(0,x_0)$, Theorem \ref{theorem_4} implies that it is enough to prove that the following collection 
\begin{equation}\label{eq61}
	\begin{split}
		u_\epsilon(t)\triangleq\ &\varTheta_\epsilon(t)\widehat{x}_\epsilon(t)+\varLambda_\epsilon(t)\\
		=\ &-e^{\sqrt{2}W_2(t)-2t}\int_{0}^{t}\frac{1}{\epsilon+1-r}e^{-\sqrt{2}W_2(r)+2r}\big[\varLambda_\epsilon(r)+b(r)\big]dr\\
		&-\frac{1}{\epsilon+1}e^{\sqrt{2}W_2(t)-2t}x_0+\varLambda_\epsilon(t),\quad t\in[0,1],
	\end{split}
\end{equation} 
is bounded in $L^2_\mathbb{G}(0,1;\mathbb{R}^1)$. To this end, we analyze some terms in this equation. According to Fubini's theorem, we have
\begin{equation}\label{eq62}
	\begin{split}
		&\int_{0}^{t}\frac{1}{\epsilon+1-r}e^{-\sqrt{2}W_2(r)+2r}\varLambda_\epsilon(r)dr\\
		=\ &-\int_{0}^{t}\frac{1}{(\epsilon+1-r)^2}\int_{r}^{1}\eta(\tau)d\tau dr\\
		=\ &-\int_{0}^{t}\eta(\tau)\int_{0}^{\tau}\frac{1}{(\epsilon+1-r)^2} drd\tau-\int_{t}^{1}\eta(\tau)\int_{0}^{t}\frac{1}{(\epsilon+1-r)^2}drd\tau\\
		=\ &-\int_{0}^{t}\frac{1}{\epsilon+1-\tau}\eta(\tau)d\tau+\frac{1}{\epsilon+1}\int_{0}^{1}\eta(\tau)d\tau-\frac{1}{\epsilon+1-t}\int_{t}^{1}\eta(\tau)d\tau.
	\end{split}
\end{equation}
Moreover, 
\begin{equation}\label{eq63}
	\begin{split}
		\int_{0}^{t}\frac{1}{\epsilon+1-r}e^{-\sqrt{2}W_2(r)+2r}b(r)dr
		=\int_{0}^{t}\frac{1}{\epsilon+1-r}\eta(r)dr.
	\end{split}
\end{equation}
Combining (\ref{eq59}), (\ref{eq61}), (\ref{eq62}) with (\ref{eq63}), we derive
\begin{equation}\label{eq64}
	\begin{split}
		u_\epsilon(t)=-\frac{1}{\epsilon+1}e^{\sqrt{2}W_2(t)-2t}x_0-e^{\sqrt{2}W_2(t)-2t}\frac{1}{\epsilon+1}\int_{0}^{1}\eta(r)dr=-\frac{2+x_0}{\epsilon+1}e^{\sqrt{2}W_2(t)-2t},\quad t\in[0,1].
	\end{split}
\end{equation}
Consequently, we know
\begin{equation*}
	\mathbb{E}\int_{0}^{1}\big|u_\epsilon(t)\big|^2dt=\left(\frac{2+x_0}{\epsilon+1}\right)^2\leq(2+x_0)^2,
\end{equation*}
which shows the boundedness of $\{u_\epsilon(\cdot)\}_{\epsilon>0}$ in $L^2_\mathbb{G}(0,1;\mathbb{R}^1)$. By letting $\epsilon\rightarrow 0$, we obtain an open-loop optimal control of Problem (P), which is 
\begin{equation*}
	\begin{split}
		u^*(t)=-(2+x_0)e^{\sqrt{2}W_2(t)-2t},\quad t\in[0,1].
	\end{split}
\end{equation*}
Last but not the least, by letting $\epsilon\rightarrow 0$ in (\ref{eq58}) and (\ref{eq59}), we get a weak closed-loop optimal strategy
\begin{equation*}
	\begin{split}
		&\varTheta_\epsilon^*(t)=\lim\limits_{\epsilon\rightarrow 0}\varTheta_\epsilon(t)=-\frac{1}{1-t}, \quad t\in[0,1),\\
		&\varLambda_\epsilon^*(t)=\lim\limits_{\epsilon\rightarrow 0}\varLambda_\epsilon(t)=-\frac{1}{1-t}e^{\sqrt{2}W_2(t)-2t}\int_{t}^{1}\eta(r)dr=\frac{-2e^{\sqrt{2}W_2(t)-2t}}{\sqrt{1-t}},\quad t\in[0,1).
	\end{split}
\end{equation*}
In addition, since
\begin{equation*}
	\begin{split}
		&\mathbb{E}\int_{0}^{1}\big|\varTheta_\epsilon^*(t)\big|^2dt=\int_{0}^{1}\frac{1}{(1-t)^2}dt=\infty\\
		&\mathbb{E}\int_{0}^{1}\big|\varLambda_\epsilon^*(t)\big|^2dt=\mathbb{E}\int_{0}^{1}\frac{4e^{2\sqrt{2}W_2(t)-4t}}{\sqrt{1-t}}dt=\int_{0}^{1}\frac{4}{1-s}ds=\infty,
	\end{split}
\end{equation*}
 $\varTheta_\epsilon^*(\cdot)$ and $\varLambda_\epsilon^*(\cdot)$ are not square-integrable on $[0,1)$.
 
\section{Conclusions and  future works}\label{sec 6}
This paper considered an LQSOC problem with partial information. The open-loop, closed-loop and weak closed-loop solvabilities of this problem were introduced and investigated.  The equivalence between open-loop and weak solvabilities was proved, and finally, an example is given to demonstrate the procedures for finding the optimal weak closed-loop strategy. Inspired by the results presented in this paper, there are also some other works worth studying. For example, it is of great interest to investigate open-loop and weak closed-loop solvabilities for LQSOC problems with partial observation. That is, the information available to the agent is not a Brownian motion, but a noisy observation of the state.  To study this issue, we may need to exploit more filtering theories. We hope to make some progress in this direction in the future.

\section*{Acknowledgements}
\noindent  The last author is very grateful for  many useful and inspiring discussions with Ms. Siqi Feng. 

\section*{Statements and Declarations}
\noindent \textbf{Funding} X. Li acknowledges the financial support by the Hong Kong General Research Fund, China, under Grant Nos. 15216720, 15221621 and 15226922. G. Wang acknowledges the financial support from  the National Natural Science Foundation of China under Grant Nos. 61925306, the National Key R\&D Program of China under Grant No. 2022YFA1006103, and the Natural Science Foundation of Shandong Province under Grant No. ZR2019ZD42. J. Xiong acknowledges the financial support from the National Key R\&D Program of China under Grant No. 2022YFA1006102 and the National Natural Science Foundation of China under Grant No. 12471418.

\noindent \textbf{Conflict of interest} The authors declare that they have no conflict of interest.

\end{document}